\newtheorem{proposition}{Proposition}[section]
\newtheorem{theorem}{Theorem}[section]
\newtheorem{lemma}{Lemma}[section]
\theoremstyle{definition}
\newtheorem{definition}{Definition}[section]
\theoremstyle{remark}
\newtheorem{example}{Example}
\newcommand{\R}{\mathbb{R}}
\newcommand{\N}{\mathbb{N}}
\newcommand{\M}{\mathbb{M}}
\newcommand{\MM}{\mathcal{M}}
\newcommand{\UU}{{\mathscr{U}}}
\newcommand{\nn}{{\mbox{\boldmath$n$}}}
\newcommand{\zz}{{\mbox{\boldmath$z$}}}
\newcommand{\vv}{{\mbox{\boldmath$v$}}}
\newcommand{\ww}{{\mbox{\boldmath$w$}}}
\newcommand{\tauV}{{\kern-3pt\tau}}
\newcommand{\nnu}{{\mbox{\boldmath$\nu$}}}
\newcommand{\Leb}[1]{{\mathscr L}^{#1}}      
\newcommand{\Wmod}{\mathcal W}
\renewcommand{\d}{{\mathrm d}}
\newcommand{\dx}{\d x}
\newcommand{\Rd}{{\R^d}}
\newcommand{\loc}{{\mathrm{loc}}}
\newcommand{\CE}{{\mathcal{CE}}}
\newcommand{\eps}{\varepsilon}
\newcommand{\dom}{\mathrm{dom}}
\newcommand{\Int}{\mathrm{int}}
\newcommand{\supp}{\mathrm{supp}}
\newcommand{\h}{\mathrm{h}}
\newcommand{\weakto}{\rightharpoonup}
\newcommand{\mom}{\tilde {\mathrm m}}
\newcommand{\restr}[1]{\lower3pt\hbox{$|_{#1}$}}
\newcommand{\la}{\langle}
\newcommand{\ra}{\rangle}
\newcommand{\AdmissibleDensity}{\mathcal A_p}
\newcommand{\AdmissibleDensitytwo}{\mathcal A_2}
\newcommand{\AdmissibleDensityab}{\mathcal A_p(a,b)}
\title{On a class of modified Wasserstein distances induced by concave mobility functions defined on bounded intervals}
\author{Stefano Lisini\footnote{Address: Dipartimento di Matematica ``F. Casorati'', Universit\`a degli Studi di Pavia, Via Ferrata 1, 27100 Pavia, Italy, e-mail: stefano.lisini@unipv.it},
Antonio Marigonda\footnote{Address: Dipartimento di Informatica, Universit\`a degli Studi di Verona, Strada Le Grazie 15, 37134 Verona, Italy, e-mail: antonio.marigonda@univr.it}}
\date{}
\begin{document}
\maketitle
\begin{abstract}
We study a new class of distances between Radon measures similar to those studied in \cite{DNS}.
These distances (more correctly pseudo-distances because can assume the value $+\infty$)
are defined generalizing the dynamical formulation of the Wasserstein distance
by means of a concave mobility function.
We are mainly interested in the physical interesting case (not considered in \cite{DNS}) of  a
concave mobility function defined in a bounded interval.
We state the basic properties of the space of measures endowed with this pseudo-distance.
Finally, we study in detail two cases: the set of measures defined in $\R^d$ with finite moments
and the set of measures defined in a bounded convex set.
In the two cases we give
sufficient conditions for the convergence of sequences with respect to the distance and
we prove a property of boundedness.
\end{abstract}

\textsc{Keywords: } generalized Wasserstein distance, mobility function.\\

\textbf{2000 Mathematics Subject Classification: }49J27, 49J40.\\

\section{Introduction}
In \cite{DNS}, Dolbeault, Nazaret and Savar\'e introduce and study the basic properties
of a new class of distances between non-negative Radon measures on $\R^d$.
These distances are defined generalizing the dynamical characterization of the Wasserstein distance.
We briefly recall that the Wasserstein distance between two non-negative measures with the same mass can be defined as a relaxed
optimal transportation problem (see \cite{Vil03}, \cite{AGS}, \cite{Vil09} for a reference on this interesting topic)
\begin{equation}\label{def:W}
W_p(\mu_0,\mu_1):=\inf\left\{
  \left(\int_{\Rd\times \Rd}|y-x|^p\,\d\Sigma\right)^{\frac1p}:\
  \Sigma\in\Gamma(\mu_0,\mu_1)
\right\}
\end{equation}
where $\Gamma(\mu_0,\mu_1)$ is the set of all \emph{transport plans} between $\mu_0$ and $\mu_1$:
they are non-negative measures $\Sigma$ on $\R^d\times \R^d$ with the same mass of $\mu_0$ and $\mu_1$ whose first
and second marginals are respectively $\mu_0$ and $\mu_1$, i.e. $\Sigma(B\times \R^d)=\mu_0(B)$
and $\Sigma(\R^d\times B)=\mu_1(B)$ for all Borel set $B$ of $\R^d$.

In  \cite{BB}, Benamou and Brenier prove that the Wasserstein distance defined in \eqref{def:W}
can be characterized, for absolutely continuous measures with respect to the Lebesgue measure $\Leb{d}$,
with compactly supported smooth densities, as follows
\begin{equation}\label{dynamicalformulationW}
  \begin{aligned}
    &W_p^p(\mu_0,\mu_1)= \inf\Big\{\int_0^1 \int_{\R^d}
    |\vv_t (x)|^p\rho_t(x)\,\d x\,\d t\,:\\
    &\quad
    \partial_t\rho_t+\nabla\cdot(\rho_t\mathbf\vv_t)=0\ \text{in
      $\R^d\times (0,1)$,}\quad \mu_0=\rho\restr{t=0}\Leb d,\quad
    \mu_1=\rho\restr{t=1}\Leb d\Big\}.
  \end{aligned}
\end{equation}
The proof of the dynamical characterization for general non-negative Borel measures was given in \cite{AGS}
where the continuity equation in \eqref{dynamicalformulationW} was considered in distributional sense.

The generalization of \eqref{dynamicalformulationW} studied in \cite{DNS},
roughly speaking, replace the mobility coefficient $\rho$ in \eqref{dynamicalformulationW}
with a non-linear one $h(\rho)$, where $h:[0,+\infty)\to [0,+\infty)$
is a concave increasing function such that $h(0)=0$
(particularly important examples are the functions $h(\rho)=\rho^\alpha$, $\alpha\ge0$)
and the new ``distance'' is defined modifying \eqref{dynamicalformulationW} as follows
\begin{equation}\label{def:Wh}
  \begin{aligned}
    & W_{p,h}^p(\mu_0,\mu_1)= \inf\Big\{\int_0^1 \int_{\R^d}
   |\vv_t(x)|^p h\big(\rho_t(x)\big)\,\d x\,\d t:\\
    &\quad
   \partial_t\rho_t+\nabla\cdot(h(\rho_t)\,\vv_t)=0\ \text{in
    $\Rd\times (0,1)$,}\quad
   \mu_0=\rho\restr{t=0}\Leb d,\quad
  \mu_1=\rho\restr{t=1}\Leb d\Big\}.
  \end{aligned}
\end{equation}
This ``definition'' is not rightly stated because it is necessary to specify
the spaces where $\rho$ and $\vv$ has to belong,
and the notion of solution of the modified continuity equation in \eqref{def:Wh}.
The right framework is that of Radon measures and distributional solutions.

The motivation for studying distances defined like in \eqref{def:Wh} arises from physical problems.
Indeed many interesting models are described by
partial differential equations whose solutions can be seen as trajectory
of the gradient flow of a suitable energy functional with respect to this distance
(see for instance the introduction of \cite{DNS} and \cite{CLSS}).

On the other hand, the concave mobility $h(\rho) \geq 0$
considered in \cite{DNS} is defined on the unbounded interval $[0,+\infty)$
and has to be necessarily non-decreasing.
If we want to consider non-monotone concave mobilities $h(\rho)\geq0$,
then the domain of $h$ has to be a bounded interval.
This case, not considered in \cite{DNS}, is physically interesting.
Indeed, examples of equations that can be modeled as gradient flows with respect to this kind of distances
are a version of Cahn-Hilliard equation \cite{EG},
some equation modelling chemotaxis with prevention of overcrowding \cite{BFD06,BDi,DiR},
equations describing the relaxation of gas of fermions
\cite{bib:k3,bib:k,bib:f1,bib:f2,CLR08,CRS08},
studies of phase segregation \cite{GL1, Sl08}, and studies of thin liquid
films \cite{Ber98}.

The principal example of mobility function in the papers cited above is
$$h(\rho)=\rho(1-\rho), \qquad\text{defined in } [0,1],$$  or $h(\rho)=1-\rho^2$ defined in $[-1,1]$,
mainly for the Cahn-Hilliard equation, the relaxation of fermion gas and the chemotaxis with overcrowding prevention.
A more general example is of the form
$h(\rho)=(\rho-a)^\alpha(b-\rho)^\beta$ defined in $[a,b]$ for some $\alpha,\beta\in[0,1]$.
In the previous examples, if $a<0$ then the density could be negative at some points and we have to consider
signed measures instead of non-negative measures.

In this paper we will show that almost all the properties of the distance studied in \cite{DNS}
can be extended to this case.

As previously observed, in order to give a precise meaning of the dynamical characterization \eqref{dynamicalformulationW}
and to define in a rigorous way the modified distance \eqref{def:Wh},
the right framework is that of time dependent families of Radon measures
and distributional solutions of the continuity equation.
Following the explanation given in the introduction of \cite{DNS}, we
replace $\rho_t$ by a continuous curve $t\in [0,1]\mapsto \mu_t$
($\mu_t=\rho_t\,\Leb d$ in the absolutely continuous case)
in the space $\MM^+(\R^d)$ of nonnegative Radon measures in $\R^d$
endowed with the usual weak$^*$ topology.
We replace the vector field $\vv_t$ in \eqref{dynamicalformulationW}
with a time dependent family of vector measures
$\nnu_t:=\vv_t\mu_t\ll\mu_t$.
The continuity equation in \eqref{dynamicalformulationW}
can be written in terms of the couple $(\mu,\nnu)$
\begin{equation}\label{eq:cap1:9}
  \partial_t \mu_t+\nabla\cdot\nnu_t=0\quad\text{in the sense of
    distributions in }\mathscr D'(\R^d\times(0,1)),
\end{equation}
and it is a linear equation.
Since $\vv_t=\d \nnu_t/\d \mu_t$
is the density of $\nnu_t$ with respect to $\mu_t$,
the action functional
which has to be minimized in \eqref{dynamicalformulationW} is
\begin{equation}\label{eq:cap1:10}
  \int_0^1\Phi(\mu_t,\nnu_t)\,\d t,\qquad
  \Phi(\mu,\nnu):=\int_\Rd \left|\frac{\d \nnu}{\d
      \mu}\right|^p\,
  \d\mu.
\end{equation}
In the case of  absolutely continuous measures with respect to $\Leb d$, i.e.
$\mu=\rho\Leb d$ and $\nnu=\ww\Leb d$,
the functional $\Phi$ can be expressed as
\begin{equation}\label{eq:cap1:12}
  \Phi(\mu,\nnu):=\int_{\R^d} \phi(\rho,\ww)\,\d\Leb
  d(x),\quad
  \phi(\rho,\ww):=\rho\left|\frac \ww\rho\right|^p.
\end{equation}
Denoting by $\CE(0,1)$ the class of measure-valued
distributional solutions $(\mu,\nnu)$ of the continuity equation
\eqref{eq:cap1:9}, we can state the dynamical
characterization of the Wasserstein distance as follows
\begin{equation}\label{eq:cap1:13}
  W_p^p(\mu_0,\mu_1)=\inf\Big\{\int_0^1\Phi(\mu_t,\nnu_t)\,\d t: (\mu,\nnu)\in \CE(0,1),\,\,
  \mu\restr{t=0}=\mu_0,\, \mu\restr{t=1}=\mu_1 \Big\}
\end{equation}
(as already observed, the Benamou-Brenier characterization \eqref{eq:cap1:13}
for Borel non-negative measures was proven in \cite{AGS}).
We observe that the function $\phi$ defined in \eqref{eq:cap1:12} is $p$-homogeneous w.r.t.\ $\ww$,
is convex with respect to $(\rho,\ww)$,
and positively $1$-homogeneous with respect to $(\rho,\ww)$.
By the $1$-homogeneity it is immediate to check that the functional $\Phi$ in \eqref{eq:cap1:12}
is independent on the Lebesgue measure, in the sense that if $\gamma \in \MM^+_\loc(\R^d)$ is another reference measure
such that $\supp (\gamma) = \R^d$ and $\mu=\tilde\rho\gamma$ and $\nnu=\tilde\ww\gamma$, then
\begin{equation}\label{eq:cap1:14}
    \Phi(\mu,\nnu)=\int_{\R^d} \phi(\tilde\rho,\tilde\ww)\,\d\gamma.
\end{equation}

We explain the main idea of \cite{DNS} for state rigorously the intuitive ``definition'' \eqref{def:Wh}.
Given a concave mobility function $h:(a,b)\to (0,+\infty)$, we consider
still the linear continuity equation \eqref{eq:cap1:9} and modify the action density $\phi$ in the following way:
$\phi:(a,b)\times \R^d\to[0,+\infty)$
\begin{equation}\label{eq:cap1:20}
  \phi(\rho,\ww):=h(\rho)\left|\frac\ww{h(\rho)}\right|^p.
\end{equation}
The concavity of $h$ is a necessary and sufficient
condition for the convexity of $\phi$ in \eqref{eq:cap1:20} (see
\cite{R} and Theorem \ref{pleg}).
We observe that $\phi$ still satisfies the $p$-homogeneity with respect to $\ww$ and is globally convex,
but it is no longer positively $1$-homogeneous with respect to $(\rho,\ww)$.
Hence, in order to consider the integral functional $\Phi$ like \eqref{eq:cap1:14}
it is necessary to precise the reference measure $\gamma\in\MM^+_\loc(\R^d)$ for that
$\rho$ and $\ww$ are the densities of $\mu$ and $\nnu$ respectively.
Defining $$\Phi(\mu,\nnu)=\int_{\R^d}\phi(\rho,\ww)\d\gamma$$ when $\mu=\rho\gamma$, $\nnu=\ww\gamma$,
and defining $\Phi$ suitably  on the singular part of $\mu$ and $\nnu$ with respect to $\gamma$,
(see Definition \ref{def:Phi})
the definition of the generalized Wasserstein distance associated to $(\phi,\gamma)$ is therefore
\begin{equation}\label{eq:cap1:16}
  \Wmod_{\phi,\gamma}^p(\mu_0,\mu_1):=\inf\Big\{\int_0^1\Phi(\mu_t,\nnu_t)\,\d t:
  (\mu,\nnu)\in \CE(0,1),\,\, \mu\restr{t=0}=\mu_0, \, \mu\restr{t=1}=\mu_1 \Big\}.
\end{equation}
Particularly important for the applications are the following choices of $\gamma$:
\begin{itemize}
\item $\gamma:=\Leb d_{|\Omega}=\chi_\Omega \Leb{d}$,
with $\Omega$ an open subset of $\R^d$;
\item $\gamma:=e^{-V}\Leb d$ for some $C^1$ potential $V:\R^d\to\R$;
\item $\gamma:=\mathscr H^k\restr{\M}$, where $\M$ is a smooth $k$-dimensional manifold embedded in $\R^d$
with the Riemannian metric induced by the Euclidean distance and $\mathscr H^k$ denotes the
$k$-dimensional Hausdorff measure.
\end{itemize}

In the paper \cite{CLSS}, the authors used this kind of distance in the case
$\gamma =\Leb d_{|\Omega}$ in order to study the problem of the convexity of
integral functionals along geodesics induced by the distance. The forthcoming
paper \cite{LMS} will be devoted to the study of forth orders equations
(Cahn-Hilliard type with nonlinear mobility and thin-film like equations), with
the proof of the existence of solutions by means of the minimizing movements
approximation scheme (see \cite{AGS}) for the distance like \eqref{eq:cap1:16}
and a first order integral functional.

We conclude this introduction stating the principal properties obtained in this paper for the distance like
\eqref{eq:cap1:16} with $h:(a,b)\to(0,+\infty)$,
referring to Section \ref{sec:W} for the precise definitions and the complete statements.
We recall that the choice of consider the mobility with bounded domain $(a,b)$
allow to consider also the distance between signed measures.

\begin{itemize}
\item
The space $\MM_\loc(\R^d)$ endowed with the distance
$\Wmod_{\phi,\gamma}$ is a complete pseudo-metric space (the
distance can assume the value $+\infty$), inducing as strong as,
or stronger topology than the weak$^*$ one.
Bounded sets with respect to $\Wmod_{\phi,\gamma}$ are weakly$^*$ relatively compact.
The distance $\Wmod_{\phi,\gamma}$ is lower semi continuous with respect to
the weak$^*$ convergence.
\item
In order to avoid that the distance could be $+\infty$ we consider
the space $\MM[\sigma]:= \big\{\mu\in \MM_\loc(\R^d): \Wmod_{\phi,\gamma}(\mu,\sigma)<+\infty\big \}$
for a given measure $\sigma \in \MM_\loc(\R^d)$.
The space $\MM[\sigma]$ turns out to be a complete metric space.
\item
$\MM[\sigma]$ is a geodesic space and the geodesic are unique if $h$ is strictly concave.
\item If $\mom_{-q}(\gamma)<+\infty$, where $q$ is the conjugate exponent of $p$
and the generalized momentum is defined in Definition \ref{def:momentum},
then $\mu(\R^d)=\sigma(\R^d)$ for every $\mu\in\MM[\sigma]$.
\end{itemize}
Finally, in Section \ref{connectivity} we give
sufficient conditions on the measures $\mu_0$, $\mu_1$ in order to have finiteness of the distance
$\Wmod_{\phi,\gamma}(\mu_0,\mu_1)$, and we prove two results: one for the all space $\R^d$ with the
Lebesgue measure as a reference, the other one for convex bounded domains in $\R^d$.
In the two cases we study also the relation between the weak-$*$ convergence of measures
and the convergence with respect to the distance $\Wmod_{\phi,\gamma}$.

\newpage

\section{Preliminaries}\label{pre}
In this Section we introduce the necessary tools
in order to define in the next Section the modified Wasserstein distance
and prove its basic properties.
The contents are an adaptation of Sections 2-4 of \cite{DNS}.
\subsection{Notation}\label{not}

Let $X$ be a topological space, $A\subset X$, $f:X\to \mathbb R\cup\{\pm \infty\}$ be a function.
We denote by:\\
\begin{tabular}{ll}&\\
$\mathrm{int}(A)$, $\bar A$, $\partial A$& the \emph{interior}, the \emph{closure} and the \emph{boundary} of $A$, respectively;\\ &\\
$\chi_A:X\to\{0,1\}$& the \emph{characteristic function} of $A$, namely\\ &$\chi_A(x)=1$ if $x\in A$, $\chi_A(x)=0$ if $x\notin A$;\\ &\\
$\mathrm{dom}(f):=\{x\in X:\,f(x)\in\mathbb R\}$&the \emph{(effective) domain} of $f$;\\ &\\
$\mathrm{epi}(f):=\{(x,\alpha)\in X\times\mathbb R:\,\alpha\ge f(x)\}$& the \emph{epigraph} of $f$;\\ &\\
$\mathrm{hypo}(f):=\{(x,\beta)\in X\times\mathbb R:\,\beta\le f(x)\}$& the \emph{hypograph} of $f$.\\ &\\
$\mathscr L^d$&the \emph{Lebesgue measure} on $\mathbb R^d$;\\ &\\
$\mathcal M_{\mathrm{loc}}(\mathbb R^d)$&the set of \emph{signed Radon measures} on $\mathbb R^d$;\\ &\\
$\mathcal M^+_{\mathrm{loc}}(\mathbb R^d)$&the set of \emph{non-negative Radon measures} on $\mathbb R^d$;\\ &\\
$\mathcal M_{\mathrm{loc}}(\mathbb R^d;\mathbb R^h)$&the set of $\mathbb R^h$-valued Radon measures on $\mathbb R^d$.\\ &\\
\end{tabular}\\
We say that $f$ is \emph{lower semicontinuous or l.s.c.} (resp. \emph{upper semicontinuous or u.s.c.}) iff $\mathrm{epi}(f)$ (resp. $\mathrm{hypo}(f)$) is closed in $X\times\mathbb R$.
If $(X,d)$ is a metric space, this is equivalent to say that $f$ is l.s.c. (resp. u.s.c.) iff $\displaystyle f(x)\le\liminf_{y\to x}f(y)$ (resp. $\displaystyle f(x)\ge\limsup_{y\to x}f(y)$).

\subsubsection{Push-forward of measures}
Given a Borel measure $\mu$ on a topological space X, and a Borel map $T:X \to Y$,
with values in a topological space $Y$,
we define the image measure of $\mu$ through the map $T$, denoted by $\nu=T_\# \mu$,
by  $\nu(B):=\mu(T^{-1}(B))$, for any Borel measurable set $B \subset Y$, or equivalently
\begin{equation}\label{def:pushforward}
    \int_Y \zeta(y) \,d\nu(y) = \int_X \zeta(T(x)) \, d\mu(x)\, , \quad \forall \zeta \in \mathcal C_b^0(Y).
\end{equation}
If $X$ and $Y$ are domains of $\R^d$, the map $T$ is sufficiently smooth and the measures $\mu$ and $\nu$ are
absolutely continuous with respect to Lebesgue measure with densities
$\widetilde \rho$ and $\rho$ respectively, then $\nu=T_\# \mu$ is
equivalent, by the change of variables theorem, to
\begin{equation}\label{dpf}
\rho(T(x))\,\det(DT(x))=\widetilde\rho(x).
\end{equation}
The formula \eqref{dpf} for the densities holds in a very greater generality (see \cite[Lemma 5.5.3]{AGS}).

\subsection{Convex Analysis}\label{cnv}
In this subsection we recall some concepts from convex analysis, our main reference is \cite{R}.

\begin{definition}[Recession functional]
Let $f:\mathbb R^N\to \mathbb R\cup\{+\infty\}$ be a proper convex function. The \emph{recession functional} $f^\infty$ of $f$
is the positively homogeneous proper convex function defined by
(cfr. \cite[Theorem 8.5, p.66]{R}):
$$f^\infty(y):=\sup\{f(x+y)-f(x):\,x\in\mathrm{dom}f\}.$$
If $f$ is l.s.c, then $f^\infty$ is l.s.c., and for any $x\in\mathrm{dom}(f)$ it holds:
$$f^\infty(y):=\lim_{\lambda\to+\infty}\frac{f(x+\lambda y)-f(x)}\lambda.$$
We have that:
\begin{enumerate}
\item if $0\in\mathrm{dom}(f)$, it holds
$\displaystyle f^\infty(y):=\lim_{\lambda\to+\infty}\frac{f(\lambda y)}\lambda$ for all $y\in\mathbb R^N$.
\item if $0\notin\mathrm{dom}(f)$, it holds
$\displaystyle f^\infty(y):=\lim_{\lambda\to+\infty}\frac{f(\lambda y)}\lambda$ for all $y\in\mathrm{dom}(f)$.
\end{enumerate}
\end{definition}

\begin{definition}[Concave-convex functions]
Let $C\subset\mathbb R^k$, $D\subset\mathbb R^d$ be convex sets, and $\tilde f:C\times D\to\mathbb R\cup\{\pm\infty\}$ be a function.
We will call $\tilde f$ a \emph{concave-convex} function if:
\begin{enumerate}
\item for each $z\in D$ the map $r\mapsto \tilde f(r,z)$ is concave,
\item for each $r\in C$ the map $z\mapsto \tilde f(r,z)$ is convex.
\end{enumerate}
Given a concave-convex function $\tilde f:C\times D\to\mathbb R$, we define its
\emph{lower extension} $\tilde f_1:\mathbb R^{k}\times\mathbb R^d\to\mathbb R\cup\{\pm\infty\}$
by setting:
$$\tilde f_1(r,z)=\left\{\begin{array}{lll}\tilde f(r,z)&\textrm{if}&r\in C,\,z\in D\\
+\infty&\textrm{if}&r\in C,\,z\notin D\\
-\infty&\textrm{if}&r\notin C \end{array}
\right.$$
$\tilde f_1$ is still a concave-convex function.
\end{definition}

\begin{theorem}[Partial Legendre]\label{pleg}
Let $f:\mathbb R^k\times\mathbb R^d\to\mathbb R\cup\{+\infty\}$ be convex and l.s.c.
Then the function defined by:
$$\tilde f(r,z):=\sup_{w\in\mathbb R^d}[\langle z,w\rangle- f(r,w)]$$
is a concave-convex function from $\mathbb R^k\times\mathbb R^d$ to $\mathbb R\cup\{\pm\infty\}$.
For every fixed $r$, the function $z\mapsto \tilde f(r,z)$ is l.s.c.
Conversely, given any concave-convex function $\tilde f:\mathbb R^k\times\mathbb R^d\to\mathbb R\cup\{\pm\infty\}$,
the function defined by:
$$f(r,w):=\sup_{z\in\mathbb R^d}[\langle z,w\rangle- \tilde f(r,z)]$$
is a convex map and for every fixed $r$, the function $w\mapsto f(r,w)$ is l.s.c.
Moreover, if $\mathrm{dom}(\tilde f)=C\times D$ and $\tilde f$ agrees with its lower extension,
then $f$ is l.s.c.
\end{theorem}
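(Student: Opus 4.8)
The plan is to read the statement as a partial (one‑variable) Legendre--Fenchel duality, holding the concave variable $r$ fixed and dualising only in the pair $(w,z)$, while reading off the joint concavity/convexity from the way the parameter $r$ enters. I would organise the argument into the forward direction, the symmetric converse, and the lower‑semicontinuity refinement. For the forward direction, fix $r$ and observe that $z\mapsto\tilde f(r,z)=\sup_{w}[\langle z,w\rangle-f(r,w)]$ is by definition the Legendre--Fenchel conjugate of the convex function $w\mapsto f(r,w)$; hence it is convex and l.s.c.\ in $z$, being a pointwise supremum of continuous affine functions of $z$, which already gives the ``convex in $z$'' and ``l.s.c.\ in $z$'' assertions. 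For concavity in $r$ I would use that $f$ is jointly convex, so for each fixed $z$ the map $(r,w)\mapsto\langle z,w\rangle-f(r,w)$ is jointly concave; the standard fact that partial suprema of jointly concave functions are concave then shows $r\mapsto\tilde f(r,z)$ is concave. Allowing the values $\pm\infty$ (which occur, e.g., when $f(r,\cdot)\equiv+\infty$) this proves $\tilde f$ is concave--convex.

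The converse is entirely symmetric. For fixed $r$ the map $w\mapsto f(r,w)=\sup_{z}[\langle z,w\rangle-\tilde f(r,z)]$ is again a supremum of affine functions of $w$, hence convex and l.s.c.\ in $w$. For joint convexity I would note that, since $\tilde f$ is concave in $r$, for each fixed $z$ the map $(r,w)\mapsto\langle z,w\rangle-\tilde f(r,z)$ is jointly convex, being affine in $w$ plus a convex function of $r$; and a pointwise supremum of jointly convex functions is jointly convex, so $f$ is convex on $\mathbb R^k\times\mathbb R^d$.

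The delicate point, and the one I expect to be the main obstacle, is the final assertion: that under $\dom(\tilde f)=C\times D$ together with agreement of $\tilde f$ with its lower extension, the conjugate $f$ is \emph{jointly} l.s.c. Writing $f=\sup_{z}g_z$ with $g_z(r,w):=\langle z,w\rangle-\tilde f(r,z)$ and using that a supremum of jointly l.s.c.\ functions is jointly l.s.c., it suffices to show that each $g_z$ is jointly l.s.c.; since $g_z$ is continuous in $w$, this reduces to upper semicontinuity of the concave section $r\mapsto\tilde f(r,z)$. This is exactly the step requiring the lower‑extension hypothesis: a concave function on $C$ is u.s.c.\ on $\Int C$ but may fail to be so at boundary points, and the prescription of the values $\pm\infty$ off $C\times D$ is precisely the closure (in Rockafellar's sense of closed concave--convex functions, see \cite{R}) that renders the saddle sections u.s.c. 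My plan is therefore to establish, for each fixed $z$, the upper semicontinuity of $r\mapsto\tilde f(r,z)$ on all of $\mathbb R^k$ from the closedness encoded by $\tilde f=\tilde f_1$, treating $\Int C$ (where concavity already yields continuity) and the boundary separately, and to discard the indices $z\notin D$, for which $g_z\equiv-\infty$ on $C\times\mathbb R^d$ so that they do not affect the supremum. Once the sections are known to be u.s.c., the elementary sup‑of‑l.s.c.\ principle closes the argument, with the customary care for the regions where the values $\pm\infty$ are attained.
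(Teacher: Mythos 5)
The paper does not actually prove this theorem: its ``proof'' is a citation of \cite[Theorem 33.1]{R}, so your attempt at a self-contained argument goes beyond what the paper does. The first two thirds of your plan are correct and standard: convexity and lower semicontinuity in $z$ (resp.\ in $w$) as a supremum of affine functions, concavity in $r$ of $\tilde f$ by partial maximization of the jointly concave map $(r,w)\mapsto\langle z,w\rangle-f(r,w)$, and joint convexity of $f$ as a supremum of the jointly convex maps $g_z(r,w)=\langle z,w\rangle-\tilde f(r,z)$ (an epigraph-intersection argument handles the values $\pm\infty$).

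The gap is in the final step. You correctly isolate the difficulty --- upper semicontinuity of the concave sections $r\mapsto\tilde f(r,z)$ at boundary points of $C$ --- but your claim that agreement with the lower extension ``renders the saddle sections u.s.c.'' is false. The lower extension only prescribes the value $-\infty$ \emph{outside} $C$, which controls the $\limsup$ from outside $C$; it says nothing about the $\limsup$ from inside $C$, which is where a concave function can jump down at $\partial C$. Concretely, take $k=d=1$, $C=[0,1]$, $D=\R$, and $\tilde f(r,z)=h(r)+z^2$ on $C\times D$ with $h(0)=h(1)=0$ and $h\equiv 1$ on $(0,1)$, extended by $-\infty$ off $C$: this is concave-convex, has $\mathrm{dom}(\tilde f)=C\times D$, and agrees with its lower extension, yet $f(r,w)=-h(r)+w^2/4$ on $C\times\R$ (and $+\infty$ elsewhere) fails to be l.s.c.\ at $r=0$. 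So the final assertion cannot be derived from the stated hypotheses alone; one needs in addition that the sections $r\mapsto\tilde f(r,z)$ are u.s.c.\ (i.e.\ that $\tilde f$ is a \emph{closed} saddle function in Rockafellar's sense, which is the actual setting of \cite[Theorem 33.1]{R}). This extra regularity does hold in every application in the paper --- the marginal conjugates there are of the form $\tilde\phi(\rho,\zz)=h(\rho)|\zz|^q$ with $h$ u.s.c.\ (Definition \ref{def:phih}) --- and once it is assumed your sup-of-l.s.c.\ argument closes exactly as you describe. You should therefore either add the u.s.c.\ hypothesis on the sections or flag that the statement, as written, omits it.
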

\begin{proof}
See \cite[Theorem 33.1]{R}.
\end{proof}

\subsection{Action function}

\begin{definition}[Admissible action density functions]
Let $\phi:\mathbb R\times\mathbb R^d\to[0,+\infty]$ be a l.s.c. nonnegative proper convex function, $1<p<\infty$.
We say that $\phi$ is an \emph{admissible action density of order} $p$ if it satisfies the following two properties:
\begin{enumerate}
\item[(F1)] $\ww\mapsto\phi(\cdot,\ww)$ is $p$-homogeneus,
 i.e. for every given $\rho\in\mathbb R$ such that $\{\rho\}\times\mathbb R^d\cap\mathrm{dom}(\phi)\ne \emptyset$
 we have $\phi(\rho,0)=0$ and for every $\lambda\ne 0$, $\ww\in\mathbb R^d$
 we have $\phi(\rho,\lambda \ww)=|\lambda|^p\phi(\rho, \ww)$ (both sides may be $+\infty$).
\item[(F2)] there exists $\rho_0\in \mathbb R$ such that $\{\rho_0\}\times\mathbb R^d\subseteq\mathrm{dom}(\phi)$
 and $\phi(\rho_0,\ww)>0$ for all $\ww\ne 0$.
\end{enumerate}
The set of all admissible action densities of order $p$ will be denoted by $\AdmissibleDensity$.
Given $a,b\in\R$, $a<b$ we will denote by $\AdmissibleDensityab$ the set of action densities in $\AdmissibleDensity$ such that
$\Int(\dom(\phi))=(a,b)\times\R^d$.\\
Let $q$ be the conjugate exponent of $p$. We construct the \emph{partial dual} $\mathcal A^*_q$ of $\mathcal A_p$
as follows. For all $\phi\in \mathcal A_p$, we define
the concave-convex function $\tilde\phi:\mathrm{dom}(\phi)\to\mathbb R\cup\{+\infty\}$ by setting:
\begin{equation}\label{def:phitilde}
\frac1q\tilde\phi(\rho,\zz):=\sup_{\ww\in \mathbb R^d}\Big\{\langle \zz,\ww\rangle-\frac1p\phi(\rho,\ww)\Big\}.
\end{equation}
We will call the lower extension of $\tilde\phi$ the \emph{marginal conjugate} of $\phi$ and we will still denote it by $\tilde\phi$.
We observe that $\tilde\phi$ is $q$-homogeneous with respect to the second variable and $\tilde\phi(\rho,\zz)\geq 0$.
We define:
$$\mathcal A^*_q:=\{\tilde \phi:\,\tilde\phi \textrm{ is the marginal conjugate of }\phi,\,\phi\in\mathcal A_p\}$$
and it is easy to check that $\Int(\dom(\tilde\phi))=(a,b)\times\R^d$ if $\phi\in\AdmissibleDensityab$.
\end{definition}

The following proposition can be proved exactly as Theorem 3.1 of \cite{DNS}.

\begin{proposition}[$\phi$-norm]\label{prop:phinorm}
Let $1<p<+\infty$, $q$ be the conjugate exponent of $p$ and $\phi\in\mathcal A_p$.
Then:
\begin{enumerate}
\item For every $\rho\in\mathbb R$ such that $\{\rho\}\times\mathbb R^d\subset\mathrm{dom}(\phi)$,
 the functions $\ww\mapsto\phi(\rho,\ww)^{1/p}$ and $\zz\mapsto\tilde \phi(\rho,\zz)^{1/q}$ are norms on $\mathbb R^d$ each one dual of the other.
We have:
\begin{equation}\label{def:phinorm}
\|\zz\|_{(\phi,\rho)*}:=\tilde\phi(\rho,\zz)^{1/q}=\sup_{\ww\ne 0}\frac{\langle \ww,\zz\rangle}{\phi(\rho,\ww)^{1/p}},\hspace{2cm}
\|\ww\|_{(\phi,\rho)}:=\phi(\rho,\ww)^{1/p}=\sup_{\zz\ne 0}\frac{\langle \ww,\zz\rangle}{\tilde \phi(\rho,\zz)^{1/q}}.
\end{equation}
\item The restriction to $\mathrm{dom}(\tilde \phi)$ of the marginal conjugate $\tilde\phi$ of $\phi$ takes its values in $[0,+\infty)$ and
it is a concave-convex function.
\item Given $\rho_0,\rho_1\in\mathbb R$ with $[\rho_0,\rho_1]\times\mathbb R^d\subseteq\mathrm{dom}(\phi)$, there exists a constant $C=C(\rho_0,\rho_1)$
such that for every $\rho\in[\rho_0,\rho_1]$ it holds:
$$C^{-1}|\ww|^p\le\phi(\rho,\ww)\le C|\ww|^p,\hspace{2cm}C^{-1}|\zz|^q\le\tilde\phi(\rho,\zz)\le C|\zz|^q,\hspace{1cm}\forall \ww,\zz\in\mathbb R^d.$$
\end{enumerate}
Equivalently, a function $\phi$ belongs to $\mathcal A_p$ if and only if it admits the dual representation formula
\begin{equation}\label{phileg}
\frac1p\phi(\rho,\ww):=\sup_{\zz\in \mathbb R^d}\Big\{\langle \zz,\ww\rangle-\frac1q\tilde\phi(\rho,\zz)\Big\},
\end{equation}
where $\tilde\phi:\mathbb R\times\mathbb R^d\to[0,\infty)$ is (the lower extension of) a nonnegative concave-convex function which is $q$-homogeneous with respect to $\zz$.
\end{proposition}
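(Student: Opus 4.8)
The plan is to reduce every assertion to the single structural fact that, for each admissible height $\rho$ (i.e.\ each $\rho$ with $\{\rho\}\times\R^d\subset\dom(\phi)$), the function $N_\rho(\ww):=\phi(\rho,\ww)^{1/p}$ is a genuine norm on $\R^d$. Once this is known, statement (1) is pure finite-dimensional convex duality: since $\tfrac1p\phi(\rho,\cdot)=\tfrac1p N_\rho^p$, the partial transform defining $\tilde\phi$ in \eqref{def:phitilde} is exactly the classical computation $(\tfrac1p N_\rho^p)^*=\tfrac1q(N_\rho^*)^q$, which identifies $\tilde\phi(\rho,\zz)^{1/q}=N_\rho^*(\zz)$ with the dual norm and yields both formulas in \eqref{def:phinorm} (the second being $N_\rho^{**}=N_\rho$). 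The representation \eqref{phileg} is then the biconjugation $\phi=\phi^{**}$, legitimate because $\phi(\rho,\cdot)$ is convex and l.s.c., i.e.\ it is the partial Legendre theorem (Theorem \ref{pleg}) applied twice.

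The heart of the matter, and the step I expect to be the main obstacle, is the positive definiteness of $N_\rho$. Absolute homogeneity $N_\rho(\lambda\ww)=|\lambda|N_\rho(\ww)$ is immediate from the $p$-homogeneity (F1), and subadditivity follows from convexity: $N_\rho$ is nonnegative and $1$-homogeneous with convex unit sublevel set $\{N_\rho\le1\}=\{\phi(\rho,\cdot)\le1\}$, so it coincides with the Minkowski gauge of that convex set and is therefore sublinear. What is not automatic is that $N_\rho(\ww)=0$ forces $\ww=0$, since a nonnegative convex $p$-homogeneous function may vanish on a nontrivial subspace. To rule this out I would argue by contradiction using the joint convexity and lower semicontinuity of $\phi$ together with (F2). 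Suppose $\phi(\rho,\ww_0)=0$ for some admissible $\rho$ and some $\ww_0\neq0$; by (F1) the whole line $\R\ww_0$ lies in the convex set $\{\phi=0\}$ at height $\rho$, while $(\rho_0,0)\in\{\phi=0\}$ by (F1) at the special height $\rho_0$ of (F2). Taking convex combinations of $(\rho,t\ww_0)$ with $(\rho_0,0)$ and letting $t$ range over $\R$ shows $\phi(\cdot,s\ww_0)\equiv0$ on the whole segment of heights strictly between $\rho$ and $\rho_0$; lower semicontinuity then propagates the vanishing up to $\rho_0$ itself, giving $\phi(\rho_0,s\ww_0)=0$ for $s\neq0$ and contradicting (F2).

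With the norm property in hand the remaining items are routine. For (2), nonnegativity of $\tilde\phi$ follows by testing \eqref{def:phitilde} with $\ww=0$ and $\phi(\rho,0)=0$; that $\tilde\phi$ is concave-convex and finite on its domain is exactly the output of Theorem \ref{pleg} together with the fact that $\tilde\phi(\rho,\cdot)^{1/q}=N_\rho^*$ is a finite norm. For the bounds in (3), I would fix the compact interval $[\rho_0,\rho_1]$ and the Euclidean unit sphere: the upper bound $\phi(\rho,\ww)\le C|\ww|^p$ comes from convexity of $\rho\mapsto\phi(\rho,\ww)$, which bounds it by $\max\{\phi(\rho_0,\ww),\phi(\rho_1,\ww)\}$, combined with continuity of the finite convex functions $\phi(\rho_i,\cdot)$ on the sphere; the lower bound requires the extra observation that $\rho\mapsto\min_{|\ww|=1}\phi(\rho,\ww)$ has strictly positive infimum on $[\rho_0,\rho_1]$, which I would obtain from a compactness-and-lower-semicontinuity argument (a minimizing pair $(\rho_n,\ww_n)$ subconverges to some $(\rho_*,\ww_*)$ with $|\ww_*|=1$, and l.s.c.\ plus positive definiteness at the admissible height $\rho_*$ force the limit to be positive). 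The dual bounds on $\tilde\phi$ then follow by passing to dual norms, and the final equivalence is the converse reading of the same duality: starting from a nonnegative $q$-homogeneous concave-convex $\tilde\phi$, formula \eqref{phileg} produces a convex l.s.c.\ $p$-homogeneous $\phi$, and one checks (F1)–(F2) directly, (F2) coming from the height at which $\tilde\phi(\rho,\cdot)^{1/q}$ is a genuine norm.
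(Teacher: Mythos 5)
Your proof is correct and follows essentially the same route as the argument the paper relies on (the paper gives no proof of its own, deferring to Theorem 3.1 of \cite{DNS}): establish that $\ww\mapsto\phi(\rho,\ww)^{1/p}$ is a genuine norm --- with positive definiteness propagated from (F2) via convexity of the zero set $\{\phi=0\}$ and lower semicontinuity --- and then read off statements (1)--(3) and \eqref{phileg} from finite-dimensional Legendre duality for $p$-th powers of norms. The only point worth noting is that in the converse direction of the final equivalence, (F2) genuinely requires the existence of a height at which $\tilde\phi(\rho,\cdot)^{1/q}$ is a finite positive-definite norm (it can fail for degenerate $\tilde\phi$, e.g.\ $\tilde\phi\equiv 0$); you correctly isolate this as the needed hypothesis, and the imprecision here is inherited from the statement itself rather than introduced by your argument.
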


\begin{lemma}\label{homog}
Let $\phi\in\mathcal A_p$. Then the recession functional is $p$-homogeneous with respect to the second variable, i.e.
$\phi^\infty(\rho,\lambda \ww)=|\lambda|^p\phi^\infty(\rho,\ww)$ for $\lambda\in\mathbb R$. Moreover,
for $\rho\ne 0$ it is possible to express $\phi^\infty(\rho,\ww)=\rho\varphi^\infty(\ww/\rho)$,
where $\varphi^\infty:\mathbb R^d\to[0,+\infty]$ is  convex $p$-homogeneous
function such that $\varphi^\infty(\ww)> 0$ if $\ww\ne 0$.\\
\end{lemma}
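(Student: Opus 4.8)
The plan is to derive everything from the structural properties of the recession functional recalled in Section~\ref{cnv} (it is proper, convex, l.s.c.\ and positively $1$-homogeneous) together with property (F1), property (F2), and the duality/norm structure of Proposition~\ref{prop:phinorm}. I first record that $\phi^\infty\ge 0$: since $\phi\ge 0$, each difference quotient $(\phi(x+\lambda y)-\phi(x))/\lambda\ge -\phi(x)/\lambda$ tends to $0$, so the limit defining $\phi^\infty$ is nonnegative.

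For the $p$-homogeneity in $\ww$, the idea is to transport the $p$-homogeneity of $\phi$ through the recession operation by a linear change of variables. For $\lambda\ne 0$ let $T_\lambda(\rho,\ww):=(\rho,\lambda\ww)$, a linear isomorphism of $\R\times\R^d$. Property (F1) reads precisely $\phi\circ T_\lambda=|\lambda|^p\phi$. Two elementary identities for recession functionals, namely $(c\,\phi)^\infty=c\,\phi^\infty$ for $c>0$ and $(\phi\circ T)^\infty=\phi^\infty\circ T$ for a linear bijection $T$ (both follow directly from the $\sup$ definition after the substitution $u=Tx$), then give $\phi^\infty(\rho,\lambda\ww)=(\phi\circ T_\lambda)^\infty(\rho,\ww)=|\lambda|^p\phi^\infty(\rho,\ww)$. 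When $\phi^\infty(\rho,\cdot)$ is finite at some point, the case $\lambda=0$ follows from this identity and lower semicontinuity, since $\phi^\infty(\rho,0)\le\liminf_{\lambda\to 0}|\lambda|^p\phi^\infty(\rho,\ww)=0$.

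Next, the representation. By positive $1$-homogeneity of $\phi^\infty$, for $\rho>0$ one has $\phi^\infty(\rho,\ww)=\rho\,\phi^\infty(1,\ww/\rho)$, so it suffices to set $\varphi^\infty(\uu):=\phi^\infty(1,\uu)$. Then $\varphi^\infty\ge 0$, $\varphi^\infty$ is convex (it is the restriction of the convex function $\phi^\infty$ to the slice $\{\rho=1\}$), and $\varphi^\infty$ is $p$-homogeneous by the previous step with $\rho=1$; this already yields all the asserted properties except the strict positivity.

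The remaining point, and the one I expect to be the main obstacle, is that $\varphi^\infty(\uu)>0$ for $\uu\ne 0$, because this is the only place where concavity of the mobility (encoded in the concavity of $\tilde\phi$ in its first variable, Proposition~\ref{prop:phinorm}) must be used. Writing the recession as a limit along the base point $(\rho_0,0)$, with $\rho_0$ such that $\{\rho_0\}\times\R^d\subseteq\dom(\phi)$ (so $\phi(\rho_0,0)=0$), gives $\varphi^\infty(\uu)=\lim_{t\to+\infty}\phi(\rho_0+t,\,t\uu)/t$; if the effective domain does not extend above $\rho_0$ this limit is immediately $+\infty$, so assume it does. Fix $\zz$ with $\langle\uu,\zz\rangle>0$ lying in the relevant domain slice. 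The duality inequality $\phi(\rho,\ww)^{1/p}\ge\langle\ww,\zz\rangle/\tilde\phi(\rho,\zz)^{1/q}$ of Proposition~\ref{prop:phinorm}, together with the fact that $\tilde\phi(\cdot,\zz)$ is nonnegative and concave, hence bounded above by an affine function $c_1+c_2 t$ on $[\rho_0,+\infty)$, yields
\[
\frac{\phi(\rho_0+t,\,t\uu)}{t}\;\ge\;\frac{t^{p}\,\langle\uu,\zz\rangle^{p}}{t\,(c_1+c_2 t)^{p/q}}\,,
\]
and the right-hand side tends to $\langle\uu,\zz\rangle^{p}/c_2^{p/q}>0$ as $t\to+\infty$ because $p-1-p/q=0$ (with value $+\infty$ if $c_2=0$). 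Hence $\varphi^\infty(\uu)>0$. The delicate part is precisely this matching of exponents: it is the at-most-linear growth of the concave $\tilde\phi(\cdot,\zz)$ that makes the $t^{p}$ growth of the numerator win against the denominator, so that strict positivity survives in the limit; without concavity $\tilde\phi(\cdot,\zz)$ could grow faster and the bound would degenerate.
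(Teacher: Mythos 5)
Your proof is correct and follows essentially the same route as the paper: the $p$-homogeneity of $\phi^\infty$ is obtained by pushing (F1) through the recession limit (the paper does this directly at the base point $(\bar\rho,0)$, you package it as $(\phi\circ T_\lambda)^\infty=\phi^\infty\circ T_\lambda$ and $(c\phi)^\infty=c\phi^\infty$, which amounts to the same computation), and the representation $\phi^\infty(\rho,\ww)=\rho\varphi^\infty(\ww/\rho)$ is read off from the positive $1$-homogeneity of the recession functional. The paper disposes of the strict positivity of $\varphi^\infty$ with the single sentence ``follows from the definition of the class $\mathcal A_p$,'' so your duality estimate $\phi(\rho,\ww)^{1/p}\ge\langle\ww,\zz\rangle/\tilde\phi(\rho,\zz)^{1/q}$ combined with the at-most-affine growth of the concave function $\tilde\phi(\cdot,\zz)$ (the exponent bookkeeping $p-1=p/q$) is a legitimate and correctly executed filling-in of that omitted detail rather than a different method.
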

\begin{proof}
We notice that $(0,0)$ may not belong in general to $\mathrm{dom}(\phi)$, however we have:
\begin{eqnarray*}
\phi^\infty(\rho,\ww)&:=&
\lim_{\lambda\to+\infty}\frac{\phi(\bar\rho+\lambda\rho,\lambda \ww)-\phi(\bar\rho,0)}\lambda
=\displaystyle\lim_{\lambda\to+\infty}\frac{\phi(\bar\rho+\lambda\rho,\lambda \ww)}\lambda
=\lim_{\lambda\to+\infty}\lambda^{p-1}\phi(\bar\rho+\lambda\rho,\ww),
\end{eqnarray*}
for every $\bar\rho\in\mathbb R$ such that $(\bar\rho,0)\in\mathrm{dom}(\phi)$, and
such $\bar\rho$ exists by definition of the class $\mathcal A_p$.
Hence $\phi^\infty$ is still $p$-homogeneous with respect to $\ww$.
The other statement follows from the definition of the class $\mathcal A_p$.
\end{proof}

We notice that in the case of $\phi\in\AdmissibleDensityab$ we have $\phi^\infty(0,0)=0$ and $\phi^\infty(\rho,\ww)=+\infty$ for $(\rho,\ww)\ne(0,0)$.

One of the most interesting example of admissible density function in $\AdmissibleDensityab$ is the following:
\begin{definition}\label{def:phih}
Let $p>1$ and $q$ its conjugate exponent.
Let $h:\mathbb R\to[0,+\infty)\cup\{-\infty\}$ be an u.s.c. concave function with $\Int(\dom(h))=(a,b)$, $a,b\in\R$, $a<b$,
$h(\rho)>0$ for every $\rho\in(a,b)$.
Define $\tilde\phi_h(\rho,\zz)=h(\rho)|\zz|^q$ on $\mathbb R\times\mathbb R^d$. We have that this is a concave-convex map
which is $q$-homogeneous with respect to $\zz$.
Hence, it is the marginal conjugate of the l.s.c. convex map
$\phi_h\in\AdmissibleDensityab$ defined by
\begin{equation}\label{scase}
\phi_h(\rho,\ww)=
\begin{cases}
\displaystyle\frac{|\ww|^p}{h(\rho)^{p-1}}&\textrm{if }\rho\in\mathrm{dom}(h),\,h(\rho)\ne 0\\ &\\
0&\textrm{if }h(\rho)=0,\,\ww=0\\ &\\
\displaystyle +\infty&\textrm{if }h(\rho)=0,\,\ww\ne0\textrm{ or } h(\rho)=-\infty.
\end{cases}
\end{equation}
Such function $h$ is called \emph{mobility function}.
\end{definition}

The following proposition shows that every admissible function $\phi$ is bounded from above by an admissible function of
the type \eqref{scase}.
\begin{proposition}\label{prop:phile}
If $\phi\in\AdmissibleDensityab$, then there exists a concave function $h$ such that $\Int(\dom(h))=(a,b)$, $h(r)>0$ for every $r\in(a,b)$ and
\begin{equation}\label{philphih}
    \phi(r,\ww)\leq\phi_h(r,\ww).
\end{equation}
\end{proposition}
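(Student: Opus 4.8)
The plan is to exploit the duality between $\phi$ and its marginal conjugate $\tilde\phi$ recorded in Proposition \ref{prop:phinorm}, and to read the optimal mobility directly off $\tilde\phi$. The starting point is that, for the candidate densities of Definition \ref{def:phih}, the marginal conjugate is $\tilde\phi_h(r,\zz)=h(r)|\zz|^q$. For each fixed $r$ with $\{r\}\times\R^d\subset\dom(\phi)$, both $\ww\mapsto\phi(r,\ww)^{1/p}$ and $\zz\mapsto\tilde\phi(r,\zz)^{1/q}$ are norms on $\R^d$, mutually dual by Proposition \ref{prop:phinorm}(1). Since passing to dual norms reverses inequalities, the desired bound \eqref{philphih}, namely $\phi(r,\ww)\le\phi_h(r,\ww)$ for all $\ww$, is equivalent to $\tilde\phi(r,\zz)\ge h(r)|\zz|^q$ for all $\zz$, and by $q$-homogeneity in $\zz$ this is in turn equivalent to
\[
h(r)\le m(r):=\inf_{|\zz|=1}\tilde\phi(r,\zz).
\]
Hence it suffices to show that $m$ is itself an admissible mobility, i.e.\ finite, strictly positive and concave on $(a,b)$ with $\Int(\dom(m))=(a,b)$; one then simply takes $h:=m$.

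First I would verify finiteness and positivity on $(a,b)$. Finiteness follows from Proposition \ref{prop:phinorm}(3): on any compact subinterval one has $\tilde\phi(r,\zz)\le C|\zz|^q$, so $m(r)\le C<+\infty$. Positivity follows from Proposition \ref{prop:phinorm}(1)--(2): for $r\in(a,b)$ the map $\zz\mapsto\tilde\phi(r,\zz)^{1/q}$ is a genuine norm, hence continuous and strictly positive on the compact unit sphere, so it attains a positive minimum there and $m(r)>0$. The concavity of $m$ is then immediate and is the conceptual heart of the argument: for each fixed $\zz$ the function $r\mapsto\tilde\phi(r,\zz)$ is concave, because $\tilde\phi$ is a concave-convex function (concave in its first variable); being an infimum of a family of concave functions, $m$ is concave on $(a,b)$. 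Finally, for $r\notin[a,b]$ the lower extension forces $\tilde\phi(r,\cdot)\equiv-\infty$, hence $m(r)=-\infty$, which together with the finiteness on $(a,b)$ gives $\Int(\dom(m))=(a,b)$. Setting $h:=m$ (replaced, if one insists on upper semicontinuity, by its u.s.c.\ concave envelope, which does not change the values on the open interval where $m$ is already continuous) produces a mobility function with exactly the required properties, and the equivalence above yields \eqref{philphih}.

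I expect the only genuinely delicate points to be the two invocations of duality and the behaviour at the endpoints, rather than the core construction. The duality reversal — that $\phi\le\phi_h$ for all $\ww$ is equivalent to the reversed inequality between the marginal conjugates — requires that $\tilde\phi(r,\cdot)$ be a finite, nondegenerate norm so that biduality applies, which is precisely what Proposition \ref{prop:phinorm} guarantees on $(a,b)$. At the endpoints $r\in\{a,b\}$, where $\phi_h$ may equal $+\infty$ off $\ww=0$, the inequality \eqref{philphih} is recovered from the $p$-homogeneity condition (F1), which forces $\phi(r,0)=0$, together with lower semicontinuity of $\phi$; outside $[a,b]$ both sides are $+\infty$ by \eqref{scase}. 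The substantive estimate on the open interval $(a,b)$ is, by contrast, essentially automatic once $m$ has been identified as the correct candidate.
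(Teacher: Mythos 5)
Your construction is exactly the paper's: define $h(r):=\inf_{|\zz|=1}\tilde\phi(r,\zz)$, deduce $\tilde\phi(r,\zz)\ge h(r)|\zz|^q$ by $q$-homogeneity, and pass back through the partial Legendre duality of Proposition \ref{prop:phinorm} (the paper uses the representation formula \eqref{phileg} where you invoke reversal of dual norms, which is the same step). Your proof is correct and, if anything, more careful than the paper's, since you also verify explicitly the concavity, positivity and domain of $h$, which the paper leaves implicit.
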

\begin{proof}
Let us define
$$h(r):=\inf_{|\zz|=1}\tilde\phi(r,\zz),$$
where $\tilde\phi$ is defined in \eqref{def:phitilde}.
By the $q$-homogeneity of $\tilde\phi$ we have
$$\tilde\phi(r,\zz)\geq h(r)|\zz|^q.$$
Then, by the representation \eqref{phileg} for $\phi$ and $\phi_h$, we obtain
$$\frac{1}{p}\phi(r,\ww)\leq\sup_{\zz\in\R^d}\Big\{\la\ww,\zz\ra-\frac{1}{q}h(r)|\zz|^q \Big\}
    = \frac{1}{p}\phi_h(r,\ww).$$
\end{proof}

\subsection{Action functional}\label{mt}

Given an admissible action density function $\phi$ and a reference measure $\gamma$ on $\R^d$,
we can define the corresponding action functional.

\begin{definition}[$\phi$-Action functional]\label{def:Phi}
Let $\gamma\in\mathcal M^+_{\mathrm{loc}}(\mathbb R^d)$ be a reference measure and $\phi\in\mathcal A_p$.\\
For every $\mu\in\mathcal M_{\mathrm{loc}}(\mathbb R^d)$ and $\nnu\in\mathcal M_{\mathrm{loc}}(\mathbb R^d;\mathbb R^d)$
such that $\mathrm{supp}(\mu)$ and $\mathrm{supp}(\nnu)$ are contained in $\mathrm{supp}(\gamma)$ we can write
their Lebesgue decomposition $\mu=\rho\gamma+\mu^\perp$, $\nnu=\ww\gamma+\nnu^\perp$.
Introducing a nonnegative Radon measure
$\sigma\in\mathcal M^+_{\mathrm{loc}}(\mathbb R^d)$ such that
$\mu^\perp\ll\sigma$ and $\nnu^\perp\ll\sigma$
(e.g. take $\sigma=|\mu^\perp|+|\nnu^\perp|$)
and using the notation
$\mu^\perp=\rho^\perp\sigma$ and $\nnu^\perp=\ww^\perp\sigma$,
we define the action functional $\Phi$ associated to $\phi$ by
$$\Phi(\mu,\nnu|\gamma)=\Phi^a(\mu,\nnu|\gamma)+\Phi^\infty(\mu,\nnu|\gamma)
:=\int_{\mathbb R^d}\phi(\rho,\ww)\,d\gamma+\int_{\mathbb R^d}\phi^\infty(\rho^\perp,\ww^\perp)\,d\sigma.$$
Since $\phi^\infty$ is $1$-homogeneous, the definition does not depend on $\sigma$.
\end{definition}

We collect in the following theorem some properties of convex functionals on measures.
The proof can be found in \cite{DNS} (see also \cite{AFP} for functionals defined on measures).
\begin{theorem}[Properties of $\Phi$]\label{convfunc}
Let $\phi\in\mathcal A_p$ and $\Phi$ as in Definition \ref{def:Phi}.
\begin{enumerate}
\item \emph{Lower semicontinuity}.
If three sequences $(\gamma_n)_{n\in\mathbb N}\subset\mathcal M_{\mathrm{loc}}^+(\mathbb R^k)$,
$(\mu_n)_{n\in\mathbb N}\subset\mathcal M_{\mathrm{loc}}(\mathbb R^d)$,
$(\nnu_n)_{n\in\mathbb N}\subset\mathcal M_{\mathrm{loc}}(\mathbb R^d,\mathbb R^d)$
weakly* converge to $\gamma$, $\mu$, $\nnu$ respectively, then $\Phi(\mu,\nnu|\gamma)\le\displaystyle\liminf_{n\to+\infty}\Phi(\mu_n,\nnu_n|\gamma_n)$.
\item \emph{Monotonicity w.r.to $\gamma$}.
Assume that $(0,0)\in\dom(\phi)$
(in this case by homogeneity we have $\phi(0,0)=0$)
and let $\gamma_1,\gamma_2\in \mathcal M_{\mathrm{loc}}^+(\mathbb R^k)$ be
such that $\gamma_1\le\gamma_2$.
Then $\Phi(\mu,\nnu|\gamma_2)\le\Phi(\mu,\nnu|\gamma_1)$
for every $(\mu,\nnu)$ such that
$\mathrm{supp}(\mu) \cup \mathrm{supp}(\nnu)\subseteq\mathrm{supp}(\gamma_i)$, $i=1,2$.
\item \emph{Monotonicity w.r.to convolution}.
Let $k\in C^{\infty}_c(\mathbb R^d)$ be a convolution
kernel satisfying $k(x)\ge 0$ for all $x\in\mathbb R^d$ and $\int_{\mathbb R^d}k(x)\,dx=1$. Then
$\Phi(\mu\ast k, \nnu \ast k|\gamma\ast k)\le \Phi(\mu,\nnu|\gamma)$.
\end{enumerate}
\end{theorem}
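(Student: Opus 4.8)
The plan is to reduce all three assertions to a single object: the jointly positively $1$-homogeneous convex envelope of $\phi$. First I would fix a common dominating measure, e.g.\ $\theta:=\gamma+|\mu|+|\nnu|$, and write $\gamma=g\theta$, $\mu=m\theta$, $\nnu=\nn\theta$ with $g\ge 0$. Introducing the perspective function
\[
H(g,m,\nn):=\begin{cases} g\,\phi(m/g,\nn/g) & \text{if } g>0,\\ \phi^\infty(m,\nn) & \text{if } g=0,\end{cases}
\]
a direct computation from the Lebesgue decomposition of Definition \ref{def:Phi} gives $\Phi(\mu,\nnu|\gamma)=\int_{\Rd}H(g,m,\nn)\,\d\theta$, and since $H$ is positively $1$-homogeneous this value is independent of the choice of $\theta$. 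The perspective of a convex l.s.c.\ function is convex and l.s.c., and its behaviour at $g=0$ is governed exactly by the recession function $\phi^\infty$; concavity of the mobility, through the dual formula \eqref{phileg}, is what guarantees these properties. Thus $\Phi$ is a standard convex integral functional of measures in the sense of \cite{AFP}.

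For the lower semicontinuity (item 1) I would exploit the $1$-homogeneity of $H$: its convex conjugate is the indicator of the closed convex set $C:=\{(\alpha,\beta,\bb)\in\R\times\R\times\Rd:\alpha g+\beta m+\la\bb,\nn\ra\le H(g,m,\nn)\ \forall(g,m,\nn)\}$, so the Fenchel--Moreau identity yields the dual representation
\[
\Phi(\mu,\nnu|\gamma)=\sup\Big\{\int_\Rd\alpha\,\d\gamma+\int_\Rd\beta\,\d\mu+\int_\Rd\la\bb,\d\nnu\ra\Big\},
\]
the supremum being taken over continuous, compactly supported triples $(\alpha,\beta,\bb)$ with values in $C$. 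Each functional inside the supremum is weakly$^*$ continuous in $(\gamma,\mu,\nnu)$, and a supremum of continuous functionals is l.s.c., so joint weak$^*$ lower semicontinuity is immediate. The delicate point, and the main obstacle of the whole argument, is precisely this integral representation: one must verify that restricting the duality to \emph{continuous} test triples does not lower the supremum, which is the content of the representation theory for convex integrands on measures (see \cite{AFP}) and requires matching the recession part $\phi^\infty$ with the boundary behaviour of $H$ at $g=0$.

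The remaining two items then follow cheaply. For the monotonicity in $\gamma$ (item 2), the hypothesis $(0,0)\in\dom(\phi)$ forces $\phi(0,0)=0$, and convexity gives the sub-homogeneity $\phi(t\rho,t\ww)\le t\,\phi(\rho,\ww)$ for $t\in[0,1]$; applying this with $t=g_1/g_2$ shows that $g\mapsto H(g,m,\nn)$ is non-increasing. Taking $\theta$ to dominate $\gamma_2$ (hence $\gamma_1$), $\mu$ and $\nnu$, the corresponding densities satisfy $g_1\le g_2$ $\theta$-a.e., so integrating the pointwise inequality $H(g_2,m,\nn)\le H(g_1,m,\nn)$ gives $\Phi(\mu,\nnu|\gamma_2)\le\Phi(\mu,\nnu|\gamma_1)$. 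For the convolution estimate (item 3), I would use the dual representation together with the adjoint identity $\int f\,\d(\lambda\ast k)=\int(f\ast\check k)\,\d\lambda$, where $\check k(x)=k(-x)$: since the constraint set $C$ does not depend on the point and is convex, and $k$ is a probability kernel, convolution averages any admissible triple $(\alpha,\beta,\bb)\in C$ into a triple $(\alpha\ast\check k,\beta\ast\check k,\bb\ast\check k)$ still taking values in $C$. Every competitor for $\Phi(\mu\ast k,\nnu\ast k|\gamma\ast k)$ is thereby turned into a competitor for $\Phi(\mu,\nnu|\gamma)$ of equal value, and passing to the supremum yields the asserted inequality.
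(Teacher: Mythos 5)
Your argument is correct and coincides with the proof this paper relies on: the theorem is stated here without proof, with a reference to \cite{DNS} (and \cite{AFP}), where exactly this route is taken — rewriting $\Phi$ as the integral of the positively $1$-homogeneous perspective integrand built from $\phi$ and $\phi^\infty$, using its dual representation as a supremum of pairings against continuous selections of the polar convex set to get joint weak$^*$ lower semicontinuity, and deducing the two monotonicity statements from the sub-homogeneity $\phi(t\rho,t\ww)\le t\,\phi(\rho,\ww)$ and from Jensen's inequality for the convolution kernel. Nothing essential is missing from your sketch beyond the (correctly flagged) appeal to the standard representation theory for convex functionals of measures.
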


The following example shows that the statement on
monotonicity with respect to the reference measure
may fail if $(0,0)\not\in \dom(\phi)$.
\begin{example}[Non-monotonicity]
Let $d=1$. We define $\phi:\mathbb R\times\mathbb R\to\mathbb R\cup\{+\infty\}$ to be
$\phi(r,\vv)=|\vv|^2$ if $r\in [3/2,2]$ and $+\infty$ elsewhere.
Define $\gamma_2=3/2\gamma_1=\chi_{[1,2]}(x)\mathscr L^1$ and set $\mu=\nnu=\gamma_2=3/2\gamma_1$.
Then
$$\Phi(\mu,\nnu|\gamma_2)=\int_{\mathbb R} \phi(1,1)\,d\gamma_2=+\infty.$$
$$\Phi(\mu,\nnu|\gamma_1)=\int_{\mathbb R} \phi(3/2,3/2)\,d\gamma_1=\frac{3}{2}.$$
Hence $\gamma_1<\gamma_2$ but $\Phi(\mu,\nnu|\gamma_1)<\Phi(\mu,\nnu|\gamma_2)$.
\end{example}

When  $\phi\in\AdmissibleDensityab$, the finiteness of the
corresponding action functional $\Phi(\mu,\nnu|\gamma)$, force
the absolute continuity of $\mu$ with respect to $\gamma$ and a
boundedness of the density of $\mu$ with respect to $\gamma$ .
We state this important property in the following proposition.

\begin{proposition}\label{assc1}
Let $\phi\in\AdmissibleDensityab$ and
$\gamma\in\mathcal M^+_{\mathrm{loc}}(\mathbb R^d)$ a fixed reference measure.
Let $\mu\in\mathcal M_{\mathrm{loc}}(\mathbb R^d)$, $\nnu\in\mathcal M_{\mathrm{loc}}(\mathbb R^d;\mathbb R^d)$ be such that $\Phi(\mu,\nnu|\gamma)<+\infty$.
Then $\mu \ll \gamma$, $\nnu \ll \gamma$ and
\begin{equation}\label{fi}
    \Phi(\mu,\nnu|\gamma)=\Phi^a(\mu,\nnu|\gamma)=\int_{\mathbb R^d}\phi(\rho,\ww)\,d\gamma,
\end{equation}
where $\mu=\rho\gamma$ and $\nnu=\ww\gamma$.
Moreover we have
\begin{equation}
    a\le \rho(x)\le b \qquad \text{for } \gamma \text{-a.e. } x\in\R^d.
\end{equation}
\end{proposition}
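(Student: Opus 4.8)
The plan is to read off both conclusions from the explosive behaviour of the recession function $\phi^\infty$ recorded just before Definition \ref{def:phih}: for $\phi\in\AdmissibleDensityab$ one has $\phi^\infty(0,0)=0$ while $\phi^\infty(\rho,\ww)=+\infty$ for every $(\rho,\ww)\ne(0,0)$. Since $\phi\ge 0$, this recession function is nonnegative, so both summands $\Phi^a$ and $\Phi^\infty$ in Definition \ref{def:Phi} are nonnegative; hence the hypothesis $\Phi(\mu,\nnu|\gamma)<+\infty$ forces each of them to be finite separately. The proof then splits into two independent steps: first kill the singular parts, then bound the density.

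First I would dispose of the singular parts. Writing the Lebesgue decompositions $\mu=\rho\gamma+\mu^\perp$, $\nnu=\ww\gamma+\nnu^\perp$ and $\mu^\perp=\rho^\perp\sigma$, $\nnu^\perp=\ww^\perp\sigma$ exactly as in Definition \ref{def:Phi}, finiteness of
$$\Phi^\infty(\mu,\nnu|\gamma)=\int_{\R^d}\phi^\infty(\rho^\perp,\ww^\perp)\,d\sigma$$
together with the fact that $\phi^\infty=+\infty$ off the origin forces the integrand to be finite $\sigma$-a.e., i.e.\ $(\rho^\perp(x),\ww^\perp(x))=(0,0)$ for $\sigma$-a.e.\ $x$. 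Consequently $\mu^\perp=\rho^\perp\sigma=0$ and $\nnu^\perp=\ww^\perp\sigma=0$, which is precisely $\mu\ll\gamma$ and $\nnu\ll\gamma$; moreover $\Phi^\infty=0$, so that $\Phi=\Phi^a=\int_{\R^d}\phi(\rho,\ww)\,d\gamma$, giving \eqref{fi}.

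It then remains to bound the density. At this point $\int_{\R^d}\phi(\rho,\ww)\,d\gamma<+\infty$ with $\phi\ge 0$, so $\phi(\rho(x),\ww(x))<+\infty$ for $\gamma$-a.e.\ $x$; in other words $(\rho(x),\ww(x))\in\dom(\phi)$ for $\gamma$-a.e.\ $x$. Since $\dom(\phi)$ is convex with $\Int(\dom(\phi))=(a,b)\times\R^d$, its closure equals $\overline{(a,b)\times\R^d}=[a,b]\times\R^d$, whence $\dom(\phi)\subseteq[a,b]\times\R^d$. Projecting onto the first coordinate yields $a\le\rho(x)\le b$ for $\gamma$-a.e.\ $x$, as claimed.

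The argument is essentially immediate once the recession identity is granted, so there is no serious analytic obstacle; the only point to verify with care is the elementary measure-theoretic step that a nonnegative integrand with finite integral must be finite almost everywhere, which is what licenses the vanishing of the singular parts. One should also note that, by the $1$-homogeneity of $\phi^\infty$, the value of $\Phi^\infty$ is independent of the auxiliary dominating measure $\sigma$, so the conclusion $(\rho^\perp,\ww^\perp)=(0,0)$ does not depend on that choice.
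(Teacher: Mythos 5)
Your proof is correct and follows essentially the same route as the paper's: finiteness of $\Phi^\infty$ combined with the fact that $\phi^\infty$ vanishes only at the origin kills the singular parts, and finiteness of the absolutely continuous part confines $\rho$ to $[a,b]$. Your only addition is to spell out the last step (via convexity of $\dom(\phi)$ and $\Int(\dom(\phi))=(a,b)\times\R^d$), which the paper leaves implicit.
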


\begin{proof}
Since $\phi\in\AdmissibleDensityab$ and $p>1$, by the definition of $\phi^\infty$
and Lemma \ref{homog}, it is easy to check that
$\phi^\infty(\rho,\ww) = +\infty$ if $(\rho,\ww)\not=(0,0)$,
and $\phi^\infty(0,0)=0$.
If $\mu=\rho\gamma+\mu^\perp$, $\nnu=\ww\gamma+\nnu^\perp$ and
$\sigma\in\mathcal M^+_{\mathrm{loc}}(\mathbb R^d)$ such that $\mu^\perp=\rho^\perp\sigma$ and $\nnu^\perp=\ww^\perp\sigma$,
we can represent
$$\Phi^\infty(\mu,\nnu|\gamma)=\int_{\mathbb R^d}\phi^\infty(\rho^\perp,\ww^\perp)\,d\sigma.$$
In order to have $\Phi^\infty(\mu,\nnu|\gamma)<\infty$, we must have $\rho^\perp(x)=0$ and $\ww^\perp(x)=0$ for $\sigma$-a.e. $x\in\mathbb R^d$.
This implies that $\rho\ll\gamma$ and $\nnu\ll\gamma$ and \eqref{fi} holds.
The last statement follows from $\int_{\R^d}\phi(\rho,\ww)\,d\gamma <+\infty$.
\end{proof}

\subsection{Continuity equation}\label{ce}
In this Subsection we collect the basic facts on the measure solutions of the
continuity equation. It is an adaptation of \cite{DNS} and \cite{AGS}, with the
novelty that here we consider signed measures instead of non-negative measures.

\begin{definition}
Given $T>0$, we consider the \emph{continuity equation}:
\begin{equation}\label{cont1}
\partial_t\mu_t+\mathrm{div}(\nnu_t)=0,\hspace{2cm}\textrm{in }\mathbb R^d\times(0,T),
\end{equation}
where $\mu_t,\nnu_t$ are Borel families of measures in $\mathcal M_{\mathrm{loc}}(\R^d)$ and $\mathcal M_{\mathrm{loc}}(\mathbb R^d;\mathbb R^d)$
respectively, defined for $t\in(0,T)$ satisfying
\begin{equation}\label{fin1}
\int_0^T|\mu_t|(B(0,R))\,dt<+\infty,\hspace{2cm}V_R:=\int_0^T|\nnu_t|(B(0,R)\,dt<+\infty \hspace{1cm} \forall \,\, R>0,
\end{equation}
and the equation \eqref{cont1} holds in the sense of distributions, i.e.
\begin{equation}\label{cont2}
\int_0^T\int_{\mathbb R^d}\partial_t\zeta(x,t)\,d\mu_t(x)\,dt+\int_0^T\int_{\mathbb R^d}\nabla_x(\zeta(x,t))\,d\nnu_t(x)\,dt=0
,\hspace{1cm}\textrm{for every }\zeta\in C^1_c(\mathbb R^d\times(0,T)).
\end{equation}
We recall that, thanks to the disintegration theorem, we can identify $(\nnu_t)_{t\in[0,T]}$
with the measure $\nnu=\int_0^T\nnu_t\,dt\in\mathcal M_{\mathrm{loc}}(\mathbb R^d\times(0,T);\mathbb R^d)$ defined by:
$$\langle \nnu,\zeta\rangle=\int_0^T\left(\int_{\mathbb R^d}\zeta(x,t)d\nnu_t\right)\,dt,
\hspace{1cm}\forall\zeta\in C^0_c(\mathbb R^d\times(0,T);\mathbb R^d).$$
Similarly, we can identify $(\mu_t)_{t\in[0,T]}$ with a measure
$\mu=\int_0^T\mu_t\,dt\in\mathcal M_{\mathrm{loc}}(\mathbb R^d\times(0,T))$.
\end{definition}

\begin{lemma}
Let $T>0$ and $(\mu_t,\nnu_t)_{t\in(0,T)}$ be a Borel family of measures satisfying (\ref{fin1}) and (\ref{cont2}).
Then there exists a unique weakly* continuous curve
$[0,T]\ni t\mapsto\tilde\mu_t\in\mathcal M_{\mathrm{loc}}(\mathbb R^d)$
such that $\mu_t=\tilde\mu_t$ for $\mathcal L^1$-a.e. $t\in(0,T)$;
if $\zeta\in C^1_c(\mathbb R^d\times(0,T))$ and $t_1,t_2\in[0,T]$ with $t_1\le t_2$ we have:
$$\int_{\mathbb R^d}\zeta(t_2,x)\,d\mu_{t_2}-\int_{\mathbb R^d}\zeta(t_1,x)\,d\mu_{t_1}=
\int_{t_1}^{t_2}\left(\int_{\mathbb R^d}\partial_t\zeta(t,x)\,d\mu_t(x)+\int_{\mathbb R^d}\nabla_x(\zeta(t,x))\,d\nnu_t(x)\right)\,dt.$$
Moreover if $\tilde\mu_s(\mathbb R^d) \in \R$ for some $s\in[0,T]$ and $\displaystyle\lim_{R\to+\infty}R^{-1}V_R=0$, then the total mass
$\tilde\mu_t(\mathbb R^d)\in\R$ and is constant.
\end{lemma}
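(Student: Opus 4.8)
The plan is to establish the existence and weak$^*$ continuity of $\tilde\mu_t$ first, and then separately handle the mass conservation claim. For the first part, I would exploit the distributional identity \eqref{cont2}: fixing a test function $\psi\in C^1_c(\R^d)$ and a smooth temporal cutoff, the map $t\mapsto\int_{\R^d}\psi\,d\mu_t$ is, after modification on a Lebesgue-null set, absolutely continuous with derivative $t\mapsto\int_{\R^d}\nabla_x\psi\,d\nnu_t$, which is integrable by \eqref{fin1}. The key technical point is to pass from a dense countable family of such test functions to a single null set valid for all $\psi\in C^1_c$ (hence, by density and the local bounds \eqref{fin1}, all $\psi\in C^0_c$), yielding a unique weakly$^*$ continuous representative $\tilde\mu_t$. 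The integral formula relating $\mu_{t_1}$ and $\mu_{t_2}$ then follows by a standard approximation argument in $\zeta$.

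For the mass conservation statement, the difficulty is that the constant function $1$ is not compactly supported, so I cannot simply plug $\zeta\equiv 1$ into the integral identity. The plan is to test against a cutoff: choose $\eta_R\in C^1_c(\R^d)$ with $\eta_R\equiv 1$ on $B(0,R)$, $\eta_R\equiv 0$ outside $B(0,2R)$, $0\le\eta_R\le 1$, and $|\nabla\eta_R|\le C/R$. Applying the integral formula with $\zeta(t,x)=\eta_R(x)$ between times $s$ and an arbitrary $t$ gives
\begin{equation*}
\int_{\R^d}\eta_R\,d\tilde\mu_t-\int_{\R^d}\eta_R\,d\tilde\mu_s
=\int_s^t\int_{\R^d}\nabla_x\eta_R\,d\nnu_\tau\,d\tau.
\end{equation*}

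The main obstacle is controlling the right-hand side as $R\to+\infty$. Here I would bound it in absolute value by $\|\nabla\eta_R\|_\infty\int_0^T|\nnu_\tau|(B(0,2R))\,d\tau\le (C/R)\,V_{2R}$, and the hypothesis $\lim_{R\to+\infty}R^{-1}V_R=0$ forces this to vanish. On the left-hand side, I must justify that $\int_{\R^d}\eta_R\,d\tilde\mu_t\to\tilde\mu_t(\R^d)$; since $\tilde\mu_s(\R^d)\in\R$ by assumption, this requires a uniform-in-$R$ integrability argument to rule out mass escaping to infinity. I would first establish that $\int\eta_R\,d\tilde\mu_t$ is Cauchy as $R\to+\infty$ (using the vanishing right-hand side to compare different radii), deducing that $\tilde\mu_t(\R^d)$ exists and is finite for every $t$, and then pass to the limit in the displayed identity to conclude $\tilde\mu_t(\R^d)=\tilde\mu_s(\R^d)$ for all $t$. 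This comparison step, handling both the finiteness of the total mass and its constancy simultaneously through the decay of $R^{-1}V_R$, is the crux of the argument.
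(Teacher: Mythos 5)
The paper itself gives no proof of this lemma; it is presented as an adaptation of \cite{DNS} (Lemma 4.1) and \cite{AGS} (Lemma 8.1.2), and your proposal follows exactly that standard route: $W^{1,1}(0,T)$ regularity of $t\mapsto\int\psi\,d\mu_t$ for $\psi\in C^1_c$, a common null set for a countable dense family, and a cutoff argument with $|\nabla\eta_R|\le C/R$ combined with $R^{-1}V_R\to 0$ for the mass statement. The skeleton is right and the cutoff estimate $\bigl|\int\eta_R\,d\tilde\mu_t-\int\eta_R\,d\tilde\mu_s\bigr|\le (C/R)V_{2R}$ together with your Cauchy-in-$R$ comparison is precisely the intended mechanism.

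The point you gloss over is exactly the one the paper flags as the novelty of this subsection: the measures are \emph{signed}. This matters in two places. First, in the extension from the countable family in $C^1_c$ to all of $C^0_c$: the identity $\tilde\mu_t(\psi)=\tilde\mu_{t_0}(\psi)+\int_{t_0}^t\int\nabla\psi\,d\nnu_\tau\,d\tau$ only bounds $|\tilde\mu_t(\psi)|$ in terms of $\|\nabla\psi\|_\infty$, so a priori $\tilde\mu_t$ is a distribution of order one at the exceptional times. For nonnegative measures one restores a bound by $\|\psi\|_\infty$ using positivity of the continuous representative; for signed measures you need a substitute, e.g.\ select $s_n\to t$ outside the null set along which $|\mu_{s_n}|(B_R)$ stays bounded for each $R$ (possible since $\int_0^T|\mu_s|(B_R)\,ds<\infty$, via a diagonal argument) and use lower semicontinuity of the total variation under weak$^*$ convergence to conclude $\tilde\mu_t\in\mathcal M_{\loc}$. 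Second, in the mass statement: your Cauchy argument shows that $\lim_{R\to\infty}\int\eta_R\,d\tilde\mu_t$ exists and equals $\tilde\mu_s(\R^d)$, but for a signed measure the existence of this limit does \emph{not} imply $|\tilde\mu_t|(\R^d)<+\infty$ (cancellations between the positive and negative parts at infinity can produce a convergent principal value with infinite total variation, and the continuity equation only controls the combination $\int\nabla\eta_R\cdot d\nnu_\tau$, not the two parts separately). So either you interpret ``$\tilde\mu_t(\R^d)\in\R$'' in this limiting sense, or you must say explicitly why finiteness of $|\tilde\mu_t|(\R^d)$ is available in the setting where the lemma is applied. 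Neither issue destroys the argument, but both need to be addressed for the proof to be complete in the signed setting.
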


\begin{definition}[Solution of continuity equation]
Let $T>0$, we denote by $\mathcal {CE}(0,T)$ the set of time-dependent measures $(\mu_t,\nnu_t)_{t\in[0,T]}$ such that
\begin{enumerate}
\item $t\mapsto\mu_t$ is weakly* continuous in $\mathcal M_{\mathrm{loc}}(\R^d)$ satisfying \eqref{fin1};
\item $(\nnu_t)_{t\in [0,T]}$ is a Borel family satisfying \eqref{fin1};
\item $(\mu,\nnu)$ satisfies \eqref{cont2}.
\end{enumerate}
Given $\mu^1,\mu^2\in \mathcal M_{\mathrm{loc}}(\R^d)$, we denote the set of solutions connecting $\mu^1$ to $\mu^2$ (possibly empty) by
$\mathcal {CE}(0,T, \mu^1 \to \mu^2)=\{(\mu,\nnu)\in\mathcal {CE}(0,T):\mu_0=\mu^1,\mu_T=\mu^2 \}$.
Given $\gamma \in \mathcal M^+_{\mathrm{loc}}(\R^d)$ reference measure and $\phi \in \mathcal A_p$, we denote by
$\mathcal {CE}_{\phi,\gamma}(0,T; \mu^1 \to \mu^2)=\{(\mu,\nnu)\in\mathcal {CE}(0,T; \mu^1 \to \mu^2):\int_0^T\Phi(\mu_t,\nnu_t|\gamma)\,dt <+\infty \}$,
which is the set of solutions of the continuity equation
connecting $\mu^1$ to $\mu^2$ with finite energy.
We also use the notation
$\mathcal {CE}_{\phi,\gamma}(0,T):=\{(\mu,\nnu)\in\mathcal {CE}(0,T):\int_0^T\Phi(\mu_t,\nnu_t|\gamma)\,dt <+\infty \}$.
\end{definition}

\begin{lemma}\label{lemma:gluing}
The following properties hold:
\begin{enumerate}
\item (Time rescaling) Let $\tau:[0,T']\to [0,T]$ be a strictly increasing absolutely continuous map with absolutely continuous inverse $s=\tau^{-1}$.
Then $(\mu,\nnu)$ is a distributional solution of (\ref{cont2}) iff $(\hat\mu,\hat\nnu)$, where
$\hat\mu=\mu\circ\tau$ and $\hat\nnu=\tau'(\nnu\circ\tau)$ is a distributional solution of (\ref{cont2}) on $(0,T')$.
\item (Gluing solution) Let $(\mu^1,\nnu^1)\in\mathcal{CE}(0,T_1)$, $(\mu^2,\nnu^2)\in\mathcal{CE}(0,T_2)$ with $\mu^1_{T_1}=\mu^2_0$.
Then the new family $(\mu_t,\nnu_t)_{t\in (0,T_1+T_2)}$, defined by $(\mu_t,\nnu_t)=(\mu^1_t,\nnu^1_t)$ for $0\le t\le T_1$
and $(\mu_t,\nnu_t)=(\mu^2_{t-T_1},\nnu^2_{t-T_1})$ for $T_1\le t\le T_2$, belongs to $\mathcal {CE}(0,T_1+T_2)$.
\end{enumerate}
\end{lemma}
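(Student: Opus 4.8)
The plan is to treat the two assertions separately, in each case reducing to a change of variables in the time integral, and for the gluing relying on the integrated form of the continuity equation furnished by the preceding Lemma.

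\textbf{Part 1 (Time rescaling).} I would argue by transporting test functions. Given $\hat\zeta\in C^1_c(\R^d\times(0,T'))$, set $\zeta(x,t):=\hat\zeta(x,\tau^{-1}(t))$, so that $\hat\zeta(x,s)=\zeta(x,\tau(s))$ and, by the chain rule, $\partial_s\hat\zeta(x,s)=\tau'(s)\,\partial_t\zeta(x,\tau(s))$ while $\nabla_x\hat\zeta(x,s)=\nabla_x\zeta(x,\tau(s))$. Writing out \eqref{cont2} for $(\hat\mu,\hat\nnu)$ and inserting $\hat\mu_s=\mu_{\tau(s)}$, $\hat\nnu_s=\tau'(s)\,\nnu_{\tau(s)}$, I would perform the substitution $t=\tau(s)$, $\d t=\tau'(s)\,\d s$. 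The crucial bookkeeping is that in the drift term the factor $\tau'(s)$ carried by $\hat\nnu_s$ combines with $\d s$ to produce exactly $\d t$, while in the first term the factor $\tau'(s)$ coming from $\partial_s\hat\zeta$ does the same; both integrals therefore collapse to the corresponding integrals for $(\mu,\nnu)$ on $(0,T)$, whose sum vanishes by hypothesis. Since $\tau^{-1}$ is again strictly increasing and absolutely continuous, the converse implication follows by exchanging the roles of $\tau$ and $\tau^{-1}$, so no separate argument is needed. I would also check the constraints \eqref{fin1}: for $\hat\nnu$ the substitution $t=\tau(s)$ turns $\int_0^{T'}\tau'(s)\,|\nnu_{\tau(s)}|(B(0,R))\,\d s$ directly into $V_R<+\infty$, and for $\hat\mu$ the analogous change of variables through the absolutely continuous inverse gives finiteness.

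The main obstacle in Part 1 is a regularity mismatch: because $\tau$ is only absolutely continuous, the composite $\zeta(x,t)=\hat\zeta(x,\tau^{-1}(t))$ is merely Lipschitz (absolutely continuous) in $t$ and not $C^1$, so it is not literally an admissible test function for \eqref{cont2}. I would resolve this by first observing that the distributional identity \eqref{cont2} extends, by mollification in the time variable, to every test function that is $C^1$ with compact support in $x$ and absolutely continuous with bounded derivative in $t$; with this enlarged class the change of variables above becomes rigorous.

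\textbf{Part 2 (Gluing).} The finiteness conditions \eqref{fin1} for the glued family are immediate, being sums of the corresponding integrals over $[0,T_1]$ and $[0,T_2]$, and weak$^*$ continuity holds on each subinterval and at the junction $t=T_1$ precisely because $\mu^1_{T_1}=\mu^2_0$. The only point to verify is \eqref{cont2} across the gluing time. Given $\zeta\in C^1_c(\R^d\times(0,T_1+T_2))$, I would apply the integrated identity of the preceding Lemma to $(\mu^1,\nnu^1)$ with endpoints $t_1=0$, $t_2=T_1$, and to the shifted solution $(\mu^2_{\cdot-T_1},\nnu^2_{\cdot-T_1})$ with endpoints $t_1=T_1$, $t_2=T_1+T_2$. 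Adding the two identities, the outer boundary terms at $t=0$ and $t=T_1+T_2$ vanish since $\zeta$ has compact support in the open interval, while the two inner boundary terms combine into $\int_{\R^d}\zeta(T_1,x)\,\d(\mu^1_{T_1}-\mu^2_0)=0$ by the matching hypothesis. What remains is exactly the space-time integral $\int_0^{T_1+T_2}\big(\int\partial_t\zeta\,\d\mu_t+\int\nabla_x\zeta\,\d\nnu_t\big)\,\d t$ for the glued pair, which is therefore zero, i.e.\ \eqref{cont2}. The delicate point here is the use of the integrated identity with non-vanishing boundary values at the interior time $T_1$, which is legitimate precisely because that identity was formulated with explicit boundary terms for arbitrary $t_1\le t_2$.
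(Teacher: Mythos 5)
The paper does not actually prove this lemma: it is stated without proof as part of the material adapted from \cite{DNS} and \cite{AGS} (the rescaling is Lemma 8.1.3 of \cite{AGS}, the gluing is equally standard there). So there is no in-paper argument to compare against; your proof follows the standard route and is essentially correct, including the correct identification of the two genuine technical points (loss of $C^1$ regularity of the transported test function, and the cancellation of boundary terms at the junction). Three details should be tightened. First, the time derivative of $\zeta(x,t)=\hat\zeta(x,\tau^{-1}(t))$ equals $\partial_s\hat\zeta(x,\tau^{-1}(t))\,(\tau^{-1})'(t)$, which is integrable in $t$ but in general \emph{not} bounded, so the enlarged test class you propose (``absolutely continuous with bounded derivative in $t$'') does not literally contain $\zeta$. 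The mollification argument still works, but you need $\sup_{t\in[0,T]}|\mu_t|(K)<+\infty$ for each compact $K$ (true for the weakly$^*$ continuous representative, since a weakly$^*$ compact family of Radon measures is locally uniformly bounded in variation) together with convergence of the mollified time derivatives in $L^1(0,T;C(K))$. Second, the same uniform local bound, rather than a bare change of variables, is what yields $\int_0^{T'}|\hat\mu_s|(B(0,R))\,\d s<+\infty$: the substitution $t=\tau(s)$ produces the Jacobian $(\tau^{-1})'$, which is not controlled by \eqref{fin1} alone. Third, in the gluing step $T_1$ is an \emph{endpoint} of the interval on which $(\mu^1,\nnu^1)$ is defined, not an interior time of it, and the restriction of $\zeta$ to $\R^d\times(0,T_1)$ does not have compact support there; the integrated identity should be applied on $[t_1,T_1-\delta]$ and passed to the limit $\delta\to 0$, using the weak$^*$ continuity of $t\mapsto\mu^1_t$ and the local integrability of $t\mapsto|\nnu^1_t|(K)$ from \eqref{fin1}. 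All three repairs are routine, so I would classify these as imprecisions rather than gaps.
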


\subsubsection{Conservation of the mass for solutions with finite energy}
In this paragraph we prove that, under a condition on the generalized moments of the reference measure $\gamma$
and for $\phi\in \AdmissibleDensityab$, the total (signed) mass conserves for solutions of the continuity equation
with finite energy.

\begin{definition}[Upper uniform concave bound]\label{def:uucb}
Let $\phi\in \AdmissibleDensityab$.
Fixing $\bar\rho:=(a+b)/2$ we use the notation
\begin{equation}\label{normbar}
    \|\ww\|:=\|\ww\|_{(\phi,\bar\rho)}, \qquad \|\zz\|_*:=\|\zz\|_{(\phi,\bar\rho)*},
\end{equation}
where the norms above (equivalents to the euclidean one) are defined in \eqref{def:phinorm}.
We consider the set:
$$\mathcal H:=\{g:\mathbb R\to\mathbb R\cup\{-\infty\}\,:g\textrm{ is u.s.c. and concave},\,g(\rho)\ge \tilde\phi(\rho,\zz/\|\zz\|_*)\,\,\forall \zz\ne 0\}.$$
This set is nonempty, and we can define:
$$\h(\rho)=\inf\{g(\rho):\,g\in\mathcal H\},$$
which turns out to be the smallest u.s.c. concave function greater than or equal to $\sup\{\tilde\phi(\rho,\zz):\,\|\zz\|_*=1\}$.
Since $\Int(\dom(\h))=(a,b)$ we obtain that
\begin{equation}\label{hbound}
 \h_{max}:=\sup_{\rho\in\R} \h(\rho)<+\infty.
\end{equation}
By homogeneity property it is immediate to prove that
\begin{equation}\label{eqvnorm}
    \tilde \phi(\rho,\zz)\le\mathrm{h}(\rho)\|\zz\|^q_* \qquad \text{and} \qquad \|\ww\|\le \mathrm{h}(\rho)^{1/q}\phi(\rho,\ww)^{1/p}.
\end{equation}
\end{definition}
When $\phi$ is given as in Definition \ref{def:phih}, we have
$\mathrm{h}(\rho)=C\cdot h(\rho)$, where $C:=\max\{|\zz|/\|\zz\|_*:\,\zz\ne 0\}$,
and $|\cdot|$ is the euclidean norm.

\begin{definition}\label{def:momentum}
Let $\gamma\in\mathcal M^+_{\mathrm{loc}}(\mathbb R^d)$, $r\in\mathbb R$. We define the
\emph{generalized $r$-th momentum} $\tilde {\mathrm m}_r(\gamma)$ of $\gamma$ by setting:
$$\mom_r(\gamma):=\gamma(B(0,1))+\int_{\mathbb R^d\setminus B(0,1)}|x|^r\,d\gamma(x).$$
\end{definition}
We observe that if $\mom_r(\gamma)<+\infty$ then $\mom_s(\gamma)<+\infty$ for every $s\leq r$.

\begin{proposition}[Mass conservation]\label{prop:consmass}
Let $p>1$, $q$ its conjugate exponent and $\phi\in \AdmissibleDensityab$.
Let $r\in\R$ such that $r\ge -q$ and $\gamma\in\mathcal M_{\mathrm{loc}}^+(\mathbb R^d)$
be a reference measure satisfying $\mom_r(\gamma)<+\infty$.\\
If $(\mu_t,\nnu_t)_{t\in[0,T]}\in\mathcal{CE}_{\phi,\gamma}(0,T)$ and
$\mu_0(\mathbb R^d) \in \R$, then $\mu_t(\mathbb R^d) = \mu_0(\mathbb R^d)$ for every $t\in [0,T]$.
\end{proposition}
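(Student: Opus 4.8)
The plan is to apply the criterion for constancy of the total mass contained in the final assertion of the representation lemma of Subsection~\ref{ce}: since we are given $\mu_0(\R^d)\in\R$, it suffices to verify that $\lim_{R\to+\infty}R^{-1}V_R=0$, where $V_R=\int_0^T|\nnu_t|(B(0,R))\,\d t$ as in \eqref{fin1}. Because $r\ge -q$, the monotonicity of the generalized moments noted after Definition~\ref{def:momentum} yields $\mom_{-q}(\gamma)<+\infty$, so I would assume from the start $r=-q$, which is the critical exponent. Moreover, since $(\mu,\nnu)\in\CE_{\phi,\gamma}(0,T)$ has finite energy, for a.e.\ $t$ Proposition~\ref{assc1} gives $\mu_t=\rho_t\gamma$, $\nnu_t=\ww_t\gamma$ with $a\le\rho_t\le b$ and $E_t:=\int_{\R^d}\phi(\rho_t,\ww_t)\,\d\gamma$ satisfying $\int_0^T E_t\,\d t<+\infty$; by H\"older's inequality this also gives $\int_0^T E_t^{1/p}\,\d t<+\infty$.

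First I would record the pointwise bound that makes the bounded domain $(a,b)$ decisive. By \eqref{eqvnorm} and \eqref{hbound} we have $\|\ww_t\|\le\h(\rho_t)^{1/q}\phi(\rho_t,\ww_t)^{1/p}\le\h_{max}^{1/q}\phi(\rho_t,\ww_t)^{1/p}$, and since the norm $\|\cdot\|$ is equivalent to the Euclidean one there is a constant $C_1$ with
$$|\ww_t(x)|\le C_1\,\phi(\rho_t(x),\ww_t(x))^{1/p}\qquad\text{for }\gamma\text{-a.e.\ }x.$$
Splitting $B(0,R)=B(0,1)\cup A_R$ with $A_R:=B(0,R)\setminus B(0,1)$, the contribution of $B(0,1)$ is controlled by H\"older's inequality as $\int_{B(0,1)}|\ww_t|\,\d\gamma\le C_1 E_t^{1/p}\gamma(B(0,1))^{1/q}$, whose $R^{-1}$-multiple integrates to $o(1)$ in $t$. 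On $A_R$ I would apply H\"older with the weights $|x|^{p}$ and $|x|^{-q}$:
$$\int_{A_R}|\ww_t|\,\d\gamma\le C_1\Big(\int_{A_R}\phi(\rho_t,\ww_t)\,|x|^{p}\,\d\gamma\Big)^{1/p}\Big(\int_{A_R}|x|^{-q}\,\d\gamma\Big)^{1/q}\le C_1\,\mom_{-q}(\gamma)^{1/q}\Big(\int_{A_R}\phi(\rho_t,\ww_t)\,|x|^{p}\,\d\gamma\Big)^{1/p},$$
where the last inequality uses $A_R\subset\{|x|\ge 1\}$ together with $\mom_{-q}(\gamma)<+\infty$.

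The crude estimate $|x|^p\le R^p$ on $A_R$ only yields $R^{-1}V_R=O(1)$, and this is precisely the obstacle at the critical exponent $r=-q$. The remedy is to retain the factor $(|x|/R)^p$ and exploit its decay: dividing by $R$,
$$\frac1R\int_{A_R}|\ww_t|\,\d\gamma\le C_1\,\mom_{-q}(\gamma)^{1/q}\Big(\int_{A_R}\phi(\rho_t,\ww_t)\,(|x|/R)^{p}\,\d\gamma\Big)^{1/p}.$$
For fixed $t$ the integrand $\phi(\rho_t,\ww_t)(|x|/R)^p\chi_{A_R}$ is dominated by the integrable function $\phi(\rho_t,\ww_t)$ (since $|x|/R<1$ on $A_R$) and tends to $0$ pointwise as $R\to+\infty$, so by dominated convergence the right-hand side tends to $0$ while staying bounded by $C_1\mom_{-q}(\gamma)^{1/q}E_t^{1/p}$. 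Integrating in $t$ and applying dominated convergence once more, with integrable envelope $E_t^{1/p}$ on $(0,T)$, gives $\lim_{R\to+\infty}R^{-1}\int_0^T\int_{A_R}|\ww_t|\,\d\gamma\,\d t=0$. Combined with the $B(0,1)$ estimate this proves $\lim_{R\to+\infty}R^{-1}V_R=0$, whence the mass-constancy criterion yields $\mu_t(\R^d)=\mu_0(\R^d)\in\R$ for every $t\in[0,T]$. The essential point is that the boundedness of $(a,b)$ forces $\h_{max}<+\infty$, giving the uniform control $|\ww_t|\le C_1\phi^{1/p}$; without it the flux through large spheres could not be tamed, and the borderline case $r=-q$ is exactly where the vanishing-tail argument, rather than mere power counting, is needed.
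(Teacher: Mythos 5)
Your proof is correct, and it reaches the conclusion by a genuinely different route than the paper. The paper argues directly with cutoffs: testing the continuity equation against $\zeta_R(\cdot)=\zeta(\cdot/R)$, whose gradient is of size $1/R$ and supported in the annulus $B_{2R}\setminus B_R$, the same duality estimate you use, $\|\ww_t\|\le\h(\rho_t)^{1/q}\phi(\rho_t,\ww_t)^{1/p}$ with $\h\le\h_{max}<+\infty$, combined with H\"older gives $\bigl|\int\zeta_R\,d\mu_{t_1}-\int\zeta_R\,d\mu_{t_2}\bigr|\le C\bigl(R^{-q}\gamma(B_{2R}\setminus B_R)\bigr)^{1/q}$, and the moment hypothesis enters through the elementary remark that $R^{-q}\gamma(B_{2R}\setminus B_R)\le 2^{q}\int_{B_{2R}\setminus B_R}|x|^{-q}\,d\gamma\to0$ as the tail of a convergent integral. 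You instead delegate the cutoff machinery to the final assertion of the lemma of Subsection \ref{ce} and verify its hypothesis $\lim_{R\to+\infty}R^{-1}V_R=0$; since $V_R$ measures the flux over the whole ball rather than over an annulus where $|x|\sim R$, the naive bound is only $O(R)$, and your weighted H\"older (pairing $\phi^{1/p}|x|$ against $|x|^{-1}$) together with the two dominated-convergence passages is exactly what is needed to squeeze out the extra decay at the critical exponent $r=-q$. Both arguments rest on the same two ingredients — $\h_{max}<+\infty$, i.e.\ boundedness of $(a,b)$, and $\mom_{-q}(\gamma)<+\infty$ — so the difference is organizational rather than substantive: the paper's annulus localization makes the decay automatic and is shorter, while your version isolates the sublinear-flux criterion as the thing actually being checked and is self-contained modulo the stated lemma. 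All the individual steps you take (the reduction to $r=-q$ by moment monotonicity, the equivalence of $\|\cdot\|$ with the Euclidean norm, the integrability of $E_t^{1/p}$ on $(0,T)$, and the identification of $\mu_t$ with its weakly$^*$ continuous representative) are justified by results already established in the paper.
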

\begin{proof}
We consider a cutoff function $\zeta\in C^\infty_c(\R^d)$ such that
$\zeta(x)=1$ if $|x|\leq1$, $\zeta(x)=0$ if $|x|\geq2$ and
$|\nabla \zeta (x)|\leq 1$ for all $x\in\R^d$.
We consider the family $\zeta_R(x)=\zeta(x/R)$, for $R>0$,
that obviously satisfies $|\nabla \zeta_R(x)|\leq 1/R$ for all $x\in\R^d$.

Using the notations of Definition \ref{def:uucb},
for every $t_1,t_2\in[0,T]$, $t_1<t_2$,
by Proposition \ref{prop:phinorm} and \eqref{eqvnorm} we have
\begin{align*}
   \Big| \int_{\R^d} \zeta_R\,d\mu_{t_1} -\int_{\R^d}  \zeta_R\,d\mu_{t_2}\Big|
   & \leq \int_{t_1}^{t_2}\int_{\R^d}| \nabla \zeta_R \cdot \ww_t |\,d\gamma\,dt \\
   & \leq \int_{t_1}^{t_2}\int_{\R^d} \tilde\phi(\rho_t,\nabla\zeta_R)^{1/q}\phi(\rho_t,\ww_t)^{1/p}\,d\gamma\,dt \\
    &\leq  \Big( \int_{t_1}^{t_2}\int_{B_{2R}\setminus B_R} \h(\rho_t) \|\nabla\zeta_R\|^q_*\,d\gamma\,dt\Big)^{1/q}
    \Big(\int_{t_1}^{t_2}\int_{\R^d} \phi(\rho_t,\ww_t)\,d\gamma\,dt \Big)^{1/p}.
\end{align*}
Since $\int_0^T\Phi(\mu_t,\nnu_t|\gamma)\,dt <+\infty$, by \eqref{hbound}
and the equivalence of $\| \cdot\|_*$ with the euclidean norm,
the last inequality shows that there exists $C>0$ such that
\begin{equation}\label{a}
\Big| \int_{\R^d} \zeta_R\,d\mu_{t_1} -\int_{\R^d} \zeta_R\,d\mu_{t_2}\Big| \leq C \Big(\frac{1}{R^q}\gamma(B_{2R}\setminus B_R)\Big)^{1/q}.
\end{equation}
Since $\mom_r(\gamma)<+\infty$ shows that $\lim_{R\to+\infty}R^r\gamma(B_{2R}\setminus B_R)=0$ we have that
$\lim_{R\to+\infty}\frac{1}{R^q}\gamma(B_{2R}\setminus B_R)=0$ if $r\geq -q$. Then the conservation of the mean follows from \eqref{a}.
\end{proof}

\begin{example}
When $\phi\in\AdmissibleDensityab$ with $a<0$ and $b>0$, if $\gamma = \Leb{d}$ and $d>q$ in general
solutions of the continuity equations with finite energy could not conserve the mass.

Let $\eps>0$ such that $a+\eps<0$ and $b-\eps>0$ and consider an initial measure with
  compact support and mass different from $0$,
  $\mu_0=\rho_0\Leb d$, such that $a+\eps\leq\rho_0\leq b-\eps$.
We define the curve, for $t\ge 0$,
  \begin{equation}\label{eq:cap5:13}
    \mu_t:=\rho_t\Leb d,\quad
    \rho_t(x):=e^{-dt }\rho_0(e^{-t}x),\quad
    \nnu_t:=\ww_t\Leb d = x\rho_t(x)\Leb d.
  \end{equation}
  It is easy to check that
  $(\mu,\nnu)\in \CE(0,{+\infty})$, $\mu_t(\Rd)=\mu_0(\R^d)$ and $a+\eps\leq\rho_t\leq b-\eps$.
By 3 of Proposition \ref{prop:phinorm}
we have that $\phi(\rho_t,\ww_t)\leq C|\ww_t|^p$.
By a simple computation we obtain that
\begin{align*}
    \int_{\R^d} |\ww_t(x)|^p\,\d x=
    \int_{\R^d} |x|^p e^{-tdp}|\rho_0(e^{-t}x)|^p\,\d x=
    \int_{\R^d} |y|^p e^{t((1-d)p+d)}|\rho_0(y)|^p\,\d y
\end{align*}
and then
$$
\int_0^{+\infty} \phi(\rho_t,\ww_t) \d x \,\d t <+\infty
$$
when $d>q$.

The curve $(\mu_t,\nnu_t)$ can be reparametrized between $[0,1]$
setting $s=\frac{2}{\pi}\arctan{t}$, $t\in (0,+\infty)$ and $\eta_s=\rho_{\tan (\frac{\pi}{2}s)}=\rho_t$.
It is not difficult to check that
the energy is still finite and $\eta_s$ connect $\mu_0$ with the null measure.
\end{example}

\subsubsection{Compactness for solutions with finite energy}
In this section we prove a compactness result for signed solutions of the continuity equation.
This result is a useful tool in order to obtain existence of geodesics of the distance defined in the next Section and
its lower semi-continuity with respect to weak$^*$ convergence.

\begin{proposition}[Compactness]\label{prop:comp}
Let $\phi\in \AdmissibleDensityab$
and $\gamma^n,\gamma\in\mathcal M^+_{\mathrm{loc}}(\mathbb R^d)$ be a sequence such that $\gamma^n\rightharpoonup^*\gamma$.\\
If $(\mu^n,\nnu^n)$ is a sequence in $\mathcal{CE}_{\phi,\gamma^n}(0,T)$ satisfying
\begin{equation}\label{bound}
    \sup_{n\in\N}\int_0^T\Phi(\mu^n_t,\nnu^n_t|\gamma^n)\,dt <+\infty ,
\end{equation}
then there exists a subsequence (still indexed by $n$) and a couple $(\mu,\nnu)\in\mathcal {CE}_{\phi,\gamma}(0,T)$
such that $\mu^n_t\rightharpoonup^*\mu_t$ in $\mathcal{M}_{\mathrm{loc}}(\mathbb R^d)$ for all $t\in[0,T]$,
$\nnu^n\rightharpoonup^*\nnu$ in $\mathcal M_{\mathrm{loc}}(\mathbb R^d\times(0,T);\mathbb R^d)$, and
\begin{equation}\label{lscn}
\int_0^T\Phi(\mu_t,\nnu_t|\gamma)\,dt\le\liminf_{n\to +\infty}\int_0^T\Phi(\mu_t^n,\nnu_t^n|\gamma^n)\,dt.
\end{equation}
If along the subsequence $\mom_{-q}(\gamma^n)<+\infty$ for all $n$ and $\mom_{-q}(\gamma)<+\infty$, and
$\mu_0^n(\R^d)\to\mu_0(\R^d)\in\R$, then $\mu_t^n(\R^d)\to\mu_t(\R^d)$ for every $t\in[0,T]$.
\end{proposition}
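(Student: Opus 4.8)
The plan is to prove the compactness statement in two independent stages: first the existence of a weak$^*$-convergent subsequence together with the limit lying in $\CE_{\phi,\gamma}(0,T)$ and the lower semicontinuity \eqref{lscn}; then, under the additional moment hypotheses, the convergence of the total masses.

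\emph{Stage one: compactness and l.s.c.}\ The key structural fact is Proposition \ref{assc1}: since $\phi\in\AdmissibleDensityab$ and the energies \eqref{bound} are uniformly bounded, for a.e.\ $t$ we have $\mu^n_t=\rho^n_t\gamma^n$ and $\nnu^n_t=\ww^n_t\gamma^n$ with $a\le\rho^n_t\le b$ for $\gamma^n$-a.e.\ $x$. First I would extract, for each fixed $R>0$, uniform-in-$n$ bounds on $|\mu^n_t|(B(0,R))$ and $\int_0^T|\nnu^n_t|(B(0,R))\,dt$. For the masses this follows from $a\le\rho^n_t\le b$ together with $\gamma^n\rightharpoonup^*\gamma$ (which gives a uniform bound on $\gamma^n(B(0,R+1))$), and for the fluxes it follows from \eqref{eqvnorm} and the Cauchy--Schwarz/H\"older argument already used in the proof of Proposition \ref{prop:consmass}, bounding $\int_0^T\int_{B_R}|\ww^n_t|\,d\gamma^n\,dt$ by $\big(\h_{max}\,T\,\gamma^n(B_R)\big)^{1/q}\big(\sup_n\int_0^T\Phi\,dt\big)^{1/p}$. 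These bounds give weak$^*$ relative compactness of $\nnu^n$ in $\MM_\loc(\R^d\times(0,T);\R^d)$ and, via an Arzel\`a--Ascoli argument on the weakly$^*$ continuous curves $t\mapsto\mu^n_t$ (equicontinuity coming from the flux bound through the integrated continuity equation in Lemma \ref{lemma:gluing}'s setting), pointwise-in-$t$ weak$^*$ convergence $\mu^n_t\rightharpoonup^*\mu_t$ along a subsequence. Passing to the limit in the distributional identity \eqref{cont2} shows $(\mu,\nnu)\in\CE(0,T)$, and the joint lower semicontinuity of $\Phi$ with respect to simultaneous weak$^*$ convergence of $(\mu^n,\nnu^n,\gamma^n)$ (Theorem \ref{convfunc}, part 1, applied to $\gamma^n\rightharpoonup^*\gamma$) yields \eqref{lscn}; in particular $(\mu,\nnu)\in\CE_{\phi,\gamma}(0,T)$.

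\emph{Stage two: convergence of masses.}\ This is where the moment hypothesis $\mom_{-q}(\gamma^n)<+\infty$, $\mom_{-q}(\gamma)<+\infty$ enters, and I expect it to be the main obstacle. By Proposition \ref{prop:consmass} each $\mu^n_t(\R^d)=\mu^n_0(\R^d)$ is constant in $t$, and likewise $\mu_t(\R^d)=\mu_0(\R^d)$ is constant. The difficulty is that weak$^*$ convergence $\mu^n_t\rightharpoonup^*\mu_t$ does \emph{not} by itself pass to the unbounded test function $1$, so the masses could a priori leak at infinity. The remedy is a uniform-in-$n$ tightness estimate: using the cutoff $\zeta_R$ from the proof of Proposition \ref{prop:consmass}, inequality \eqref{a} gives, with a constant $C$ independent of $n$ thanks to \eqref{bound} and \eqref{hbound},
\begin{equation*}
\Big|\int_{\R^d}\zeta_R\,d\mu^n_t-\mu^n_t(\R^d)\Big|
\le C\Big(\frac{1}{R^q}\gamma^n(B_{2R}\setminus B_R)\Big)^{1/q}.
\end{equation*}
I would then show $R^{-q}\gamma^n(B_{2R}\setminus B_R)\to0$ as $R\to\infty$ \emph{uniformly in} $n$, which follows from a uniform bound $\sup_n\mom_{-q}(\gamma^n)<+\infty$ obtained from the lower semicontinuity of the moment functional under $\gamma^n\rightharpoonup^*\gamma$ together with $\mom_{-q}(\gamma)<+\infty$ (one checks $R^{-q}\gamma^n(B_{2R}\setminus B_R)\le 2^q\int_{B_{2R}\setminus B_R}|x|^{-q}\,d\gamma^n\le 2^q\,\mom_{-q}(\gamma^n)$ restricted to the annulus, whose tail is uniformly small). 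Combining this uniform tightness with the pointwise weak$^*$ convergence $\mu^n_t\rightharpoonup^*\mu_t$ applied to the fixed test function $\zeta_R$, and letting first $n\to\infty$ and then $R\to\infty$, yields $\mu^n_t(\R^d)\to\mu_t(\R^d)$ for every $t\in[0,T]$, using the hypothesis $\mu^n_0(\R^d)\to\mu_0(\R^d)$ to anchor the limit. The only delicate point is justifying the uniform tightness constant, i.e.\ that the energy bound \eqref{bound} and the weak$^*$ convergence of the reference measures together produce a modulus of tail-decay independent of $n$; everything else is a routine limit exchange.
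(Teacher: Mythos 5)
Your first stage follows the paper's own proof essentially step for step: Proposition \ref{assc1} gives $\mu^n=\rho^n\gamma^n$, $\nnu^n=\ww^n\gamma^n$ with $|\rho^n|\le\max\{|a|,|b|\}$; the estimate $\int_{t_1}^{t_2}\int_B\|\ww^n\|\,d\gamma^n\,dt\le C\bigl((t_2-t_1)\gamma^n(B)\bigr)^{1/q}$ obtained from \eqref{eqvnorm}, \eqref{hbound}, \eqref{bound} and H\"older gives weak$^*$ compactness of the fluxes and equicontinuity of the curves $t\mapsto\mu^n_t$; the continuity equation passes to the limit as in Lemma 4.5 of \cite{DNS}; and \eqref{lscn} is part 1 of Theorem \ref{convfunc}. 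No objection there.

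The second stage is where you depart from the paper, and the departure introduces a genuine error. The claim that $\sup_n\mom_{-q}(\gamma^n)<+\infty$ follows from $\gamma^n\weakto^*\gamma$ and $\mom_{-q}(\gamma)<+\infty$ by ``lower semicontinuity of the moment functional'' is false: lower semicontinuity gives $\mom_{-q}(\gamma)\le\liminf_n\mom_{-q}(\gamma^n)$, which is the wrong direction, and e.g.\ $\gamma^n=\gamma+n\,\delta_{x_n}$ with $|x_n|=n^{1/(2q)}\to+\infty$ satisfies $\gamma^n\weakto^*\gamma$ in $\MM^+_\loc(\R^d)$ while $\mom_{-q}(\gamma^n)\ge n^{1/2}\to+\infty$. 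Even a uniform moment bound would not yield the uniform smallness of $R^{-q}\gamma^n(B_{2R}\setminus B_R)$ in $n$ that your tightness argument needs; that would require uniform integrability of the tails $\int_{|x|>R}|x|^{-q}\,d\gamma^n$, which is not available. Fortunately the whole detour is unnecessary, because the hypothesis is not merely that $\mu^n_0(\R^d)$ converges in $\R$ but that it converges to $\mu_0(\R^d)$, the mass of the weak$^*$ limit at $t=0$. Since $\mom_{-q}(\gamma^n)<+\infty$ and each $(\mu^n,\nnu^n)$ has finite energy, Proposition \ref{prop:consmass} gives $\mu^n_t(\R^d)=\mu^n_0(\R^d)$ for all $t$; since $\mom_{-q}(\gamma)<+\infty$ and $(\mu,\nnu)\in\CE_{\phi,\gamma}(0,T)$ by \eqref{lscn}, the same proposition gives $\mu_t(\R^d)=\mu_0(\R^d)$. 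Hence $\mu^n_t(\R^d)=\mu^n_0(\R^d)\to\mu_0(\R^d)=\mu_t(\R^d)$ for every $t\in[0,T]$, which is exactly the one-line conclusion of the paper.
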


\begin{proof}
By Proposition \ref{assc1} we have
$\mu^n=\rho^n\gamma^n$, $\nnu^n=\ww^n\gamma^n$ and $|\rho^n| \leq c:=\max\{|a|,|b|\}$.
Then there exists a subsequence (still indexed by $n$) and $\rho$ such that
$\rho^n \weakto \rho$ weakly in $L^1_{\loc}(\R^d\times[0,T])$.
On the other hand, by \eqref{eqvnorm}, for every bounded Borel set $B\subset \R^d$ and for every $t_1,t_2\in[0,T]$, $t_1<t_2$ we have
\begin{align*}
    \int_{t_1}^{t_2}\int_B \|\ww^n\|\,d\gamma^n\,dt & \leq \int_{t_1}^{t_2}\int_B \h(\rho^n)^{1/q}\phi(\rho^n,\ww^n)^{1/p}\,d\gamma^n\,dt \\
    &\leq \Big( \int_{t_1}^{t_2}\int_B \h(\rho^n)\,d\gamma^n\,dt\Big)^{1/q}
    \Big(\int_{t_1}^{t_2}\int_B \phi(\rho^n,\ww^n)\,d\gamma^n\,dt \Big)^{1/p}.
\end{align*}
By \eqref{hbound}, \eqref{bound} and the equivalence of $\| \cdot\|$ with the euclidean norm, the last inequality shows that there exist $C>0$ such that
$$\int_{t_1}^{t_2}\int_B \|\ww^n\|\,d\gamma^n\,dt \leq C ((t_2-t_1) \gamma^n(B))^{1/q},$$
By this estimate there exist $\nnu\in\MM_\loc(\R^d\times[0,T],\R^d)$ and a subsequence
such that $\nnu^n \weakto^*\nnu$.
By the lower semicontinuity property of Theorem \ref{convfunc} we obtain \eqref{lscn}.
Reasoning as in the proof of Lemma 4.5 of \cite{DNS} we obtain that $(\mu,\nnu)$ satisfies the continuity equation.\\
Finally, by Proposition \ref{prop:consmass} $\mu_t^n(\R^d)$ and $\mu_t(\R^d)$ do not depend on $t\in[0,T]$.
\end{proof}

\section{The modified Wasserstein distance}\label{sec:W}

In this Section we give the rigorous definition of the modified Wasserstein distance illustrated in the introduction.
We deal only with the case of the distance induced by an action density function $\phi \in \AdmissibleDensityab$
for $a,b\in\R$ and we refer to \cite{DNS} for the case $\phi \in \AdmissibleDensity (0,+\infty)$.

The proofs are almost all omitted because follows exactly as in \cite[Section 5]{DNS} from the results of the previous Sections.

\begin{definition}
Given a reference measure $\gamma\in\MM^+_\loc(\R^d)$,
an admissible action density function $\phi \in \AdmissibleDensityab$ and
the corresponding action functional $\Phi$ of Definition \ref{def:Phi},
for $\mu^0,\mu^1 \in \MM_\loc(\R^d)$ we define
\begin{align}\label{def:Wm}
\Wmod_{\phi,\gamma}(\mu^0,\mu^1):=&\inf\left\{ \Big(\int_0^1
\Phi (\mu_s,\nnu_s|\gamma) \,\d s\Big)^{1/p} :
(\mu,\nnu)\in \CE_{\phi,\gamma}(0,1;\mu^0 \to\mu^1)\right\}.
\end{align}
\end{definition}

$\Wmod_{\phi,\gamma}(\mu^0,\mu^1)=+\infty$ if the set of
connecting curves $\CE_{\phi,\gamma}(0,1;\mu^0\to\mu^1)$ is empty.

By the compactness  Proposition \ref{prop:comp} we obtain the existence of constant speed minimizing geodesics.
Precisely, following the proof of \cite[Thm. 5.4]{DNS} and Theorem 5.11 of \cite{DNS} we can prove the following result.

\begin{proposition}[Existence of geodesics, convexity and uniqueness of geodesics]\label{prop:convexity}
Given $\gamma\in\MM^+_\loc(\R^d)$ and $\phi \in \AdmissibleDensityab$,
for every $\mu_0,\ \mu_1\in\MM_\loc(\R^d)$ such
that $\Wmod_{\phi,\gamma}(\mu_0,\mu_1)<+\infty$ there exists a
minimizing couple $(\mu,\nnu)$ in \eqref{def:Wm}
and the curve $(\mu_s)_{s\in[0,1]}$ is a constant speed geodesic for
$\Wmod_{\phi,\gamma}$, thus satisfying
\begin{equation*}
\Wmod_{\phi,\gamma}(\mu_t,\mu_s) =
|t-s|\Wmod_{\phi,\gamma}(\mu_0,\mu_1) \qquad \forall s,t \in
[0,1].
\end{equation*}
We have the characterization
\begin{align}\label{charWm}
\Wmod_{\phi,\gamma}(\mu^0,\mu^1)= \inf\left\{\int_0^1 \Big(\Phi
(\mu_s,\nnu_s|\gamma)\Big)^{1/p} \,\d s : (\mu,\nnu)\in \CE(0,1;\mu^0
\to\mu^1)\right\}.
\end{align}
Moreover $\Wmod^p_{\phi,\gamma}:\MM_\loc(\R^d)\times \MM_\loc(\R^d)\to[0,+\infty]$ is convex,
i.e. for every $\mu_i^j\in \MM_\loc(\Rd)$, $i,j=0,1$,
and $\tau\in [0,1]$, if $\mu^\tau_i=(1-\tau)\mu^0_i+\tau\mu^1_i$,
\begin{equation}\label{eq:40}
    \Wmod^p_{\phi,\gamma}(\mu^\tau_0,\mu^\tau_1)\le
    (1-\tau)\Wmod^p_{\phi,\gamma}(\mu^0_0,\mu^0_1)+\tau
    \Wmod^p_{\phi,\gamma}(\mu^1_0,\mu^1_1).
\end{equation}
If $\phi$ is strictly convex then
for every $\mu_0,\mu_1\in \MM_\loc(\Rd)$ with
$\Wmod_{\phi,\gamma}(\mu_0,\mu_1)<+\infty$ there exists
a \emph{unique} minimizer $(\mu,\nnu)\in \CE_{\phi,\gamma}(0,1;\mu_0\to\mu_1)$
of \eqref{def:Wm}.
\end{proposition}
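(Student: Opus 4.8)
The plan is to follow the strategy of \cite[Section 5]{DNS}, relying on the compactness Proposition \ref{prop:comp} for the existence statements and on the one-homogeneous reparametrization trick for the characterization \eqref{charWm}. First I would establish existence of minimizers. Take a minimizing sequence $(\mu^n,\nnu^n)\in\CE_{\phi,\gamma}(0,1;\mu_0\to\mu_1)$, so that $\int_0^1\Phi(\mu^n_s,\nnu^n_s|\gamma)\,ds$ converges to the infimum; in particular the energies are uniformly bounded, which is exactly the hypothesis \eqref{bound} of Proposition \ref{prop:comp} (here all the reference measures equal the fixed $\gamma$). Applying that proposition yields a subsequence and a limit $(\mu,\nnu)\in\CE_{\phi,\gamma}(0,1)$ with $\mu^n_t\weakto^*\mu_t$ for every $t$, hence $\mu_0$ and $\mu_1$ are preserved as the endpoints, so $(\mu,\nnu)\in\CE_{\phi,\gamma}(0,1;\mu_0\to\mu_1)$. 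The lower semicontinuity \eqref{lscn} then gives $\int_0^1\Phi(\mu_s,\nnu_s|\gamma)\,ds\le\liminf_n\int_0^1\Phi(\mu^n_s,\nnu^n_s|\gamma)\,ds$, so $(\mu,\nnu)$ attains the infimum.

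Next I would derive the characterization \eqref{charWm} and the constant-speed geodesic property. The key tool is the time-rescaling part of Lemma \ref{lemma:gluing}: given any $(\mu,\nnu)\in\CE(0,1;\mu_0\to\mu_1)$ one reparametrizes so that $s\mapsto(\Phi(\mu_s,\nnu_s|\gamma))^{1/p}$ is constant in $s$. Writing $A(s):=(\Phi(\mu_s,\nnu_s|\gamma))^{1/p}$, by Jensen's (or Hölder's) inequality one has $\int_0^1 A(s)^p\,ds\ge\big(\int_0^1 A(s)\,ds\big)^p$ with equality precisely when $A$ is constant, which is achievable after rescaling since $\Phi$ transforms under the rescaling of Lemma \ref{lemma:gluing} in the correct $p$-homogeneous way (recall $\phi$ is $p$-homogeneous in $\ww$ by (F1)). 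This shows that the $p$-th power definition \eqref{def:Wm} coincides with the arc-length functional in \eqref{charWm}, and that a minimizer can always be taken with constant $A(s)\equiv\Wmod_{\phi,\gamma}(\mu_0,\mu_1)$. Restricting such a constant-speed minimizer to a subinterval and rescaling gives $\Wmod_{\phi,\gamma}(\mu_t,\mu_s)=|t-s|\,\Wmod_{\phi,\gamma}(\mu_0,\mu_1)$.

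For convexity \eqref{eq:40}, I would use the linearity of the continuity equation together with the convexity of $\Phi$ in $(\mu,\nnu)$. Given optimal curves $(\mu^0,\nnu^0)$ and $(\mu^1,\nnu^1)$ connecting $\mu^0_0$ to $\mu^0_1$ and $\mu^1_0$ to $\mu^1_1$ respectively, the convex combination $(\mu^\tau,\nnu^\tau):=(1-\tau)(\mu^0,\nnu^0)+\tau(\mu^1,\nnu^1)$ still solves the (linear) equation \eqref{cont2} and connects $\mu^\tau_0$ to $\mu^\tau_1$; since $\Phi(\cdot,\cdot|\gamma)$ is jointly convex (as $\phi$ is convex and $\Phi$ is its integral functional in the sense of Definition \ref{def:Phi}), evaluating \eqref{def:Wm} on this competitor and using convexity of $t\mapsto t^p$ yields \eqref{eq:40}. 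Finally, uniqueness when $\phi$ is strictly convex follows by the standard argument: if two distinct minimizers existed, their midpoint would be an admissible competitor with strictly smaller energy by strict convexity of $\Phi$, contradicting minimality. The main obstacle I anticipate is the reparametrization step: one must verify carefully that the time-rescaling of Lemma \ref{lemma:gluing} interacts with $\Phi$ through the correct $p$-homogeneity so that the $\int A^p$ and $(\int A)^p$ functionals are genuinely linked, and that the rescaling does not destroy membership in $\CE_{\phi,\gamma}$ or alter the endpoints; the rest is routine once that is in place.
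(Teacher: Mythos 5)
Your proposal is correct and follows essentially the same route the paper takes, namely the one of \cite[Thm.~5.4 and Thm.~5.11]{DNS} to which the authors defer: compactness (Proposition \ref{prop:comp}) applied to a minimizing sequence for existence, the time-rescaling of Lemma \ref{lemma:gluing} combined with Jensen/H\"older and the $p$-homogeneity of $\phi$ in $\ww$ for \eqref{charWm} and the constant-speed property, linearity of the continuity equation plus joint convexity of $\Phi$ for \eqref{eq:40}, and strict convexity for uniqueness. The only superfluous remark is the appeal to convexity of $t\mapsto t^p$ in the convexity step, since $\Wmod^p_{\phi,\gamma}$ is itself defined as an infimum of the convex functional $\int_0^1\Phi\,\d s$; otherwise the argument is the intended one.
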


\begin{proposition}\label{prop:basic}
Given $\gamma\in\MM^+_\loc(\R^d)$ and $\phi \in \AdmissibleDensityab$,
we have that $\Wmod_{\phi,\gamma}$ is a pseudo-distance on $\MM_\loc(\R^d)$;
i.e. $\Wmod_{\phi,\gamma}$ satisfies the axiom of the distance but
can assume the value $+\infty$.\\
The topology induced by $\Wmod_{\phi,\gamma}$ on $\MM_\loc(\R^d)$
is stronger than or equivalent to the weak$^*$ one.\\
Bounded sets with respect to $\Wmod_{\phi,\gamma}$ are weakly$^*$ relatively compact.
\end{proposition}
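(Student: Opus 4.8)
The plan is to verify the three assertions of Proposition \ref{prop:basic} in turn, leaning on the compactness result Proposition \ref{prop:comp} and the properties of $\Phi$ from Theorem \ref{convfunc}.

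\textbf{Pseudo-distance axioms.} First I would check that $\Wmod_{\phi,\gamma}$ is symmetric, vanishes exactly on the diagonal, and satisfies the triangle inequality. Symmetry follows from the time-reversal $(\mu_s,\nnu_s)\mapsto(\mu_{1-s},-\nnu_{1-s})$, which maps $\CE_{\phi,\gamma}(0,1;\mu^0\to\mu^1)$ onto $\CE_{\phi,\gamma}(0,1;\mu^1\to\mu^0)$ and preserves the action, since $\phi(\rho,\ww)=\phi(\rho,-\ww)$ by the $p$-homogeneity (F1) with $\lambda=-1$. For $\Wmod_{\phi,\gamma}(\mu^0,\mu^1)=0\iff\mu^0=\mu^1$: the stationary curve $(\mu^0,0)$ gives the value $0$, and conversely a vanishing action forces (via the characterization \eqref{charWm} and the fact that $\phi(\rho,\ww)>0$ for $\ww\neq0$ on the relevant slices, property (F2) together with part 3 of Proposition \ref{prop:phinorm}) that $\nnu_s\equiv0$, whence $\partial_t\mu_t=0$ and $\mu^0=\mu^1$. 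The triangle inequality is obtained by the gluing construction of Lemma \ref{lemma:gluing}(2): given near-optimal connecting curves from $\mu^0$ to $\mu^1$ and from $\mu^1$ to $\mu^2$, one reparametrizes each by Lemma \ref{lemma:gluing}(1), concatenates them, and uses the scaling of the action under time rescaling together with the form \eqref{charWm} of the distance as an infimum of $\int_0^1(\Phi)^{1/p}\,\d s$, for which subadditivity under concatenation is immediate.

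\textbf{Comparison of topologies.} To show that $\Wmod_{\phi,\gamma}$-convergence implies weak$^*$ convergence, I would take $\mu^n\to\mu$ in $\Wmod_{\phi,\gamma}$ and, for each $n$, pick a connecting curve realizing (up to $1/n$) the distance from $\mu$ to $\mu^n$. Along such a curve, the estimate driving the proof of Proposition \ref{prop:consmass} — namely \eqref{eqvnorm} combined with H\"older and the uniform bound \eqref{hbound} on $\h$ — controls $\big|\int\zeta\,\d\mu^n-\int\zeta\,\d\mu\big|$ for any fixed test function $\zeta\in C^1_c(\R^d)$ by a constant times $\Wmod_{\phi,\gamma}(\mu,\mu^n)$ times a quantity depending only on $\supp\zeta$ and $\gamma$. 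Letting $n\to\infty$ gives $\int\zeta\,\d\mu^n\to\int\zeta\,\d\mu$, i.e. weak$^*$ convergence, so the $\Wmod_{\phi,\gamma}$-topology is at least as strong as the weak$^*$ one.

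\textbf{Relative compactness of bounded sets.} Finally, let $\SS$ be $\Wmod_{\phi,\gamma}$-bounded, say contained in a ball around some $\sigma$. For each $\mu\in\SS$ choose a connecting curve from $\sigma$ to $\mu$ with action bounded uniformly by the square (more precisely the $p$-th power) of the radius. This places the family of connecting curves in a set satisfying the hypothesis \eqref{bound} of the compactness Proposition \ref{prop:comp} (with the constant sequence $\gamma^n\equiv\gamma$), so any sequence in $\SS$ has a subsequence whose connecting curves converge, and in particular $\mu^n=\mu^n_1\weakto^*\mu_1$ for some limiting endpoint; this exhibits $\SS$ as weakly$^*$ relatively compact. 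The main obstacle I expect is the endpoint control in the comparison-of-topologies step: one must ensure that the pointwise-in-time estimate on $\int\zeta\,\d\mu_t$ genuinely integrates to a bound by the distance (rather than merely the action), which is precisely where one invokes the characterization \eqref{charWm} as an infimum of $\int_0^1(\Phi)^{1/p}\,\d s$ so that the H\"older step yields a linear — not $p$-th-power — dependence on $\Wmod_{\phi,\gamma}$.
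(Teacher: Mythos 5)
Your proposal is correct and follows essentially the same route as the paper: the triangle inequality via the gluing and time-rescaling of Lemma \ref{lemma:gluing} together with the characterization \eqref{charWm}, and the comparison of topologies via the H\"older estimate from the proof of Proposition \ref{prop:consmass}, which bounds $\bigl|\int\zeta\,\d\mu_1-\int\zeta\,\d\mu_0\bigr|$ linearly in $\Wmod_{\phi,\gamma}(\mu_0,\mu_1)$ for $\zeta\in C^1_c(\R^d)$ (the paper then concludes by density of $C^1_c$ in $C_c$, a step you leave implicit). The only cosmetic difference is in the last assertion, where you invoke the compactness Proposition \ref{prop:comp} for near-optimal connecting curves, while the paper extracts relative compactness from the same integral estimate (equivalently, from the uniform bound $a\le\rho\le b$ on densities furnished by Proposition \ref{assc1}); both arguments are sound.
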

\begin{proof}
The verification of the axioms of the distance is straightforward except for the triangular inequality where we use
the gluing of solutions of Lemma \ref{lemma:gluing} and the characterization \eqref{charWm}.

In order to prove the topological property, reasoning as in the proof of Proposition \ref{prop:consmass} we obtain that
\begin{equation}
   \Big| \int_{\R^d} \zeta\,d\mu_1 -\int_{\R^d}  \zeta\,d\mu_0\Big|
   \leq  \sup|\nabla\zeta|(\h_{max}\gamma(\supp(\zeta)))^{1/q}\Wmod_{\phi,\gamma}(\mu_0,\mu_1)
\end{equation}
for every $\zeta\in C^1_c(\R^d)$.
Since $C^1_c(\R^d)$ is dense in $C_c(\R^d)$ we obtain the assertion on the topology induced by the distance
and on the relative compactness.
\end{proof}

The following lower semi-continuity result can be proved exactly as Theorem 5.6 of \cite{DNS} by using the compactness
Proposition \ref{prop:comp}.

\begin{proposition}[Lower semi-continuity]\label{prop:lsc}
\label{prop:semi} If $\gamma^n\weakto^*\gamma$ in $\MM^+_\loc(\R^d)$,
$\mu_0^n\weakto^*\mu_0$, $\mu_1^n\weakto^*\mu_1$ in $\MM_\loc(\R^d)$
and $\phi^n,\phi\in\AdmissibleDensityab$, such that $\phi^n\leq\phi^{n+1}$ and $\phi^n$ converges pointwise to $\phi$, then
\begin{equation}\label{lsc}
\liminf_{n\to +\infty}\Wmod_{\phi^n,\gamma^n}(\mu_0^n,\mu_1^n)\geq
\Wmod_{\phi,\gamma}(\mu_0,\mu_1) .
\end{equation}
\end{proposition}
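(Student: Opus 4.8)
The plan is to reduce the lower-semicontinuity statement to the compactness Proposition~\ref{prop:comp} by a standard diagonal argument, exploiting the monotone convergence $\phi^n\uparrow\phi$. First I would dispose of the trivial case: if $\liminf_n \Wmod_{\phi^n,\gamma^n}(\mu_0^n,\mu_1^n)=+\infty$ there is nothing to prove, so I may pass to a subsequence (not relabeled) along which the liminf is achieved as a finite limit and is uniformly bounded, say by some constant $M<+\infty$. Along this subsequence each distance $\Wmod_{\phi^n,\gamma^n}(\mu_0^n,\mu_1^n)$ is finite, so by the existence part of Proposition~\ref{prop:convexity} I can select a minimizing (constant-speed geodesic) couple $(\mu^n,\nnu^n)\in\CE_{\phi^n,\gamma^n}(0,1;\mu_0^n\to\mu_1^n)$ realizing the infimum, with $\int_0^1\Phi(\mu_s^n,\nnu_s^n|\gamma^n)\,ds$ uniformly bounded by $M^p$.

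The key step is then to apply the compactness Proposition~\ref{prop:comp} to the sequence $(\mu^n,\nnu^n)$. Here I must be slightly careful about which action density enters the energy bound. Since $\phi^n\le\phi^m$ for $n\le m$ and all the $\phi^n$ share the interior of domain $(a,b)\times\R^d$, the smallest density $\phi^1$ gives a lower bound $\Phi(\mu^n_s,\nnu^n_s|\gamma^n)\ge\Phi_{\phi^1}(\mu^n_s,\nnu^n_s|\gamma^n)$, so the hypothesis \eqref{bound} of Proposition~\ref{prop:comp} (applied with the fixed density $\phi^1\in\AdmissibleDensityab$) holds with the uniform bound $M^p$. Thus there is a further subsequence and a limit couple $(\mu,\nnu)\in\CE_{\phi^1,\gamma}(0,T)$ with $\mu_s^n\weakto^*\mu_s$ for every $s$ and $\nnu^n\weakto^*\nnu$. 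In particular $\mu_0=\mu_0$ and $\mu_1=\mu_1$ (matching the weak$^*$ limits of the endpoints given in the hypothesis), so $(\mu,\nnu)\in\CE(0,1;\mu_0\to\mu_1)$.

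The final and most delicate step is to pass the energy bound to the true limiting density $\phi$. The naive lower semicontinuity of $\Phi$ from Theorem~\ref{convfunc} uses a \emph{fixed} density, whereas here the densities $\phi^n$ vary. The trick is to fix a level $m$ and use monotonicity: for every $n\ge m$ one has $\Phi(\mu_s^n,\nnu_s^n|\gamma^n)\ge \Phi_{\phi^m}(\mu_s^n,\nnu_s^n|\gamma^n)$ because $\phi^m\le\phi^n$. Applying the lower semicontinuity of Theorem~\ref{convfunc} \emph{with the fixed density} $\phi^m$ to the weak$^*$-convergent sequences $(\gamma^n,\mu_s^n,\nnu_s^n)$ yields
\begin{equation*}
\int_0^1\Phi_{\phi^m}(\mu_s,\nnu_s|\gamma)\,ds
\le \liminf_{n\to+\infty}\int_0^1\Phi_{\phi^m}(\mu_s^n,\nnu_s^n|\gamma^n)\,ds
\le \liminf_{n\to+\infty}\int_0^1\Phi(\mu_s^n,\nnu_s^n|\gamma^n)\,ds,
\end{equation*}
where the second inequality again uses $\phi^m\le\phi^n$ for $n\ge m$. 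Now I let $m\to+\infty$: since $\phi^m\uparrow\phi$ pointwise and $\phi^m\ge0$, monotone convergence gives $\Phi_{\phi^m}(\mu_s,\nnu_s|\gamma)\uparrow\Phi_{\phi}(\mu_s,\nnu_s|\gamma)$ for $\gamma$-a.e.\ point (using the absolute-continuity structure from Proposition~\ref{assc1}), hence by the monotone convergence theorem in $s$ as well,
\begin{equation*}
\int_0^1\Phi(\mu_s,\nnu_s|\gamma)\,ds
\le \liminf_{n\to+\infty}\int_0^1\Phi(\mu_s^n,\nnu_s^n|\gamma^n)\,ds
= \liminf_{n\to+\infty}\Wmod_{\phi^n,\gamma^n}(\mu_0^n,\mu_1^n)^p.
\end{equation*}
Since $(\mu,\nnu)$ connects $\mu_0$ to $\mu_1$, the left-hand side bounds $\Wmod_{\phi,\gamma}(\mu_0,\mu_1)^p$ from above, which is exactly \eqref{lsc}. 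The main obstacle I anticipate is precisely this interchange of the double limit in $n$ and $m$; the monotone-density hypothesis $\phi^n\le\phi^{n+1}$ is what makes it work, letting me freeze $\phi^m$ to use the fixed-density lower semicontinuity and only afterwards send $m\to+\infty$ by monotone convergence.
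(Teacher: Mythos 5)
Your proof is correct and follows essentially the same route as the paper, which simply refers to Theorem 5.6 of \cite{DNS}: extract (near-)minimizing curves, apply the compactness Proposition \ref{prop:comp} with a frozen density to obtain a limit curve in $\CE(0,1;\mu_0\to\mu_1)$, and recover the energy bound for $\phi$ by combining the fixed-density lower semicontinuity at level $\phi^m$ with monotone convergence as $m\to+\infty$. The only cosmetic remarks are that the time-integrated lower semicontinuity you need is precisely \eqref{lscn} of Proposition \ref{prop:comp} (rather than Theorem \ref{convfunc} applied directly), and that one could use $1/n$-near-minimizers instead of exact geodesics, but neither affects the argument.
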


The following completeness result can be proved as in Theorem 5.7 of \cite{DNS} ad using Proposition \ref{prop:lsc}.
The final assertion about the equality of the signed mass follows from Proposition \ref{prop:consmass}.
\begin{proposition}[Completeness and equality of the mass]\label{prop:completness}
Given $\gamma\in\MM^+_\loc(\R^d)$ and $\phi \in \AdmissibleDensityab$,
we have that the space  $\MM_\loc(\R^d)$ endowed with the pseudo-distance $\Wmod_{\phi,\gamma}$
is complete.\\
Given a measure $\sigma \in \MM_\loc(\R^d)$, the space
$\MM[\sigma]:= \big\{\mu\in \MM_\loc(\R^d):
\Wmod_{\phi,\gamma}(\mu,\sigma)<+\infty\big \}$ is a complete metric space.\\
If $\mom_{-q}(\gamma)<+\infty$ then $\mu(\R^d)=\sigma(\R^d)$ for every $\mu\in\MM[\sigma]$.
\end{proposition}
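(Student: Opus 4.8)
The plan is to prove the three assertions of Proposition \ref{prop:completness} in turn, relying heavily on the lower semi-continuity Proposition \ref{prop:lsc} and the topological comparison from Proposition \ref{prop:basic}. For the completeness of $(\MM_\loc(\R^d),\Wmod_{\phi,\gamma})$, I would take a Cauchy sequence $(\mu^n)$ with respect to $\Wmod_{\phi,\gamma}$. Since Cauchy sequences are bounded, Proposition \ref{prop:basic} tells us that $(\mu^n)$ is weakly$^*$ relatively compact, so along a subsequence $\mu^{n_k}\weakto^*\mu$ for some limit $\mu\in\MM_\loc(\R^d)$. The key step is then to upgrade this weak$^*$ convergence to convergence in the distance: fixing $\eps>0$ and using the Cauchy property, for $m,n$ large one has $\Wmod_{\phi,\gamma}(\mu^m,\mu^n)<\eps$; letting $n=n_k\to\infty$ along the subsequence and applying Proposition \ref{prop:lsc} with the constant sequences $\gamma^n\equiv\gamma$ and $\phi^n\equiv\phi$ gives $\Wmod_{\phi,\gamma}(\mu^m,\mu)\le\liminf_k\Wmod_{\phi,\gamma}(\mu^m,\mu^{n_k})\le\eps$. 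Thus the whole Cauchy sequence converges to $\mu$ in the distance, establishing completeness.

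For the second assertion, I would observe that $\MM[\sigma]$ is the set of measures at finite $\Wmod_{\phi,\gamma}$-distance from $\sigma$, which is precisely the equivalence class of $\sigma$ under the relation ``finite distance''; by the triangle inequality this is well-defined and $\Wmod_{\phi,\gamma}$ restricts to a genuine (finite-valued) distance on it. Metric completeness then follows from the completeness just proved together with the fact that $\MM[\sigma]$ is closed under $\Wmod_{\phi,\gamma}$-limits: if $\mu^n\in\MM[\sigma]$ and $\mu^n\to\mu$ in the distance, then $\Wmod_{\phi,\gamma}(\mu,\sigma)\le\Wmod_{\phi,\gamma}(\mu,\mu^n)+\Wmod_{\phi,\gamma}(\mu^n,\sigma)<+\infty$, so $\mu\in\MM[\sigma]$ and the Cauchy sequence converges within the space.

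For the final assertion on the equality of signed masses, I would argue as follows. If $\mu\in\MM[\sigma]$, then $\Wmod_{\phi,\gamma}(\mu,\sigma)<+\infty$, so by the definition \eqref{def:Wm} and Proposition \ref{prop:convexity} there exists a connecting curve $(\mu_t,\nnu_t)_{t\in[0,1]}\in\CE_{\phi,\gamma}(0,1;\sigma\to\mu)$ with finite energy $\int_0^1\Phi(\mu_t,\nnu_t|\gamma)\,\d t<+\infty$. Since $\mom_{-q}(\gamma)<+\infty$ by hypothesis and $\phi\in\AdmissibleDensityab$, Proposition \ref{prop:consmass} applies directly to this finite-energy solution of the continuity equation: provided the initial mass $\mu_0(\R^d)=\sigma(\R^d)$ is finite, the total signed mass is conserved, giving $\mu(\R^d)=\mu_1(\R^d)=\mu_0(\R^d)=\sigma(\R^d)$.

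I expect the main obstacle to be purely the first step, namely the careful interchange of limits in the completeness argument: one must ensure that the subsequential weak$^*$ limit really serves as the distance limit of the entire sequence, which requires invoking the lower semi-continuity in the correct variable while holding the other argument $\mu^m$ fixed. The remaining parts are essentially formal, following from the triangle inequality and a direct citation of Proposition \ref{prop:consmass}; the only subtlety there is checking that $\sigma$ itself has finite total mass so that mass conservation is meaningful, which is implicit in the hypothesis $\mom_{-q}(\gamma)<+\infty$ together with finiteness of the distance.
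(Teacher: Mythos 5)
Your proposal is correct and follows essentially the same route as the paper, which simply defers to \cite[Theorem 5.7]{DNS} together with Proposition \ref{prop:lsc} for completeness and to Proposition \ref{prop:consmass} for the mass equality; the compactness-plus-lower-semicontinuity argument you spell out is exactly the content of that citation. The only caveat, which you already note yourself, is that the final assertion is meaningful only when $\sigma(\R^d)\in\R$ so that Proposition \ref{prop:consmass} applies, a point the paper leaves equally implicit.
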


The following results follows from 3 and 4 of Theorem \ref{convfunc}.

\begin{proposition}[Monotonicity]\label{prop:monotonicity}
If $\phi_1\leq\phi_2$ then
\begin{equation*}\label{mon1}
\Wmod_{\phi_1,\gamma}(\mu_0,\mu_1)\leq \Wmod_{\phi_2,\gamma}(\mu_0,\mu_1),
\end{equation*}
for every $\mu_0$, $\mu_1\in\MM_\loc(\R^d)$.\\
Moreover, if $(0,0)\in\dom(\phi_i)$, $i=1,2$ and $\gamma_1\leq\gamma_2$ then
\begin{equation*}\label{mon2}
\Wmod_{\phi_1,\gamma_2}(\mu_0,\mu_1)\leq \Wmod_{\phi_2,\gamma_1}(\mu_0,\mu_1),
\end{equation*}
for every $\mu_0$, $\mu_1\in\MM_\loc(\R^d)$.
\end{proposition}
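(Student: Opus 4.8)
The plan is to lift the two monotonicity properties of the action functional $\Phi$ recorded in Theorem \ref{convfunc} up to the level of the distance, exploiting that $\Wmod_{\phi,\gamma}$ is an infimum of $p$-th roots of time-integrals of $\Phi$ taken over admissible connecting curves. Throughout, denote by $\Phi_i$ the action functional of Definition \ref{def:Phi} associated with $\phi_i$.

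For the first inequality, fix $\gamma$ and assume $\phi_1\le\phi_2$. First I would check that the action functionals are ordered, $\Phi_1(\mu,\nnu|\gamma)\le\Phi_2(\mu,\nnu|\gamma)$ for every $(\mu,\nnu)$: the absolutely continuous parts compare by pointwise monotonicity, $\int\phi_1(\rho,\ww)\,d\gamma\le\int\phi_2(\rho,\ww)\,d\gamma$, while the singular (recession) parts coincide, since for any $\phi\in\AdmissibleDensityab$ one has $\phi^\infty(0,0)=0$ and $\phi^\infty=+\infty$ off the origin (the remark following Lemma \ref{homog}). Consequently every curve of finite $\phi_2$-energy has finite $\phi_1$-energy, so $\CE_{\phi_2,\gamma}(0,1;\mu^0\to\mu^1)\subseteq\CE_{\phi_1,\gamma}(0,1;\mu^0\to\mu^1)$, and along any such curve the $\phi_1$-action integral is not larger than the $\phi_2$-one. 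Passing to the $p$-th root and taking the infimum over the (smaller) set of $\phi_2$-competitors yields $\Wmod_{\phi_1,\gamma}(\mu^0,\mu^1)\le\Wmod_{\phi_2,\gamma}(\mu^0,\mu^1)$.

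For the second inequality I would chain the first one with the $\gamma$-monotonicity from item (2) of Theorem \ref{convfunc}. Under the hypotheses $(0,0)\in\dom(\phi_1)$ and $\gamma_1\le\gamma_2$, that item gives $\Phi_1(\mu,\nnu|\gamma_2)\le\Phi_1(\mu,\nnu|\gamma_1)$ for every $(\mu,\nnu)$ with supports contained in $\supp(\gamma_i)$. Since $\gamma_1\le\gamma_2$ forces $\supp(\gamma_1)\subseteq\supp(\gamma_2)$, any finite-energy competitor for $\Wmod_{\phi_1,\gamma_1}$ is an admissible competitor for $\Wmod_{\phi_1,\gamma_2}$ of no larger action, so $\Wmod_{\phi_1,\gamma_2}(\mu_0,\mu_1)\le\Wmod_{\phi_1,\gamma_1}(\mu_0,\mu_1)$. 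Combining this with the first inequality applied with $\gamma=\gamma_1$ gives
$$\Wmod_{\phi_1,\gamma_2}(\mu_0,\mu_1)\le\Wmod_{\phi_1,\gamma_1}(\mu_0,\mu_1)\le\Wmod_{\phi_2,\gamma_1}(\mu_0,\mu_1),$$
which is the claim.

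The only delicate point is bookkeeping rather than analysis: one must verify that the set of admissible connecting curves genuinely enlarges (or is at least preserved) when passing from the larger functional to the smaller one, so that the infima compare in the correct direction, and that the support constraint built into Definition \ref{def:Phi} is respected when transporting a $\gamma_1$-competitor into the $\gamma_2$-problem. The hypothesis $(0,0)\in\dom(\phi_1)$ is genuinely needed for the $\gamma$-step, as the non-monotonicity Example above shows that monotonicity in $\gamma$ may fail otherwise; by contrast, the $\phi$-step requires no such assumption precisely because all recession functionals in $\AdmissibleDensityab$ coincide.
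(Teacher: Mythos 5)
Your proof is correct and follows the same route the paper intends: the paper omits the argument entirely, merely remarking that the proposition follows from the monotonicity properties of $\Phi$ in Theorem \ref{convfunc}, and your write-up simply supplies the bookkeeping (comparison of the action functionals, inclusion of competitor sets, and the chaining through $\Wmod_{\phi_1,\gamma_1}$) that this citation leaves implicit. The observations that the recession parts coincide on $\AdmissibleDensityab$ and that $\gamma_1\le\gamma_2$ forces $\supp(\gamma_1)\subseteq\supp(\gamma_2)$ are exactly the right details to check.
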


\begin{proposition}[Approximation by convolution]\label{prop:convolution}
Let $k\in C^\infty_c(\R^d)$ be a
nonnegative convolution kernel, with $\int_{\R^d}k(x)\,\dx =1$ and
$\supp(k)=\overline B_1(0)$, and let
$k_\eps(x):=\eps^{-d}k(x/\eps)$.
For every $\mu_0,\mu_1\in\MM(\R^d)$
\begin{align}
    \Wmod_{\phi,\gamma\ast k_\eps}(\mu_0\ast k_\eps,\mu_1\ast k_\eps)&\le
    \Wmod_{\phi,\gamma}(\mu_0,\mu_1)\quad
    \forall\, \eps>0;\\
    \lim_{\eps\to 0} \Wmod_{\phi,\gamma\ast k_\eps}(\mu_0\ast k_\eps,\mu_1\ast k_\eps)&=
    \Wmod_{\phi,\gamma}(\mu_0,\mu_1).
\end{align}
\end{proposition}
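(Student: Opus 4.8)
The plan is to treat the two assertions separately: the inequality follows from the convolution-monotonicity of $\Phi$ (item 3 of Theorem \ref{convfunc}), while the limit follows by combining that inequality with the lower semicontinuity Proposition \ref{prop:lsc}. For the inequality I would start from an arbitrary competitor $(\mu,\nnu)\in\CE_{\phi,\gamma}(0,1;\mu_0\to\mu_1)$ and convolve it in space with $k_\eps$, setting $\hat\mu_t:=\mu_t\ast k_\eps$ and $\hat\nnu_t:=\nnu_t\ast k_\eps$. Since convolution with the smooth kernel $k_\eps$ commutes with spatial differentiation, one has $\partial_t\hat\mu_t+\nabla\cdot\hat\nnu_t=(\partial_t\mu_t+\nabla\cdot\nnu_t)\ast k_\eps=0$; at the level of the distributional identity \eqref{cont2} this is made rigorous by testing against $\zeta\ast\check k_\eps$, where $\check k_\eps(x)=k_\eps(-x)$. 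Thus $(\hat\mu,\hat\nnu)$ solves the continuity equation and connects $\mu_0\ast k_\eps$ to $\mu_1\ast k_\eps$, the endpoint values being correct because convolution is weak$^*$ continuous and $t\mapsto\mu_t$ is weak$^*$ continuous. By item 3 of Theorem \ref{convfunc} we have $\Phi(\hat\mu_t,\hat\nnu_t|\gamma\ast k_\eps)\le\Phi(\mu_t,\nnu_t|\gamma)$ pointwise in $t$; integrating over $[0,1]$ shows in particular that $(\hat\mu,\hat\nnu)$ has finite energy, hence lies in $\CE_{\phi,\gamma\ast k_\eps}(0,1;\mu_0\ast k_\eps\to\mu_1\ast k_\eps)$, and
\[
\int_0^1\Phi(\hat\mu_s,\hat\nnu_s|\gamma\ast k_\eps)\,\d s\le\int_0^1\Phi(\mu_s,\nnu_s|\gamma)\,\d s .
\]
Taking $p$-th roots and passing to the infimum over all competitors $(\mu,\nnu)$ yields the first inequality.

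For the limit, the inequality just proved already gives $\limsup_{\eps\to0}\Wmod_{\phi,\gamma\ast k_\eps}(\mu_0\ast k_\eps,\mu_1\ast k_\eps)\le\Wmod_{\phi,\gamma}(\mu_0,\mu_1)$. For the reverse liminf bound I would invoke Proposition \ref{prop:lsc} with the constant sequence $\phi^n\equiv\phi$, which trivially satisfies $\phi^n\le\phi^{n+1}$ and converges pointwise to $\phi$. Along any sequence $\eps_n\to0$ one has the standard weak$^*$ convergences $\gamma\ast k_{\eps_n}\weakto^*\gamma$, $\mu_0\ast k_{\eps_n}\weakto^*\mu_0$ and $\mu_1\ast k_{\eps_n}\weakto^*\mu_1$, so Proposition \ref{prop:lsc} gives $\liminf_n\Wmod_{\phi,\gamma\ast k_{\eps_n}}(\mu_0\ast k_{\eps_n},\mu_1\ast k_{\eps_n})\ge\Wmod_{\phi,\gamma}(\mu_0,\mu_1)$. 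Since the sequence $\eps_n\to0$ is arbitrary, this is the full liminf, and together with the limsup bound it yields the claimed equality.

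The routine parts are the weak$^*$ convergence of the convolutions and the bookkeeping of the endpoints, which are classical. The one point requiring genuine care is the interchange of convolution with the continuity equation at the level of $\R^d$-valued measures: I would make it precise via the duality test against $\zeta\ast\check k_\eps$ and, using the disintegration $\nnu=\int_0^1\nnu_t\,\d t$, convolve in space uniformly in $t$ so that the integrability conditions \eqref{fin1} are preserved. This verification, rather than the energy estimate (which is immediate from item 3 of Theorem \ref{convfunc}), is the main obstacle.
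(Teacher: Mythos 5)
Your proposal is correct and follows exactly the route the paper intends: the paper derives this proposition from item 3 of Theorem \ref{convfunc} (convolution monotonicity of $\Phi$ applied to an arbitrary competitor in $\CE_{\phi,\gamma}(0,1;\mu_0\to\mu_1)$) for the inequality, and combines it with the lower semicontinuity of Proposition \ref{prop:lsc} under weak$^*$ convergence of $\gamma\ast k_\eps$, $\mu_i\ast k_\eps$ for the limit, just as you do. Your care about testing the continuity equation against $\zeta\ast\check k_\eps$ and preserving \eqref{fin1} is exactly the routine verification carried out in the reference \cite{DNS} to which the paper defers.
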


The following proposition deals with a control of the moments and a comparison between
the convergence with respect to  $\Wmod_{\phi,\gamma}$
and the standard Wasserstein distance defined in \eqref{def:W}.

\begin{proposition}
Let $\gamma\in\MM^+_\loc(\R^d)$ be satisfying $\mom_r(\gamma)<+\infty$ for some $r\in\R$
and $\phi \in \AdmissibleDensityab$.\\
If $\mu_0,\ \mu_1\in\MM_\loc(\R^d)$ satisfy $\Wmod_{\phi,\gamma}(\mu_0,\mu_1)<+\infty$,
then, setting $C:=\max\{|a|,|b|\}$, we have
\begin{equation}\label{momest}
    \mom_\delta (|\mu_i|) \leq C \mom_r(\gamma), \qquad \text{for }i=0,1, \quad \forall\,\delta\leq r.
\end{equation}
If $r\geq 1$ and $a\geq 0$ then the convergence with respect to  $\Wmod_{\phi,\gamma}$ in $\MM[\sigma]$,
for some non-negative measure $\sigma$ satisfying $\sigma(\R^d)<+\infty$,
implies the convergence with respect to the $r$-Wasserstein distance $W_r$.
\end{proposition}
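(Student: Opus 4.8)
The plan is to treat the two assertions separately, handling the moment estimate \eqref{momest} first since the convergence statement builds on it. Throughout I would exploit the density bound of Proposition \ref{assc1} as the single structural fact that drives everything.

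For \eqref{momest}: finiteness of $\Wmod_{\phi,\gamma}(\mu_0,\mu_1)$ supplies a curve $(\mu,\nnu)\in\CE_{\phi,\gamma}(0,1;\mu_0\to\mu_1)$ with $\int_0^1\Phi(\mu_t,\nnu_t|\gamma)\,\d t<+\infty$, hence $\Phi(\mu_t,\nnu_t|\gamma)<+\infty$ for a.e.\ $t$. Proposition \ref{assc1} then gives $\mu_t=\rho_t\gamma$ with $a\le\rho_t\le b$ for a.e.\ $t$, i.e.\ $a\gamma\le\mu_t\le b\gamma$. Since $\{\nu:a\gamma\le\nu\le b\gamma\}$ is weak$^*$-closed and the curve is weak$^*$-continuous, the same bound passes to the endpoints, so $\mu_i=\rho_i\gamma$ with $|\rho_i|\le C$, whence $|\mu_i|\le C\gamma$ as measures. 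For $\delta\le r$ one then uses $|x|^\delta\le|x|^r$ on $\R^d\setminus B(0,1)$ to get $\mom_\delta(|\mu_i|)\le C\mom_\delta(\gamma)\le C\mom_r(\gamma)$, the last step being the elementary monotonicity of generalized moments recorded after Definition \ref{def:momentum}.

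For the convergence statement I would first record the consequences of $a\ge0$. Every $\mu\in\MM[\sigma]$ satisfies $\Wmod_{\phi,\gamma}(\mu,\sigma)<+\infty$, so by the argument above $\mu=\rho\gamma$ with $0\le\rho\le b$; in particular each such $\mu$ is a \emph{nonnegative} measure dominated by $b\gamma$. Moreover $\mom_r(\gamma)<+\infty$ together with $r\ge1>-q$ forces $\mom_{-q}(\gamma)<+\infty$, so Proposition \ref{prop:consmass} (equivalently the last assertion of Proposition \ref{prop:completness}) gives the common finite mass $\mu(\R^d)=\sigma(\R^d)<+\infty$ for all competitors. Now if $\mu_n\to\mu$ in $\Wmod_{\phi,\gamma}$, Proposition \ref{prop:basic} already yields vague convergence $\mu_n\weakto^*\mu$. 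The domination $\mu_n\le b\gamma$ is the crucial point: because $\mom_r(\gamma)<+\infty$ and $r\ge1>0$, it gives simultaneously the uniform tail bound $\mu_n(\{|x|>R\})\le b\,\gamma(\{|x|>R\})\to0$ (tightness) and the uniform integrability $\sup_n\int_{\{|x|>R\}}|x|^r\,\d\mu_n\le b\int_{\{|x|>R\}}|x|^r\,\d\gamma\to0$ as $R\to+\infty$.

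From here the conclusion is routine. Vague convergence, tightness, and the common total mass upgrade to narrow convergence $\mu_n\weakto\mu$ against $C^0_b(\R^d)$; narrow convergence together with uniform integrability of $|x|^r$ yields $\int|x|^r\,\d\mu_n\to\int|x|^r\,\d\mu$; and narrow convergence plus convergence of the $r$-th moments is exactly the characterization of convergence in $W_r$ for measures of fixed finite mass (see \cite{AGS}). The main obstacle is precisely securing \emph{convergence} of the $r$-th moments rather than mere boundedness: a uniform bound on $\mom_r(\mu_n)$ alone would not suffice, and it is the pointwise domination $\rho_n\le b$ coming from $a\ge0$ — not any finer use of the action functional — that furnishes the uniform integrability and closes the argument.
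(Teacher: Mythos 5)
Your proposal is correct and follows essentially the same route as the paper: the moment bound comes from the pointwise density bound $|\rho_i|\le C$ of Proposition \ref{assc1} (so $|\mu_i|\le C\gamma$) plus monotonicity of the generalized moments, and the $W_r$-convergence is obtained from weak$^*$ convergence, conservation of the common finite mass, tightness, and uniform integrability of the $r$-th moments, all furnished by the domination $\mu_n\le b\gamma$. The only addition is your explicit justification that the bound $a\gamma\le\mu_t\le b\gamma$ passes to the endpoints by weak$^*$ closedness, a point the paper leaves implicit.
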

\begin{proof}
Denoting by $1\vee |x|=\max\{1,|x|\}$, given $C=\max\{|a|,|b|\}$, $\delta\leq r$ and a Borel set $A\subset \R^d$,
by Proposition \ref{assc1} we obtain
\begin{equation}\label{b}
    \int_A (1\vee |x|)^\delta \,\d|\mu_i|(x) = \int_A (1\vee |x|)^\delta |\rho_i(x)|\,\d\gamma(x)
    \leq C \int_A (1\vee |x|)^\delta \,\d\gamma(x) \leq  C \int_A (1\vee |x|)^r \,\d\gamma(x).
\end{equation}
Choosing $A=\R^d$ in \eqref{b} we obtain \eqref{momest}.\\
If $\mu_n $ is a sequence in $\MM[\sigma]$ converging to $\mu$ with respect to $\Wmod_{\phi,\gamma}$,
then, by Proposition \ref{prop:basic}, $\mu_n$ weakly$^*$ converges to $\mu$ and,
by Proposition \ref{prop:completness}, $\mu_n(\R^d)=\mu(\R^d)=\mu(\sigma)$ because of the assumption on the moment of $\gamma$ and $r\geq 1$.
By \eqref{b} with $\delta=0$ we have that the sequence $\mu_n$ is tight and then $\mu_n$ narrowly converges to $\mu$.
Since \eqref{b} implies that the $r$-moments of $\mu_n$ are uniformly equiintegrable we obtain that
(see Lemma 5.1.7 of \cite{AGS}) $\mom_r(\mu_n)$ converges to $\mom_r(\mu)$ and we conclude.
\end{proof}

In particular the previous Proposition applies to the case $\gamma(\R^d)<+\infty$.

In the next proposition we state a simple comparison with the standard Wasserstein distance \eqref{def:W}.
\begin{proposition}[Comparison with Wasserstein distance]\label{prop:comparison}
Let $p>1$, $\phi\in\AdmissibleDensity (0,M)$, $\Omega\subset\R^d$ an open convex set
and $\gamma_\Omega=\chi_\Omega\Leb{d}$.
If $\mu_i$, $i=0,1$, are two absolutely continuous measures with respect to $\gamma_\Omega$,
$\mu_i=\rho_i\gamma_\Omega$, such that $0\leq \rho_i(x) \leq M'<M$,
$\mom_p(\mu_i)<+\infty$ for $i=0,1$  and $\mu_0(\R^d)=\mu_1(\R^d)$,
then there exists a constant $C$, depending only on $M'$, $\phi$ and $p$, such that
\begin{equation}\label{comparison}
    \Wmod_{\phi,\gamma_\Omega}(\mu_0,\mu_1) \leq C W_p(\mu_0,\mu_1) < +\infty,
\end{equation}
where $W_p$ denotes the standard $p$-Wasserstein distance.
\end{proposition}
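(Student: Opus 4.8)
The plan is to reduce the general density $\phi$ to the model density $\phi_h$ and then to estimate the action of the classical $W_p$-geodesic.

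First I would apply Proposition~\ref{prop:phile}: since $\phi\in\AdmissibleDensity(0,M)$ there is a concave $h$ with $\Int(\dom(h))=(0,M)$, $h(r)>0$ on $(0,M)$, and $\phi(r,\ww)\le\phi_h(r,\ww)$. By Proposition~\ref{prop:monotonicity} this yields $\Wmod_{\phi,\gamma_\Omega}(\mu_0,\mu_1)\le\Wmod_{\phi_h,\gamma_\Omega}(\mu_0,\mu_1)$, so it suffices to bound the right-hand side. For $\phi_h$ the relevant pointwise inequality is
$$
\phi_h(\rho,\rho\vv)=\frac{\rho^p|\vv|^p}{h(\rho)^{p-1}}
=\rho\Big(\frac{\rho}{h(\rho)}\Big)^{p-1}|\vv|^p\le C\,\rho\,|\vv|^p
\qquad(0\le\rho\le M'),
$$
with $C=(M'/h(M'))^{p-1}$. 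Indeed $h$ is concave and nonnegative, so $\rho\mapsto h(\rho)/\rho$ is nonincreasing, hence $\rho\mapsto\rho/h(\rho)$ is nondecreasing on $(0,M)$ and bounded on $(0,M']$ by $M'/h(M')$.

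Next I would use the Benamou--Brenier representation \eqref{dynamicalformulationW}. Let $(\mu_t)_{t\in[0,1]}$ be the displacement interpolation (the constant-speed $W_p$-geodesic) between $\mu_0$ and $\mu_1$; then $\mu_t=\rho_t\Leb d$, $\nnu_t=\rho_t\vv_t\Leb d$, $(\mu_t,\nnu_t)\in\CE(0,1;\mu_0\to\mu_1)$, and $\int_0^1\!\int_{\R^d}|\vv_t|^p\rho_t\,\dx\,\dt=W_p^p(\mu_0,\mu_1)$. Because $\Omega$ is convex and $\supp(\mu_i)\subseteq\overline\Omega$, every interpolated point $(1-t)x+tT(x)$ (with $T$ the optimal map) lies in $\overline\Omega$, and since $\mu_t\ll\Leb d$ while $\partial\Omega$ is Lebesgue-negligible, $\mu_t=\rho_t\gamma_\Omega$ and $\nnu_t=\rho_t\vv_t\gamma_\Omega$. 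The crucial point is the maximum principle $0\le\rho_t\le M'$ for every $t$: writing $X_t(x)=(1-t)x+tT(x)$ and $J_t=\det(DX_t)$ one has $\rho_t(X_t)=\rho_0/J_t$, and concavity of $t\mapsto J_t^{1/d}$ gives $J_t\ge\min\{1,J_1\}$, whence $\rho_t(X_t(x))\le\max\{\rho_0(x),\rho_1(T(x))\}\le M'$.

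Finally I would combine these. On $\Omega$, using the pointwise bound with $\rho=\rho_t\le M'$ (the integrand vanishes where $\rho_t=0$, since there $\ww_t=\rho_t\vv_t=0$),
$$
\Phi(\mu_t,\nnu_t|\gamma_\Omega)=\int_\Omega\frac{\rho_t^p|\vv_t|^p}{h(\rho_t)^{p-1}}\,\dx
\le\Big(\frac{M'}{h(M')}\Big)^{p-1}\int_\Omega\rho_t|\vv_t|^p\,\dx.
$$
Integrating in $t$ and invoking the Benamou--Brenier identity gives $\int_0^1\Phi(\mu_t,\nnu_t|\gamma_\Omega)\,\dt\le(M'/h(M'))^{p-1}W_p^p(\mu_0,\mu_1)$, so by \eqref{def:Wm}
$$
\Wmod_{\phi,\gamma_\Omega}(\mu_0,\mu_1)\le\Wmod_{\phi_h,\gamma_\Omega}(\mu_0,\mu_1)
\le\Big(\frac{M'}{h(M')}\Big)^{1/q}W_p(\mu_0,\mu_1),
$$
$q$ being the conjugate exponent of $p$, and the constant $C=(M'/h(M'))^{1/q}$ depends only on $M'$, $\phi$ (through $h$) and $p$. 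Finiteness $W_p(\mu_0,\mu_1)<+\infty$ follows from $\mu_0(\R^d)=\mu_1(\R^d)$ together with $\mom_p(\mu_i)<+\infty$.

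I expect the main obstacle to be the $L^\infty$ maximum principle along the $W_p$-geodesic for general $p$: unlike the quadratic case, the optimal map for the cost $|y-x|^p$ is not the gradient of a convex function, so $DT$ need not be symmetric and the concavity of $t\mapsto J_t^{1/d}$ is not immediate from Minkowski's determinant inequality. To make this rigorous I would first treat smooth densities bounded away from $0$ and $M$, where regularity of the optimal map yields $DT$ diagonalizable with positive eigenvalues (so that $J_t^{1/d}$ is the geometric mean of the positive affine functions $t\mapsto(1-t)+t\lambda_i$, hence concave), and then recover the general case by approximation, using Proposition~\ref{prop:convolution} together with the lower semicontinuity of Proposition~\ref{prop:lsc}.
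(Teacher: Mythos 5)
Your proposal follows essentially the same route as the paper: reduce $\phi$ to $\phi_h$ via Proposition~\ref{prop:phile}, use concavity of $h$ to get $\phi(\rho,\ww)\le (M'/h(M'))^{p-1}|\ww|^p/\rho^{p-1}$ on $(0,M')$, and evaluate the action along the $W_p$-displacement interpolant, whose density stays below $M'$ by the maximum principle and whose support stays in $\overline\Omega$ by convexity. The only difference is that the paper simply cites the proof of Theorem~5.24 of \cite{DNS} for the bound $\rho_t\le M'$ along the geodesic, whereas you sketch the Jacobian-concavity argument behind it (correctly identifying the diagonalizability of $DT$ as the delicate point for general $p$); your argument is correct.
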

\begin{proof}
Let $h$ be given by Proposition \ref{prop:phile}.
Since $h$ is concave and positive on $(0,M)$, we have that
$$ h(\rho)\geq \frac{h(M')}{M'}\rho, \qquad \forall\,\rho\in(0,M'),$$
and, consequently,
\begin{equation}\label{compar}
    \phi(\rho,\ww)\leq \frac{|\ww|^p}{h(\rho)^{p-1}}\leq \left(\frac{M'}{h(M')}\right)^{p-1}\frac{|\ww|^p}{\rho^{p-1}}
    \qquad \forall\rho\in(0,M').
\end{equation}
Since the $p$-moments of $\mu_0$ and $\mu_1$ are finite,
taking the geodesic interpolant $\mu_t$ between $\mu_0$ and $\mu_1$
for the standard $p$-Wasserstein distance $W_p(\mu_0,\mu_1)$, and denoting by $\rho_t$ the density of $\mu_t$,
we have that $\rho_t \leq M'$ (see the proof of \cite[Theorem 5.24]{DNS}). Since $\Omega$ is convex,
the support of $\mu_t$ belongs to $\overline\Omega$ and, denoting by $\psi(\rho,\ww):= \frac{|\ww|^p}{\rho^{p-1}}$,
we have that $\Wmod_{\psi,\gamma_\Omega}=W_p$ for all the measures with support in $\overline\Omega$.
Then, by \eqref{compar}, and recalling Proposition \ref{prop:monotonicity} we obtain \eqref{comparison}.
\end{proof}

\section{Measures at finite distance and convergence}\label{connectivity}
In this section we give sufficient conditions for the finiteness of the distance between two measures.
We study also the relation between the convergence with respect to the distance and the weak-$*$ one.
The first result concerns measures defined on the whole space $\R^d$ with the reference measure $\gamma=\Leb{d}$,
whereas the second one deals with measures defined on a bounded convex domain $\Omega$
with the reference measure $\gamma_\Omega=\Leb{d}_{|\Omega}$.

\subsection{The case of reference measure  $\Leb{d}$}

\begin{theorem}[Connectivity in $\R^d$]\label{th:connectivityrd}
Let $p>1$ and $\phi\in\AdmissibleDensity (0,M)$.
If $\mu_i$, $i=0,1$, are two absolutely continuous measures $\mu_i=\rho_i\Leb{d}$, such that $0\leq \rho_i(x) \leq M$,
$\mom_p(\mu_i)<+\infty$ for $i=0,1$  and $\mu_0(\R^d)=\mu_1(\R^d)$,
then there exists a constant $C>0$ depending only on $\phi$, $d$ and $p$ such that
\begin{equation}\label{eq:df}
    \Wmod_{\phi,\Leb{d}}(\mu_0,\mu_1) \leq C(\mom_p(\mu_0)+\mom_p(\mu_1)) < +\infty.
\end{equation}
\end{theorem}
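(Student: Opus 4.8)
The plan is to construct an explicit connecting curve with finite action, so I would aim to bound $\Wmod_{\phi,\Leb{d}}$ from above rather than computing it exactly. The key is that the admissible density $\phi$ lies below some $\phi_h$ with $h$ concave and positive on $(0,M)$ (Proposition \ref{prop:phile}), and by monotonicity (Proposition \ref{prop:monotonicity}) it suffices to bound $\Wmod_{\phi_h,\Leb{d}}$. The natural strategy is to connect $\mu_0$ to $\mu_1$ through an intermediate ``spread out'' state: first dilate $\mu_0$ by a large factor so its density becomes small (driving $\rho$ near $0$, where $h(\rho)\approx h'(0)\rho$ behaves like the classical linear mobility), then perform the analogous dilation of $\mu_1$ in reverse. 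Concretely, I would build the three-segment curve $\mu_0 \to (\text{dilated } \mu_0) \to (\text{dilated } \mu_1)\to \mu_1$ and glue the pieces using Lemma \ref{lemma:gluing}.

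The central segments are the dilations. For the first I would set $\rho_t(x):=e^{-dt}\rho_0(e^{-t}x)$, $\nnu_t:=x\,\rho_t(x)\Leb{d}$, exactly the curve of the earlier Example; this solves the continuity equation, conserves mass, and keeps $0\le\rho_t\le M$. The point of dilating is that after time $T$ the density is multiplied by $e^{-dT}$, so it becomes uniformly small, and on small densities concavity gives $h(\rho)\ge c\,\rho$ for a constant $c$ depending on $h$ (equivalently on $\phi$). On that regime $\phi_h(\rho,\ww)\le c^{-(p-1)}|\ww|^p/\rho^{p-1}$, which is the classical Wasserstein integrand, so the action of the dilation is controlled by $\int|\ww_t|^p/\rho_t^{p-1}\,\dx\,\dt$. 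Using the self-similar change of variables as in the Example, this integral reduces to $\mom_p(\mu_0)$ times a factor $\int_0^T e^{t((1-d)p+d)}\cdots$; choosing the dilation time carefully so the density is small while the action remains summable gives a bound of the form $C\,\mom_p(\mu_0)$. The same construction run backwards handles $\mu_1$, contributing $C\,\mom_p(\mu_1)$.

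The remaining task is to connect the two dilated measures to each other. Once both densities are uniformly small and both have finite $p$-moments (dilation keeps moments finite, as the moment integral only picks up a bounded factor over a finite time interval), I would connect them by a standard Wasserstein geodesic and invoke Proposition \ref{prop:comparison}: since the densities stay below a threshold $M'<M$ along the geodesic and the measures have equal mass (guaranteed by $\mu_0(\R^d)=\mu_1(\R^d)$ and mass conservation of the dilations), the comparison estimate $\Wmod_{\phi,\Leb{d}}\le C\,W_p$ applies, and $W_p$ between the two dilated measures is finite and controlled by their $p$-moments. Summing the three contributions via the triangle inequality yields \eqref{eq:df}.

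The main obstacle I anticipate is the interplay between the dilation time and the size of the resulting density, in the regime $d\le p$ where the earlier Example's summability condition $d>q$ can fail. A single infinite dilation need not have finite action, so I would dilate only for a finite time $T$, chosen large enough that $e^{-dT}M\le M'<M$ but the action integral $\int_0^T e^{t((1-d)p+d)}\,\dt$ stays bounded; verifying that such a $T$ gives a constant depending only on $\phi$, $d$, $p$ (and not on the measures) is the delicate bookkeeping. One must also check that the intermediate densities genuinely drop below the threshold $M'$ needed to invoke Proposition \ref{prop:comparison}, and that the constant $c$ in $h(\rho)\ge c\rho$ can be taken uniform on $[0,M']$; both follow from concavity of $h$ but require the explicit choice of $M'$ to be fixed first.
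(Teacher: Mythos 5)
Your overall architecture coincides with the paper's: reduce to $\phi_h$ via Proposition \ref{prop:phile} and Proposition \ref{prop:monotonicity}, dilate each endpoint so that its density falls below a threshold $M'<M$, bound each dilation segment by $C\,\mom_p(\mu_i)$, join the two dilated measures through Proposition \ref{prop:comparison}, and conclude by the triangle inequality. The gap is in your estimate of the dilation segments. You control their action by writing $h(\rho)\ge c\rho$ and hence $\phi_h(\rho,\ww)\le c^{-(p-1)}|\ww|^p/\rho^{p-1}$, but that lower bound on $h$ holds only on $[0,M']$ with $M'<M$, while the dilation \emph{starts} at a density that may equal $M$, where $h$ may vanish (e.g.\ $h(\rho)=\rho(M-\rho)$). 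There the integrand $\phi_h(\rho_0,\ww)$ is literally $+\infty$ wherever $\ww\ne 0$, so your comparison with the classical Wasserstein integrand is invalid precisely on the initial portion of the curve, which is the only delicate part. Worse, with your parametrization $\rho_t(x)=e^{-dt}\rho_0(e^{-t}x)$, $\vv_t(x)=x$, one has $\phi_h(\rho_t,\ww_t)=|x|^p\rho_t/(M-\rho_t)^{p-1}$ and $M-\rho_t\gtrsim t$ near $t=0$, so $\int_0^T\Phi\,\d t$ behaves like $\int_0^T t^{-(p-1)}\,\d t$, which diverges for every $p\ge 2$ whenever $\rho_0$ saturates $M$ on a set of positive measure. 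So the construction as written does not produce a finite-energy curve in general.

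The paper resolves exactly this point by choosing the dilation $T_t(x)=(1+t^p)x$ on $[0,1]$: the velocity $\vv_t=pt^{p-1}x/(1+t^p)$ vanishes like $t^{p-1}$ at $t=0$, and the push-forward bound $\rho_t\le (1+t^p)^{-d}$ gives $(1-\rho_t)^{-(p-1)}\le \big((1+t^p)^d/(dt^p)\big)^{p-1}$; the factor $(pt^{p-1})^p$ in $|\vv_t|^p$ exactly cancels the $t^{-p(p-1)}$ singularity, yielding $\int_0^1\Phi\,\d t\le C_{p,d}\,\mom_p(\mu_i)$ with no smallness assumption on $1-\rho_0$. (An alternative repair of your argument would be to invoke the reparametrization-invariant characterization \eqref{charWm} and observe that $\Phi(\mu_t,\nnu_t)^{1/p}\sim t^{-(p-1)/p}$ is integrable at $t=0$; but you would then need extra work to recover a bound linear in $\mom_p(\mu_i)$.) In short: the obstacle is not the condition $d>q$ from the Example, which concerns infinite-time dilation, but the degeneracy of $h$ at $\rho=M$ at time zero, and your proposal does not supply the mechanism that neutralizes it.
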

We observe that the inequality \eqref{eq:df} holds in the case of the standard Wasserstein distance
(it is a very easy consequence of the definition \eqref{def:W}).
\begin{proof}
Let $h$ be given by Proposition \ref{prop:phile}.
Since $h$ is concave and non-negative, there exists $\tilde h:[0,M] \to [0,+\infty)$
of the form $\tilde h(\rho)= A\rho(M/B -B\rho)$ for $A,B>0$
such that $\tilde h (\rho) \leq h(\rho)$ in $[0,M]$.
Hence $\Wmod_{\phi,\Leb{d}}(\mu_0,\mu_1)\leq\Wmod_{\phi_h,\Leb{d}}(\mu_0,\mu_1)
\leq \Wmod_{\phi_{\tilde h},\Leb{d}}(\mu_0,\mu_1)$.
Thanks to this observation, it is sufficient to prove the result under the assumption that
$M=1$, $h(\rho)=\rho(1-\rho)$ and $0\leq \rho_i(x)\leq 1$, for $i=0,1$.

Defining
\begin{equation}\label{def:tildemu}
    \tilde \mu_i=\tilde \rho_i \Leb{d} = 2\mathrm{Id}_\#\mu_i,
\end{equation}
where $\mathrm{Id}$ denotes the identity map in $\R^d$,
we prove that there exists a constant $C_{p,d}$, depending only on $p$ and $d$, such that
\begin{equation}\label{eq:df2}
    W_{\phi_h,\Leb{d}}(\mu_i,\tilde\mu_i)<C_{p,d}\mom_p(\mu_i) \quad \text{ for } i=0,1.
\end{equation}
Indeed, for $t\in[0,1]$, taking $T_t(x) := (1+t^p)x$ and $\mu_t:=(T_t)_\#\mu_i=\rho_t\Leb{d}$,
by \eqref{dpf} we have that $\rho_t(y)=\frac{1}{(1+t^p)^d}\,\rho_i\left(\frac{y}{1+t^p}\right)$.
Defining $\vv_t(x):=\dot T_t\circ T^{-1}_t(x)=\frac{(pt^{p-1})}{1+t^p}\,x,$
and $\ww_t=\vv_t\rho_t$, $\nnu_t=\ww_t\Leb{d}$ it is easy to check that
$(\mu_t,\nnu_t)_{t\in(0,1)}\in \CE(0,1;\mu_i \to \tilde\mu_i)$.
By elementary computations, using the definition of $\mu_t$ and $\vv_t$, we have
\begin{align*}
\int_0^1\int_{\R^d}\frac{|\ww_t(x)|^p}{(\rho_t(x)(1-\rho_t(x)))^{p-1}}\,dx\,dt
&= \int_0^1\int_{\R^d}\frac{|\vv_t(x)|^p\rho_t(x)}{(1-\rho_t(x))^{p-1}}\,dx\,dt \\
&= \int_0^1\int_{\R^d}\frac{|\vv_t(x)|^p}{(1-\rho_t(x))^{p-1}}\,d\mu_t(x)\,dt \\
&= \int_0^1\int_{\R^d}\frac{|\vv_t(T_t(x))|^p}{(1-\rho_t(T_t(x)))^{p-1}}\,d\mu_i(x)\,dt\\
&= \int_0^1\int_{\R^d}\frac{(pt^{p-1})^p|x|^p}{(1-(1+t^p)^{-d}\rho_i(x))^{p-1}}\,d\mu_i(x)\,dt.
\end{align*}
Since $\rho_i(x)\leq 1$ and $(1+t^p)^{d} \geq 1 +dt^p$ we have
\begin{equation*}
    \frac{1}{1-(1+t^p)^{-d}\rho_i(x)} \leq \frac{1}{1-(1+t^p)^{-d}} = \frac{(1+t^p)^{d}}{(1+t^p)^{d}-1} \leq \frac{(1+t^p)^{d}}{dt^p}.
\end{equation*}
Then
\begin{align*}
    \int_0^1\int_{\R^d}\frac{(pt^{p-1})^p|x|^p}{(1-(1+t^p)^{-d}\rho_i(x))^{p-1}}\,d\mu_i(x)\,dt
    &\leq \int_0^1\int_{\R^d}\frac{(pt^{p-1})^p(1+t^p)^{d(p-1)}}{(dt^p)^{p-1}}|x|^p\,d\mu_i(x)\,dt \\
    & \leq \mom_p(\mu_i) \int_0^1 p^p d^{1-p}(1+t^p)^{d(p-1)} \,dt,
\end{align*}
and \eqref{eq:df2} follows with $\displaystyle{C_{p,d}=\int_0^1 p^p d^{1-p}(1+t^p)^{d(p-1)} \,dt}$.

Finally, by the triangular inequality, we have
\begin{equation}\label{triang}
\Wmod_{\phi_h,\Leb{d}}(\mu_0,\mu_1) \leq  \Wmod_{\phi_h,\Leb{d}}(\mu_0,\tilde\mu_0)+ \Wmod_{\phi_h,\Leb{d}}(\tilde\mu_0,\tilde\mu_1)+ \Wmod_{\phi_h,\Leb{d}}(\tilde\mu_1,\mu_1).
\end{equation}
Since by \eqref{dpf} we have $\tilde \rho_i(x)= 2^{-d}\rho_i(x/2)\leq 2^{-d}$ and
$\mom_p(\tilde\mu_i)=2^p \mom_p(\mu_i)<+\infty$, by Proposition
\ref{prop:comparison} applied to $\tilde\mu_0,\tilde\mu_1$, and observing that
$W_p(\tilde\mu_0,\tilde\mu_1)\leq \mom_p(\tilde\mu_0)+\mom_p(\tilde\mu_1)$ (it
is a simple consequence of the definition \eqref{def:W}), by \eqref{eq:df2} and
\eqref{triang} we obtain \eqref{eq:df} .
\end{proof}

Given $M>0$ and $c>0$ we define the set of measures
$$\MM^+_{M,c}(\R^d):=\{\mu\in\MM^+(\R^d): \mu=\rho\Leb{d},\, 0\leq\rho\leq M, \, \mu(\R^d)=c,\, \mom_p(\mu)<+\infty\}.$$

\begin{theorem}\label{th:equivconv}
Let $p>1$ and $\phi\in\AdmissibleDensity (0,M)$.
If $(\mu_n)_{n\in\N}$ is a sequence in $\MM^+_{M,c}(\R^d)$ weakly-$*$ convergent to $\mu\in\MM^+_{M,c}(\R^d)$, such that
\begin{equation}\label{convmom}
\mom_p(\mu_n) \to \mom_p(\mu),
\end{equation}
then
$$\lim_{n\to+\infty}\Wmod_{\phi,\Leb{d}}(\mu_n,\mu)=0.$$
\end{theorem}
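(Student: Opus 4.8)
The plan is to interpose between $\mu_n$ and $\mu$ slightly dilated copies whose densities are bounded \emph{strictly} below $M$, so that the comparison with the Wasserstein distance of Proposition \ref{prop:comparison} becomes applicable, while keeping the cost of the dilation uniformly small. Here the main obstacle is precisely the degeneracy at the saturation value $\rho=M$: Proposition \ref{prop:comparison} requires $\rho_i\le M'<M$, whereas the measures of $\MM^+_{M,c}(\R^d)$ may attain $\rho=M$ on sets of positive measure, where the mobility vanishes and $\phi$ is $+\infty$ for $\ww\ne0$; thus $\Wmod_{\phi,\Leb{d}}$ cannot be bounded by $W_p$ directly. I would fix $\lambda>1$, set $S_\lambda(x)=\lambda x$, and define $\mu_n^\lambda:=(S_\lambda)_\#\mu_n$, $\mu^\lambda:=(S_\lambda)_\#\mu$; by \eqref{dpf} these are absolutely continuous with densities bounded by $\lambda^{-d}M=:M'<M$, with the same mass $c$ and finite $p$-moment. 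The triangle inequality gives
\begin{equation*}
\Wmod_{\phi,\Leb{d}}(\mu_n,\mu)\le \Wmod_{\phi,\Leb{d}}(\mu_n,\mu_n^\lambda)+\Wmod_{\phi,\Leb{d}}(\mu_n^\lambda,\mu^\lambda)+\Wmod_{\phi,\Leb{d}}(\mu^\lambda,\mu),
\end{equation*}
and the delicate point of the argument is the order of limits: one must first take $\lambda\to1$ to kill the two outer terms \emph{uniformly in} $n$, and only afterwards let $n\to\infty$ in the middle term, whose comparison constant blows up as $\lambda\to1$ but stays fixed once $\lambda$ is chosen.

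For the outer terms I would estimate the dilation cost explicitly. By Proposition \ref{prop:phile} and Proposition \ref{prop:monotonicity} one may replace $\phi$ by $\phi_h$, and, bounding $h$ from below by a concave quadratic $\rho\mapsto A\rho(M-\rho)$ exactly as in the proof of Theorem \ref{th:connectivityrd}, reduce to the mobility $h(\rho)=\rho(M-\rho)$. Choosing the dilation flow $T_t(x)=\sigma(t)x$ with $\sigma(t)=1+(\lambda-1)t^p$, so that $\sigma(0)=1$ and $\sigma(1)=\lambda$, and setting $\mu_t=(T_t)_\#\mu_n$, $\vv_t=\dot T_t\circ T_t^{-1}$, $\nnu_t=\vv_t\mu_t$, the same computation as in Theorem \ref{th:connectivityrd} (now using $\sigma^d-1\ge d(\lambda-1)t^p$) yields a bound of the form
\begin{equation*}
\int_0^1\!\!\int_{\R^d}\phi_h(\rho_t,\ww_t)\,dx\,dt\le C_{p,d,M}\,(\lambda-1)\,\mom_p(\mu_n).
\end{equation*}
Hence $\Wmod_{\phi,\Leb{d}}(\mu_n,\mu_n^\lambda)\le\big(C_{p,d,M}(\lambda-1)\mom_p(\mu_n)\big)^{1/p}$, and likewise for $\mu$. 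Since \eqref{convmom} makes $\sup_n\mom_p(\mu_n)$ finite, both outer terms are bounded by $C(\lambda-1)^{1/p}$ uniformly in $n$, hence can be made smaller than any prescribed $\eps$ by taking $\lambda$ sufficiently close to $1$.

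For the middle term, with $\lambda$ now fixed the densities of $\mu_n^\lambda,\mu^\lambda$ lie in $[0,M']$ with $M'<M$, so Proposition \ref{prop:comparison} applied on the convex open set $\Omega=\R^d$ gives
\begin{equation*}
\Wmod_{\phi,\Leb{d}}(\mu_n^\lambda,\mu^\lambda)\le C(M',\phi,p)\,W_p(\mu_n^\lambda,\mu^\lambda)=C(M',\phi,p)\,\lambda\,W_p(\mu_n,\mu),
\end{equation*}
using that a dilation scales $W_p$ by $\lambda$. It then remains to show $W_p(\mu_n,\mu)\to0$, which I would obtain (after normalizing the common mass $c$, the case $c=0$ being trivial) from the standard characterization of $W_p$-convergence of probability measures as narrow convergence together with convergence of $p$-moments, see \cite[Prop.\ 7.1.5]{AGS}. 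Indeed $\sup_n\mom_p(\mu_n)<\infty$ forces tightness, which upgrades the weak$^*$ convergence $\mu_n\weakto^*\mu$ to narrow convergence; and since the measures are absolutely continuous we have $\mom_p(\mu_n)=\int_{\R^d}(1\vee|x|)^p\,d\mu_n$ with $(1\vee|x|)^p$ continuous, so \eqref{convmom} together with narrow convergence (applied to the bounded continuous function $(1-|x|^p)^+$) yields $\int_{\R^d}|x|^p\,d\mu_n\to\int_{\R^d}|x|^p\,d\mu$. Thus $W_p(\mu_n,\mu)\to0$, and the middle term tends to $0$ as $n\to\infty$ for the fixed $\lambda$. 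Assembling the three estimates, for every $\eps>0$ one first selects $\lambda$ making the outer terms $<\eps$ uniformly in $n$, and then lets $n\to\infty$, obtaining $\limsup_{n}\Wmod_{\phi,\Leb{d}}(\mu_n,\mu)\le\eps$; as $\eps$ is arbitrary this gives the claim.
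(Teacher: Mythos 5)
Your argument is correct. It shares the overall architecture of the paper's proof --- interpose between $\mu_n$ and $\mu$ regularized copies whose densities stay strictly below the saturation level $M$, split by the triangle inequality, make the two outer terms small uniformly in $n$ through the regularization parameter, and control the middle term via Proposition \ref{prop:comparison} together with $W_p(\mu_n,\mu)\to0$ --- but the regularization mechanism and the estimate of the outer terms are genuinely different. The paper takes convex combinations $\mu_n^\lambda=(1-\lambda)\mu_n+\lambda\bar\mu$ with a fixed auxiliary measure $\bar\mu\in\MM^+_{M,c}(\R^d)$ whose density satisfies $\sup\bar\rho=M'<M$, so that $\rho_n^\lambda\le M-\lambda(M-M')$; the outer terms are then handled abstractly by the convexity of $\Wmod^p_{\phi,\Leb{d}}$ (inequality \eqref{eq:40} of Proposition \ref{prop:convexity}), namely $\Wmod^p_{\phi,\Leb{d}}(\mu_n,\mu_n^\lambda)\le\lambda\,\Wmod^p_{\phi,\Leb{d}}(\mu_n,\bar\mu)$, with Theorem \ref{th:connectivityrd} invoked only to guarantee $\sup_n\Wmod_{\phi,\Leb{d}}(\mu_n,\bar\mu)<+\infty$; convexity of $W_p^p$ is likewise used to keep the comparison in the middle term under control. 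You instead dilate, which pushes the density below $\lambda^{-d}M$ for free, and you re-run the explicit flow computation from the proof of Theorem \ref{th:connectivityrd} to get the quantitative dilation cost $C(\lambda-1)\mom_p(\mu_n)$. Your route is more self-contained and quantitative (it exhibits the rate $(\lambda-1)^{1/p}$ and does not need the convexity of the distance), at the price of redoing the computation; the paper's route is shorter because it reuses Theorem \ref{th:connectivityrd} and convexity as black boxes, and it transfers directly to the bounded-domain setting of Theorem \ref{th:equivconvOmega}, where a dilation would push mass outside $\Omega$ while a convex combination with a reference measure does not.
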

\begin{proof}
Let $\bar\mu=\bar\rho\Leb{d}\in\MM^+_{M,c}$ be a fixed auxiliary measure such that
$M':=\sup\bar\rho <M$.\\
For every $\lambda\in (0,1)$, we define the convex combinations
$\mu_n^\lambda := (1-\lambda)\mu_n+\lambda\bar\mu$
and $\mu^\lambda := (1-\lambda)\mu+\lambda\bar\mu$.
Denoting by $\rho_n^\lambda$ the density of $\mu_n^\lambda$ with respect to $\Leb{d}$ we have
$\rho_n^\lambda\leq 1-\lambda(M-M')$.
By Proposition \ref{prop:comparison} and the convexity of the $p$-power of the standard $p$-Wasserstein distance
(Proposition \ref{prop:convexity} applied to $\phi(\rho,\ww)=|\ww|^p/\rho^{p-1}$ or \cite{Vil03}) we have
\begin{equation}\label{ooo}
\Wmod_{\phi,\Leb{d}}^p(\mu_n^\lambda,\mu^\lambda) \leq C W_p^p(\mu_n^\lambda,\mu^\lambda)
\leq C(1-\lambda)W_p^p(\mu_n,\mu).
\end{equation}
By the convergence of the $p$-moments \eqref{convmom}
and the weak-$*$ convergence we have (see \cite{AGS} or \cite{Vil03})
\begin{equation}\label{o}
    \lim_{n\to+\infty}W_p(\mu_n,\mu)=0.
\end{equation}
Moreover for the convexity of $\Wmod_{\phi,\Leb{d}}^p$ (Proposition \ref{prop:convexity}) we have
\begin{equation}\label{oo}
    \Wmod_{\phi,\Leb{d}}^p(\mu_n,\mu_n^\lambda)\leq\lambda\Wmod_{\phi,\Leb{d}}^p(\mu_n,\bar\mu),\qquad
    \Wmod_{\phi,\Leb{d}}^p(\mu,\mu^\lambda)\leq\lambda\Wmod_{\phi,\Leb{d}}^p(\mu,\bar\mu).
\end{equation}
Since
\begin{equation}
     \Wmod_{\phi,\Leb{d}}(\mu_n,\mu) \leq  \Wmod_{\phi,\Leb{d}}(\mu_n,\mu_n^\lambda)+
     \Wmod_{\phi,\Leb{d}}(\mu_n^\lambda,\mu^\lambda)+ \Wmod_{\phi,\Leb{d}}(\mu^\lambda,\mu),
\end{equation}
by \eqref{ooo}, \eqref{o} and \eqref{oo}
we have
\begin{equation}\label{ab}
 \limsup_{n\to +\infty} \Wmod_{\phi,\Leb{d}}(\mu_n,\mu)
 \leq \lambda^{1/p} \Big(\sup_{n}\Wmod_{\phi,\Leb{d}}(\mu_n,\bar\mu) + \Wmod_{\phi,\Leb{d}}(\mu,\bar\mu)\Big).
\end{equation}
By \eqref{convmom} and Theorem \ref{th:connectivityrd} we obtain
\begin{equation}\label{boundwmod}
    \sup_n\Wmod_{\phi,\Leb{d}}(\mu_n,\bar\mu) < +\infty.
\end{equation}
Since $\lambda>0$ is arbitrary, \eqref{ab} and \eqref{boundwmod} imply
$$\limsup_{n\to +\infty} \Wmod_{\phi,\Leb{d}}(\mu_n,\mu)=0$$
and we conclude.
\end{proof}

We recall that the convergence with respect to the standard Wasserstein distance $W_p$ is equivalent to the
weak-$*$ convergence and the convergence of the $p$-moments $\mom_p$ (see \cite{Vil03} or \cite{AGS}).
Consequently, Theorem \ref{th:equivconv} states that the convergence with respect to $W_p$ in $\MM^+_{M,c}(\R^d)$
implies the convergence with respect to $\Wmod_{\phi,\Leb{d}}$ for every $\phi\in\AdmissibleDensity (0,M)$.
We observe that this property is not true in the case of $\phi\in\AdmissibleDensity (0,+\infty)$,
where only a result like Proposition \ref{prop:comparison} hold (see Theorem 5.24 of \cite{DNS}).

\subsection{The case of the reference measure $\chi_{\Omega} \Leb{d}$ with $\Omega$ bounded convex}
When the reference measure is $\gamma_\Omega:=\chi_{\Omega} \Leb{d}$,
where $\Omega$ is a bounded convex smooth  domain, we have the following
result of finiteness of the distance and of boundedness of the space of admissible measures.
\begin{theorem}\label{th:finOmega}
Let $\phi\in\AdmissibleDensitytwo (a,b)$
and $\gamma_\Omega:=\chi_{\Omega}\Leb{d}$ with  $\Omega\subset \R^d$ a bounded convex smooth domain.
For every $c\in(a\Leb{d}(\Omega),b\Leb{d}(\Omega))$ we define the set of measures
$$\MM_{(a,b),c}(\Omega) := \{\mu\in\MM(\overline\Omega):
\mu=\rho\gamma_{\Omega},\,\, a\leq\rho\leq b,\,\, \mu(\overline\Omega)=c \}.$$
The space $\MM_{(a,b),c}(\Omega)$ endowed with the distance $\Wmod_{\phi,\gamma_\Omega}$ is bounded.
In particular $\Wmod_{\phi,\gamma_\Omega}(\mu_0,\mu_1)<+\infty$ for every $\mu_0,\mu_1\in \MM_{(a,b),c}(\Omega)$.
\end{theorem}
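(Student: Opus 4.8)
The plan is to reduce the problem to an explicit model mobility, connect every admissible measure to the constant--density state by the linear heat semigroup, and exploit an entropy--dissipation identity that is available precisely because $p=2$.

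\medskip
\noindent\textbf{Reductions.} First I would bound the distance from above by one associated to an explicit mobility. By Proposition~\ref{prop:phile} there is a concave $h>0$ on $(a,b)$ with $\phi\le\phi_h$; since $h$ is concave its endpoint limits $h(a^+),h(b^-)$ are finite and nonnegative, so for $A>0$ small one can fit a downward parabola $\tilde h(\rho)=A(\rho-a)(b-\rho)\le h(\rho)$ on $[a,b]$ (as in the proof of Theorem~\ref{th:connectivityrd}), which crucially vanishes only \emph{linearly} at the endpoints. As $\tilde h\le h$ gives $\phi_h\le\phi_{\tilde h}$, Proposition~\ref{prop:monotonicity} yields $\Wmod_{\phi,\gamma_\Omega}\le\Wmod_{\phi_{\tilde h},\gamma_\Omega}$, so it suffices to bound the latter. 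Setting $\bar\rho:=c/\Leb{d}(\Omega)\in(a,b)$ and $\bar\mu:=\bar\rho\,\gamma_\Omega$, by the triangle inequality it is then enough to prove $\sup\{\Wmod_{\phi_{\tilde h},\gamma_\Omega}(\mu,\bar\mu):\mu\in\MM_{(a,b),c}(\Omega)\}<+\infty$.

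\medskip
\noindent\textbf{Heat flow and the dissipation identity.} Given $\mu=\rho_0\gamma_\Omega$, let $\rho_t$ solve the heat equation $\partial_t\rho_t=\Delta\rho_t$ on $\Omega$ with homogeneous Neumann boundary condition. The maximum principle keeps $a\le\rho_t\le b$, the mass $\mu_t(\Omega)=c$ is conserved, and $\rho_t\to\bar\rho$ as $t\to\infty$; the no--flux condition makes $(\mu_t,\nnu_t)$ with $\nnu_t:=-\nabla\rho_t\,\gamma_\Omega$ a distributional solution of the continuity equation on $\R^d\times(0,\infty)$ (the boundary term produced by integration by parts vanishes). Because $p=2$ the action density is $\phi_{\tilde h}(\rho_t,-\nabla\rho_t)=|\nabla\rho_t|^2/\tilde h(\rho_t)=F''(\rho_t)\,|\nabla\rho_t|^2$, where $F$ is the convex function with $F''=1/\tilde h$; the linear vanishing of $\tilde h$ makes $F$ bounded on $[a,b]$. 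Integrating by parts (Neumann kills the boundary term) gives $\Phi(\mu_t,\nnu_t|\gamma_\Omega)=-\tfrac{d}{dt}\int_\Omega F(\rho_t)\,dx$, hence for every $T$
\[
\int_0^T\Phi(\mu_t,\nnu_t|\gamma_\Omega)\,dt=\int_\Omega F(\rho_0)\,dx-\int_\Omega F(\rho_T)\,dx\le \Leb{d}(\Omega)\,\mathrm{osc}_{[a,b]}F,
\]
a bound uniform in $\rho_0$. The possible blow--up of $\Phi$ at $t=0$ is harmless: the identity is applied on $[\varepsilon,T]$ and one lets $\varepsilon\to0$, using the boundedness and continuity of $F$.

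\medskip
\noindent\textbf{Truncation and a non--degenerate finish.} Uniform smoothing estimates for the Neumann heat semigroup provide a time $T$, independent of $\rho_0\in[a,b]$, after which $\rho_T$ takes values in a compact subinterval $[a',b']\Subset(a,b)$ on which $\tilde h\ge m>0$. Reparametrizing the heat curve from $[0,T]$ to $[0,1]$ affinely (Lemma~\ref{lemma:gluing}) multiplies the action by $T$, so by \eqref{def:Wm} one gets $\Wmod_{\phi_{\tilde h},\gamma_\Omega}(\mu,\rho_T\gamma_\Omega)^2\le T\,\Leb{d}(\Omega)\,\mathrm{osc}_{[a,b]}F$. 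To join $\rho_T$ to $\bar\rho$ I would use the straight interpolation $\rho_s=(1-s)\rho_T+s\bar\rho\in[a',b']$ together with the stationary flux $\nnu_s=\nabla p\,\gamma_\Omega$, where $p$ solves the Neumann problem $\Delta p=\rho_T-\bar\rho$ (solvable since $\int_\Omega(\rho_T-\bar\rho)=c-c=0$); this pair lies in $\CE$, and as $\tilde h(\rho_s)\ge m$ its action is $\le m^{-1}\int_\Omega|\nabla p|^2\le C m^{-1}\|\rho_T-\bar\rho\|_{L^2}^2$, uniformly bounded. Combining the two legs through the triangle inequality bounds $\Wmod_{\phi_{\tilde h},\gamma_\Omega}(\mu,\bar\mu)$ uniformly, whence the diameter of $\MM_{(a,b),c}(\Omega)$ is finite.

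\medskip
\noindent\textbf{Main obstacle.} The essential difficulty is the degeneracy of the mobility at the endpoints $a,b$: a naive straight--line interpolation between $\mu_0$ and $\mu_1$ makes $\int_0^1\!\int_\Omega|\ww_t|^2/\tilde h(\rho_t)$ diverge (logarithmically) wherever a density equals $a$ or $b$, so mass cannot be driven through the degenerate region directly. Routing through the heat flow resolves this because the choice $\nnu_t=-\nabla\rho_t\,\gamma_\Omega$ turns the action into the Dirichlet--type dissipation of the entropy $\int_\Omega F$, which is finite and uniformly bounded exactly when $\tilde h$ vanishes no faster than linearly --- this is what the reduction to the parabola secures, and it is special to $p=2$. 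The remaining care is to make the truncation time $T$ and all constants uniform over $\mu\in\MM_{(a,b),c}(\Omega)$, which follows from the a priori bound $a\le\rho_0\le b$ and standard uniform estimates for the Neumann heat semigroup on the smooth bounded domain $\Omega$.
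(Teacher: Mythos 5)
Your proposal is correct and follows essentially the same route as the paper: connect $\mu_0$ to the constant state via the Neumann heat semigroup, control the action on $[0,T]$ by the dissipation of the entropy $\int_\Omega U(\rho)$ with $U''=1/h$ (bounded on $[a,b]$ because $h$ vanishes at worst linearly at the endpoints), choose $T$ uniformly via the $L^\infty$ decay of the heat flow so that $\rho_T$ is away from the degenerate values, and conclude by the triangle inequality. The only deviation is the final leg: the paper joins $S_T(\mu_0)$ to $\mu_\infty$ by shifting densities and invoking the comparison with the Wasserstein distance (Proposition \ref{prop:comparison}), whereas you use an affine interpolation with an elliptic (Neumann) flux and a Poincar\'e estimate --- both are valid since the mobility is bounded below there.
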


\begin{proof}
Defining $\mu_\infty:=\frac{c}{\Leb{d}(\Omega)}\gamma_\Omega$, we prove that
\begin{equation}
    \sup_{\mu_0\in \MM_{(a,b),c}(\Omega)} \Wmod_{\phi,\gamma_\Omega}(\mu_0,\mu_\infty)<+\infty.
\end{equation}

Let $h$ be given by Proposition \ref{prop:phile}.

For $\mu_0=\rho_0\gamma_\Omega \in \MM_{(a,b),c}(\Omega)$,
let $\rho:(0,+\infty)\times \Omega \to \R$ be the solution of Cauchy-Neumann problem for the heat equation
\begin{equation}\label{eq:CN}
\begin{cases}
    \partial_t\rho - \Delta \rho=0 & \text{in } (0,+\infty)\times\Omega \\
    \rho(0,\cdot)=\rho_0 & \text{in } \Omega \\
    \nabla \rho \cdot \nn = 0 & \text{on } (0,\infty)\times \partial \Omega.
\end{cases}
\end{equation}
We use the notation $\rho_t:=\rho(t,\cdot)$ and $S_t(\mu_0):=\rho_t\gamma_\Omega$.

Defining the convex function $U:(a,b)\to\R$ by
\begin{equation}\label{eq:DU}
    U''(r)=\frac{1}{h(r)}, \qquad U'((a+b)/2)=0,\quad U((a+b)/2)=0
\end{equation}
and the entropy functional
$$\UU(\rho)=\int_\Omega U(\rho(x))\,dx,$$
we have the following entropy dissipation inequality
\begin{equation}\label{eq:EI}
    \UU(\rho_T)-\UU(\rho_0) \leq - \int_0^T \int_\Omega\frac{|\nabla \rho_s|^2}{h(\rho_s)}\,dx\,ds.
\end{equation}
The inequality \eqref{eq:EI} can be obtained, in the case of smooth initial datum, with a simple computation  and,
in the general case, by a convolution approximation argument.

By Lemma \ref{lemma:dhe}, observing that in our case $\rho_\infty=\frac{c}{\Leb{d}(\Omega)}$, we can prove that there exists $T>0$, independent on $\mu_0$, such that
\begin{equation}\label{boundrho}
    \rho_t \leq \rho_\infty + \frac{b-\rho_\infty}{2}, \qquad \forall t\geq T.
\end{equation}
By the triangular inequality we have that
\begin{equation}\label{cc}
     \Wmod_{\phi,\gamma_\Omega}(\mu_0,\mu_\infty) \leq  \Wmod_{\phi,\gamma_\Omega}(\mu_0,S_T(\mu_0))
     +  \Wmod_{\phi,\gamma_\Omega}(S_T(\mu_0),\mu_\infty).
\end{equation}
Since $h$ is concave and $\Omega$ is bounded,
it is not difficult to see that $\UU$ is bounded in $\MM_{(a,b),c}(\Omega)$,
and recalling \eqref{philphih} we have
$\Wmod_{\phi,\gamma_\Omega}(\mu_0,S_T(\mu_0)) \leq \int_0^T \int_\Omega\frac{|\nabla \rho_s|^2}{h(\rho_s)}\,dx\,ds$,
consequently \eqref{eq:EI} implies that
\begin{equation}\label{ccc}
\sup_{\mu_0\in \MM_{(a,b),c}(\Omega)}\Wmod_{\phi,\gamma_\Omega}(\mu_0,S_T(\mu_0))<+\infty.
\end{equation}
Since
\begin{equation}\label{shift}
\Wmod_{\phi,\gamma_\Omega}(\mu,\nu)=\Wmod_{\tilde\phi,\gamma_\Omega}(\mu-a\gamma_\Omega,\nu-a\gamma_\Omega),
\qquad \text{where } \tilde\phi(r,\ww):=\phi(r+a,\ww),
\end{equation}
considering the new densities $\tilde\rho:=\rho-a$,
and using \eqref{boundrho},
by Proposition \ref{prop:comparison} we obtain
\begin{equation}\label{c}
\sup_{\mu_0\in \MM_{(a,b),c}(\Omega)}\Wmod_{\phi,\gamma_\Omega}(S_T(\mu_0),\mu_\infty)
\leq C \sup_{\mu_0\in \MM_{(a,b),c}(\Omega)}W_2(S_T(\mu_0)-a\gamma_\Omega,\mu_\infty-a\gamma_\Omega)<+\infty,
\end{equation}
because of the boundedness of the Wasserstein distance on the set of measures defined on the bounded convex set $\Omega$.
Finally, we conclude by \eqref{cc}, \eqref{ccc} and \eqref{c}.
\end{proof}

Also in this case, following the proof of Theorem \ref{th:equivconv},
and using the equality \eqref{shift}, Proposition \ref{prop:comparison} and Theorem \ref{th:finOmega},
we can prove the following Theorem.
\begin{theorem}\label{th:equivconvOmega}
Let $\phi\in\AdmissibleDensitytwo (a,b)$
and $\gamma_\Omega:=\chi_{\Omega}\Leb{d}$ with  $\Omega\subset \R^d$ a bounded convex smooth domain.
If $(\mu_n)_{n\in\N}$ is a sequence in $\MM_{(a,b),c}(\Omega)$
weakly-$*$ convergent to $\mu\in\MM_{(a,b),c}(\Omega)$, then
$$\lim_{n\to+\infty}\Wmod_{\phi,\gamma_\Omega}(\mu_n,\mu)=0.$$
\end{theorem}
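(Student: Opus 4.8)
The plan is to reproduce the scheme of the proof of Theorem \ref{th:equivconv} almost verbatim, making two substitutions dictated by the final remark before the statement: the lower endpoint $0$ is replaced by the general endpoint $a$ through the translation identity \eqref{shift}, and the moment-convergence hypothesis \eqref{convmom} (needed on $\R^d$ to pass from weak-$*$ to $W_2$ convergence) is now free of charge because $\Omega$ is bounded. First I would record the two inputs specific to this setting. Since every element of $\MM_{(a,b),c}(\Omega)$ is carried by the compact set $\overline\Omega$ and has the fixed mass $c$, weak-$*$ convergence $\mu_n\weakto^*\mu$ coincides with narrow convergence, and on a bounded domain narrow convergence of measures with equal mass is metrized by $W_2$; hence $W_2(\mu_n,\mu)\to 0$ with no moment assumption. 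Moreover, by Theorem \ref{th:finOmega} the whole space $\MM_{(a,b),c}(\Omega)$ is bounded for $\Wmod_{\phi,\gamma_\Omega}$, which supplies the uniform bound $\sup_n\Wmod_{\phi,\gamma_\Omega}(\mu_n,\mu_\infty)<+\infty$ needed at the end.

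Next I would fix the uniform auxiliary measure $\mu_\infty:=\frac{c}{\Leb{d}(\Omega)}\gamma_\Omega$, whose constant density lies strictly inside $(a,b)$ because $c\in(a\Leb{d}(\Omega),b\Leb{d}(\Omega))$, and form the convex combinations $\mu_n^\lambda:=(1-\lambda)\mu_n+\lambda\mu_\infty$ and $\mu^\lambda:=(1-\lambda)\mu+\lambda\mu_\infty$ for $\lambda\in(0,1)$. These again lie in $\MM_{(a,b),c}(\Omega)$, and their densities satisfy $\rho^\lambda\le b-\lambda\big(b-\tfrac{c}{\Leb{d}(\Omega)}\big)<b$, i.e. they are bounded strictly away from the upper endpoint. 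This regularization is exactly what makes the degenerate mobility comparable to the standard one. Applying the shift \eqref{shift} with $\tilde\phi(r,\ww):=\phi(r+a,\ww)\in\AdmissibleDensitytwo(0,b-a)$ and the translated densities $\tilde\rho:=\rho-a\in[0,M']$, where $M'<b-a$, Proposition \ref{prop:comparison} gives $\Wmod_{\phi,\gamma_\Omega}(\mu_n^\lambda,\mu^\lambda)=\Wmod_{\tilde\phi,\gamma_\Omega}(\mu_n^\lambda-a\gamma_\Omega,\mu^\lambda-a\gamma_\Omega)\le C\,W_2\big(\mu_n^\lambda-a\gamma_\Omega,\mu^\lambda-a\gamma_\Omega\big)$ for a constant $C$ depending on $\lambda$ through $M'$. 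Writing each shifted combination as a convex combination of $\mu_n-a\gamma_\Omega$ (resp. $\mu-a\gamma_\Omega$) and the common measure $\mu_\infty-a\gamma_\Omega$, the convexity of $W_2^2$ (Proposition \ref{prop:convexity} applied to the standard density $\psi(\rho,\ww)=|\ww|^2/\rho$) yields $W_2^2(\mu_n^\lambda-a\gamma_\Omega,\mu^\lambda-a\gamma_\Omega)\le(1-\lambda)W_2^2(\mu_n-a\gamma_\Omega,\mu-a\gamma_\Omega)$, and the right-hand side tends to $0$ by the first input. Thus, for each fixed $\lambda$, the middle term $\Wmod_{\phi,\gamma_\Omega}(\mu_n^\lambda,\mu^\lambda)\to 0$ as $n\to\infty$.

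Finally I would close exactly as in Theorem \ref{th:equivconv}. The convexity of $\Wmod_{\phi,\gamma_\Omega}^p$ (Proposition \ref{prop:convexity}) gives $\Wmod_{\phi,\gamma_\Omega}^p(\mu_n,\mu_n^\lambda)\le\lambda\,\Wmod_{\phi,\gamma_\Omega}^p(\mu_n,\mu_\infty)$ and $\Wmod_{\phi,\gamma_\Omega}^p(\mu,\mu^\lambda)\le\lambda\,\Wmod_{\phi,\gamma_\Omega}^p(\mu,\mu_\infty)$. Inserting these into the triangle inequality $\Wmod_{\phi,\gamma_\Omega}(\mu_n,\mu)\le\Wmod_{\phi,\gamma_\Omega}(\mu_n,\mu_n^\lambda)+\Wmod_{\phi,\gamma_\Omega}(\mu_n^\lambda,\mu^\lambda)+\Wmod_{\phi,\gamma_\Omega}(\mu^\lambda,\mu)$ and taking $\limsup_{n\to\infty}$, the middle term drops out and I am left with $\limsup_n\Wmod_{\phi,\gamma_\Omega}(\mu_n,\mu)\le\lambda^{1/p}\big(\sup_n\Wmod_{\phi,\gamma_\Omega}(\mu_n,\mu_\infty)+\Wmod_{\phi,\gamma_\Omega}(\mu,\mu_\infty)\big)$. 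Theorem \ref{th:finOmega} makes the bracket finite, and since $\lambda\in(0,1)$ is arbitrary I let $\lambda\to 0$ to conclude.

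I expect the main obstacle to be the degeneracy of the mobility at the endpoints of $(a,b)$: Proposition \ref{prop:comparison} applies only when the density stays strictly below the upper endpoint, so a direct comparison of $\Wmod_{\phi,\gamma_\Omega}(\mu_n,\mu)$ with $W_2(\mu_n,\mu)$ is unavailable when $\mu_n$ or $\mu$ saturate the constraint $\rho=b$. The convex-combination device with $\mu_\infty$ is precisely what circumvents this, at the price of the factor $\lambda^{1/p}$, which is then absorbed by the uniform boundedness of Theorem \ref{th:finOmega}. The only other point needing care is the passage from weak-$*$ to $W_2$ convergence; this is legitimate here solely because $\Omega$ is bounded, which is exactly where the hypothesis on $\Omega$ replaces the moment assumption used in Theorem \ref{th:equivconv}.
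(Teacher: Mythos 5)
Your proof is correct and follows exactly the route the paper indicates for this theorem, namely repeating the scheme of Theorem \ref{th:equivconv} with the shift identity \eqref{shift}, Proposition \ref{prop:comparison}, and the uniform bound of Theorem \ref{th:finOmega} replacing the moment hypothesis. The only cosmetic point is that when $a<0$ the measures are signed, so the statement ``$W_2(\mu_n,\mu)\to 0$'' in your first paragraph should be read for the shifted non-negative measures $\mu_n-a\gamma_\Omega$, $\mu-a\gamma_\Omega$ --- which is precisely what you use in the actual estimate, so nothing is missing.
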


We recall that the space of non-negative measures with fixed mass $c>0$, supported on a bounded convex open set,
is bounded with respect to the standard Wasserstein distance (easy consequence of the definition),
and the convergence with respect to the standard Wasserstein distance is equivalent to the weak$^*$ one on this set.
Theorems \ref{th:finOmega} and \ref{th:equivconvOmega} state that the analogous properties hold
for the space $\MM_{(a,b),c}(\Omega)$ endowed with the distance $\Wmod_{\phi,\gamma_\Omega}$.

\subsubsection{Appendix: decay for heat equation}
In this appendix we recall a standard result on the asymptotic behavior of the heat equation.
Since it seems not simple to find it in this form, we also give a proof.
\begin{lemma}\label{lemma:dhe}
Let $\Omega$ be a convex smooth domain of $\R^d$.
If $\rho_0:\Omega \to [a,b]$, and $\rho:(0,+\infty)\times\Omega\to \R$ denotes the solution of the problem
\begin{equation}\label{eq:CN2}
\begin{cases}
    \partial_t\rho - \Delta \rho=0 & \text{in } (0,+\infty)\times\Omega \\
    \rho(0,\cdot)=\rho_0 & \text{in } \Omega \\
    \nabla \rho \cdot \nn = 0 & \text{on } (0,\infty)\times \partial \Omega ,
\end{cases}
\end{equation}
then there exist two constants
$C>$ and $\lambda>0$, depending only on $a$, $b$ and $\Omega$ such that
\begin{equation}\label{expdecay}
    || \rho_s - \rho_\infty ||_{L^\infty(\Omega)} \leq C e^{-\lambda s}, \qquad \forall\,s\geq 0 ,
\end{equation}
where $\rho_s:=\rho(s,\cdot)$ and $\rho_\infty:=\frac{1}{\Leb{d}(\Omega)}\int_\Omega \rho_0(x)\,dx$.
\end{lemma}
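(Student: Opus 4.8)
The plan is to reduce to a zero-mean solution and then combine an $L^2$ spectral-gap estimate with the smoothing ($L^2\to L^\infty$) property of the Neumann heat semigroup. First I would set $u_s:=\rho_s-\rho_\infty$. Integrating \eqref{eq:CN2} over $\Omega$ and using the boundary condition gives
\[
\frac{d}{ds}\int_\Omega \rho_s\,dx=\int_\Omega\Delta\rho_s\,dx=\int_{\partial\Omega}\nabla\rho_s\cdot\nn\,d\mathscr H^{d-1}=0,
\]
so the mean is conserved and $\int_\Omega u_s\,dx=0$ for every $s\ge0$; moreover $u$ itself solves the Neumann heat equation, since constants are stationary for \eqref{eq:CN2}. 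The comparison (maximum) principle, applied with the constants $a$ and $b$ as sub- and supersolutions, gives $a\le\rho_s\le b$ for all $s$, hence the uniform bound $\|u_s\|_{L^\infty(\Omega)}\le b-a$. This already yields \eqref{expdecay} on the bounded interval $0\le s\le 1$ (since $e^{-\lambda s}\ge e^{-\lambda}$ there for any $\lambda>0$), and it provides the starting estimate $\|u_0\|_{L^2(\Omega)}\le (b-a)\,\Leb{d}(\Omega)^{1/2}$.

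Next I would establish exponential decay in $L^2$. Multiplying the equation for $u$ by $u_s$, integrating by parts and using the Neumann condition gives the energy identity
\[
\frac{1}{2}\frac{d}{ds}\|u_s\|_{L^2(\Omega)}^2=-\int_\Omega|\nabla u_s|^2\,dx.
\]
Because $u_s$ has zero mean, the Poincar\'e--Wirtinger inequality on $\Omega$ supplies a constant $\lambda>0$, namely the first nonzero Neumann eigenvalue of $-\Delta$ on $\Omega$ (depending only on $\Omega$), such that $\int_\Omega|\nabla u_s|^2\,dx\ge \lambda\,\|u_s\|_{L^2(\Omega)}^2$. Substituting and applying Gr\"onwall's lemma yields $\|u_s\|_{L^2(\Omega)}\le e^{-\lambda s}\|u_0\|_{L^2(\Omega)}$ for all $s\ge0$.

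Finally I would upgrade $L^2$ decay to $L^\infty$ decay for $s\ge1$ via the smoothing of the semigroup. Writing $S_t$ for the solution operator of \eqref{eq:CN2}, the Neumann heat semigroup on the smooth bounded domain $\Omega$ is ultracontractive, so $S_1:L^2(\Omega)\to L^\infty(\Omega)$ is bounded with norm $C_1=C_1(\Omega)$ (equivalently, this is standard parabolic regularity). Hence, using $u_s=S_1u_{s-1}$, for $s\ge1$,
\[
\|u_s\|_{L^\infty(\Omega)}\le C_1\|u_{s-1}\|_{L^2(\Omega)}\le C_1 e^{-\lambda(s-1)}\|u_0\|_{L^2(\Omega)}\le C_1 e^{\lambda}(b-a)\,\Leb{d}(\Omega)^{1/2}\,e^{-\lambda s}.
\]
Combined with the bound on $0\le s\le1$ from the first step, this gives \eqref{expdecay} with $\lambda$ as above and $C$ the maximum of the two resulting constants, all depending only on $a$, $b$ and $\Omega$. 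The step requiring the most care is the $L^2\to L^\infty$ smoothing estimate (ultracontractivity of the Neumann heat semigroup); I note that convexity of $\Omega$ is not used in this lemma, only its smoothness and boundedness.
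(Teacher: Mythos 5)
Your argument is correct, and the reduction ($L^2$ energy identity plus Poincar\'e--Wirtinger giving $\|\rho_s-\rho_\infty\|_{L^2}\le e^{-\lambda s}\|\rho_0-\rho_\infty\|_{L^2}$, uniform $L^\infty$ bound handling $s\in[0,1]$) coincides with the first half of the paper's proof. Where you genuinely diverge is in upgrading $L^2$ decay to $L^\infty$ decay. You invoke ultracontractivity of the Neumann heat semigroup, i.e.\ boundedness of $S_1:L^2(\Omega)\to L^\infty(\Omega)$, and write $u_s=S_1u_{s-1}$ for $s\ge 1$. The paper instead uses the interpolation inequality $\|u\|_{L^\infty(\Omega)}\le C\|u\|_{L^2(\Omega)}^{2/(d+2)}\|u\|_{W^{1,\infty}(\Omega)}^{d/(d+2)}$ together with a Bernstein-type uniform gradient bound $\sqrt{t}\,\|\nabla\rho_t\|_{L^\infty(\Omega)}\le\max(|a|,|b|)$, obtained by applying the maximum principle to $v(t,x)=\rho_t^2(x)+t|\nabla\rho_t(x)|^2$; it is precisely at this step that the convexity of $\Omega$ enters, since it guarantees $\nabla|\nabla\rho|^2\cdot\nn\le 0$ on $\partial\Omega$ and hence that $v$ satisfies the Neumann sign condition needed for the comparison. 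Your observation that convexity is not needed for the lemma is therefore accurate for \emph{your} route (smoothness and boundedness suffice for the $L^2\to L^\infty$ smoothing), and as a bonus you retain the full spectral-gap rate $\lambda=C_P$, whereas the interpolation route degrades it to $2C_P/(d+2)$. The trade-off is that you rely on a less elementary black box (boundary-regular parabolic smoothing or Nash-type estimates for the Neumann Laplacian), while the paper's argument is self-contained modulo the maximum principle and the stated interpolation inequality; in the context of the paper the convexity of $\Omega$ is assumed anyway for Theorem \ref{th:finOmega}, so nothing is lost either way.
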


\begin{proof}
Since  $\partial_t(\rho_t-\rho_\infty) - \Delta (\rho_t-\rho_\infty)=0$
with homogeneous Neumann boundary conditions,
multiplying this equation by $\rho_t-\rho_\infty$ and integrating by parts we obtain the identity
\begin{equation}\label{eq:EI2}
    \frac{d}{dt}||\rho_t-\rho_\infty||_{L^2(\Omega)}^2 + 2||\nabla \rho_t||_{L^2(\Omega)}^2=0.
\end{equation}
By Poincar\'e's inequality, there exists a constant $C_P$ depending only on $\Omega$ such that
\begin{equation}
    ||\nabla \rho_t||_{L^2(\Omega)}^2 \geq C_P ||\rho_t-\rho_\infty||_{L^2(\Omega)}^2,
\end{equation}
and from \eqref{eq:EI2} we immediately obtain the $L^2(\Omega)$ exponential decay
\begin{equation}\label{L2expdecay}
    || \rho_t - \rho_\infty ||_{L^2(\Omega)} \leq e^{-C_P t} || \rho_0 - \rho_\infty ||_{L^2(\Omega)},
    \qquad \forall\,t\geq 0 .
\end{equation}
The $L^2(\Omega)-W^{1,\infty}(\Omega)$ interpolation inequality
(see for instance \cite[Complements of Chapter IX]{Brezis} or \cite{Nirenberg}),
states that there exist a constant $C$ depending only on $\Omega$ such that
\begin{equation}\label{interp}
    || \rho_t - \rho_\infty ||_{L^\infty(\Omega)} \leq
    C || \rho_t - \rho_\infty ||_{L^2(\Omega)}^{2/(d+2)} || \rho_t - \rho_\infty ||_{W^{1,\infty}(\Omega)}^{d/(d+2)}
    \qquad \forall\,t\geq 0 .
\end{equation}
In order to get a uniform bound of the $L^\infty$ norm of the gradient, we define
$v(t,x):=\rho^2_t(x)+t|\nabla \rho_t(x)|^2$, which solves the problem
\begin{equation}\label{ineqproblem}
\begin{cases}
    \partial_t v - \Delta v \leq 0 & \text{in } (0,+\infty)\times\Omega \\
    v(0,\cdot)=\rho_0^2 & \text{in } \Omega \\
    \nabla v \cdot \nn \leq 0 & \text{on } (0,\infty)\times \partial \Omega .
\end{cases}
\end{equation}
Indeed, by a simple computation we have that $v$ satisfies the partial
differential inequality in \eqref{ineqproblem}.
In order to obtain the boundary
condition satisfied by $v$ we have $\nabla v \cdot \nn = \nabla \rho^2 \cdot \nn + t
\nabla |\nabla \rho|^2 \cdot \nn = t \nabla |\nabla \rho|^2 \cdot \nn$ because
of the boundary condition in \eqref{eq:CN2}. Moreover, by the smoothness and
the convexity of $\Omega$, we have that $\nabla |\nabla \rho|^2 \cdot \nn \leq
0$ (see for instance \cite[Lemma 5.1]{GST}).

The maximum principle for problem \eqref{ineqproblem} (see for instance \cite{Friedman}) states that
$v(t,x) \leq  ||\rho_0^2||_{L^\infty(\Omega)}$.
In particular we have
\begin{equation}\label{gradientdecay}
 \sqrt{t} ||\nabla \rho_t||_{L^\infty(\Omega)} \leq ||\rho_0||_{L^\infty(\Omega)} \leq \max (|a|,|b|).
\end{equation}
The inequality \eqref{expdecay} follows from \eqref{interp} and \eqref{gradientdecay} (for $t\geq 1$ for instance) and \eqref{L2expdecay},
recalling that $||\rho_t-\rho_\infty||_{L^\infty(\Omega)} \leq 2 \max (|a|,|b|)$.
\end{proof}

\bigskip

\noindent {\bf Acknowledgements:} The authors would like to thank Giuseppe Savar\'e 
for useful suggestions on this work.

%

\end{document}